\documentclass[11pt, a4paper]{amsart}

\usepackage{amsmath}
\newtheorem{theorem}{Theorem}[section]
\newtheorem{definition}{Definition}[section]
\newtheorem{lemma}{Lemma}[section]
\newtheorem{proposition}{Proposition}[section]
\newtheorem{corollary}{Corollary}[section]

\usepackage[colorlinks=true,citecolor=cyan,backref=page]{hyperref}

\DeclareMathOperator{\N}{\mathbb N}

\DeclareMathOperator{\id}{id}
\DeclareMathOperator{\MOD}{MOD}

\usepackage{amsmath}
\usepackage{amsfonts}
\usepackage{amssymb}
\usepackage{stmaryrd} 
\usepackage{amssymb} 
\usepackage{mathrsfs} 
\usepackage{amsfonts} 
\usepackage{esint} 
\usepackage{array} 
\usepackage{latexsym}
\usepackage{lmodern} 
\usepackage{graphicx} 
\usepackage{graphics} 
\usepackage{epsfig} 
\usepackage{color} 
\usepackage[dvipsnames]{xcolor}
\usepackage[all]{xy}
\usepackage{textcomp}
\usepackage{stmaryrd}
\DeclareGraphicsExtensions{.eps,.pdf,.jpeg,.png} 
\usepackage{fancyhdr}
\usepackage{multirow} 
\usepackage{comment}
\usepackage{pgf,tikz}
\usepackage{paralist}
\usepackage{faktor}
\usepackage{hyperref}
\usepackage{url}
\usepackage{hyperref}
\usepackage{blkarray,bigstrut}
\setcounter{tocdepth}{4}
\usepackage{enumerate}

\title{An order on  circular permutations}

\author{Antoine Abram, Nathan Chapelier-Laget, Christophe Reutenauer}
\address{
D\'epartement de math\'ematiques, Universit\'e du Qu\'ebec \`a Montr\'eal}

\email{abram.antoine@courrier.uqam.ca}
\email{nathan.chapelier@gmail.com}
\email{Reutenauer.Christophe@uqam.ca}

\sloppy
 
\begin{document}

\maketitle

\tableofcontents

\begin{abstract} Motivation coming from the study of affine Weyl groups, a structure of ranked poset is defined on the set of circular permutations in $S_n$ (that is, $n$-cycles). It is isomorphic to the poset of so-called admitted vectors, and to an interval in the affine symmetric group $\tilde S_n$ with the weak order. The poset is a semidistributive lattice, and the rank function, whose range is cubic in $n$, is computed by some special formula involving inversions.
We prove also some links with Eulerian numbers, triangulations of an $n$-gon, and Young's lattice.
\end{abstract}

\section{Introduction}

In \cite{C}, the second-named author defines for each affine Weyl group an affine variety whose integral points are in bijection with the group. The irreducible components of the variety are affine subspaces and are in bijection with the alcoves in a certain polytope; this set of alcoves is partially ordered: the covering relation of this order is determined by applying a reflection sending an alcove to a neighbouring one, further from the origin.

In the case of the affine symmetric group $\tilde S_n$, it turns out that  the irreducible components are naturally in 
bijection with the set circular permutations in $S_n$ (that is, $n$-cycles), so that one obtains an order on this set. It is 
this poset, with its three instances, that we study in the present article.

The article is self-contained and may be read independently of the motivating article \cite{C}. We begin by defining a partial order on the set of circular permutations in $S_n$, obtaining a ranked poset (Corollary \ref{order1}), whose rank function has image the interval $\{0,1,\ldots,\binom{n}{3}\}$. Note the rather unusual maximum rank, which is {\em cubic} in $n$. The rank function is computed using inversions in a rather subtle way: it is a signed count of inversions (see Eq.(\ref{N})), and it is clearly not the usual length function in the symmetric group). 

The edges of the Hasse diagram of this order are indexed by transpositions $(i,j)$ with $i+1<j$; the covering relation conjugates two circular permutations by this transposition, under the condition that the smaller permutation sends $j$ onto $i$ (we call this large circular descent, see Definition \ref{large circular permutation}). The smallest element in the poset is $(1,2,\ldots,n)
$ and the largest one is its inverse. Inversion and conjugation by the longest element in $S_n$ are anti-automorphisms of the poset. See Figures \ref{P4} and \ref{P5}.

{\em Admitted vectors}, introduced by the second-named author in \cite{C}, are vectors $v$ of natural integers, indexed by the transpositions considered above, which satisfy the conditions
$$
\left\{
\begin{array}{ll}
v_{ij}+v_{jk} \leq v_{ik} \leq v_{ij}+v_{jk}+1~~\text{~~for all}~i<j<k, \\
v_{i,i+1}=0~~\text{~~for all}~1\leq i<n.
\end{array}
\right.
$$

Admitted vectors are naturally ordered, and we show that the poset of admitted vectors is isomorphic with the previous poset of circular permutations (Theorem \ref{iso}). 

Using the isomorphism of posets, we derive several properties. The poset is a lattice, and we give an algorithmic 
construction for the supremum and the infimum of two elements, using the vector incarnation of the poset (Theorem \ref{lattice}). The number 
of edges in the Hasse diagram are counted by Eulerian numbers: precisely, the number of circular permutations in 
$S_{n+1}$ which is covered by $k$ circular permutations is the Eulerian number $a(n,k)$ (= number of permutations in 
$S_n$ having $k$ descents) (Theorem \ref{coverk}); in particular, the number of join-irreducible elements is $2^n-n-1$; the poset is however not isomorphic to $S_n$ with the weak order: indeed the maximum rank is cubic in $n$, whereas the maximum rank for the weak order is  quadratic. We consider also the limit poset when $n\to\infty$: that is, we show that for any $k$, the 
posets coincide for $n$ large enough in their elements of rank $\leq k$; and the limit of the posets is Young's lattice of 
partitions (Theorem \ref{limit}). 

We give a link with triangulations of an $n$-gon. We consider the functions $\delta_{ijk}$, derived from the above defining identities of admitted vectors, that is,
$$
\delta_{ijk}(v)=v_{ik}-v_{ij}-v_{jk}, i<j<k.
$$
These functions take their values in $\{0,1\}$, and satisfy the Ptolemy-like relation
$$
\delta_{ijk}+\delta_{ikl}=\delta_{ijl}+\delta_{jkl}, i<j<k<l.
$$
We show that, given any triangulation of the $n$-gon, the component indexed by $(1,n)$ of any admitted vector $v$ is equal to the sum of all $\delta_{ijk}(v)$, where the sum is over all triangles $i,j,k$ of the triangulation (Theorem \ref{triang}).

The third incarnation of the poset, close to the initial motivation \cite{C}, is an interval in the affine symmetric group $\tilde S_n$. We construct a special element $f_c$ in this group, and show that our poset is isomorphic with the interval $[\id,f_c]$ with the left weak order (Theorem \ref{interval}). The rank function is therefore the length function in this Coxeter group. 

From this isomorphism we derive several consequences: by a result of Reading and Speyer, the lattice is semidistributive (but not distributive, nor modular) (Corollary \ref{semi-distrib}); and by a result of Bj\"orner and Brenti, the M\"obius function takes values in $\{-1,0,1\}$ (see end of Subsection \ref{--interval}).

Finally, in Section \ref{lines}, we illustrate the bijection between circular permutations and admitted vectors using lines diagrams.

\section{Ordering circular permutations}\label{circular}

A conjugate of word $w$ is a word $w'$ such that for some words $x,y$, one has $w=xy,w'=yx$. 
A {\em factor} of a word $w$ is a word $u$ such that $w=xuy$ for some words $x,y$; a {\em circular factor} of $w$ is a factor of some conjugate of $w$. We say that the word $sr$ of length 2 is a {\em subword} of $w$ if $w=xsyrz$ for some words $x,y,z$.

For any permutation $w\in S_n$, that we view as a word on the alphabet $\{1,\ldots,n\}$, and for any $i,j$ with $1\leq i<j\leq n$, define the function $\gamma_{ij}(w)=1$ if $j$ appears before $i$ in $w$, and 0 otherwise; that is, $\gamma_{ij}(w)=1$ if $ji$ is an {\em inversion by value} in $w$ (which means that $ji$ is a subword of the word $w$), and $=0$ if not. Then we define
\begin{equation}\label{N}
N(w)=\sum_{1\leq k\leq n-1}k(n-k)\gamma_{k,k+1}(w)-\sum_{1\leq i<j\leq n}\gamma_{ij}(w).
\end{equation}

\begin{proposition}\label{Nprop} (i) If $w,w'\in S_n$ are conjugate words, then $N(w)=N(w')$.

(ii) Suppose that $w$ has the circular factor $sr$, $s,r\in\{1,2,\ldots,n\}$, with $s>r+1$ and let $w'$ be obtained from $w$ by exchanging $r$ and $s$. Then $N(w')=N(w)+1$.

(iii) $N(12\ldots n)=0$.

(iv) $N(n\ldots 21)=\binom{n}{3}$. 

\end{proposition}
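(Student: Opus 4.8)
The plan is to handle the four parts in order of increasing subtlety, disposing of (iii) and (iv) by direct evaluation and then proving the two local-move statements (i) and (ii), which form the heart of the proposition. For (iii), the letters of $w=12\ldots n$ appear in increasing order, so $\gamma_{ij}(w)=0$ for every pair $i<j$; both sums in \eqref{N} vanish and $N(w)=0$. For (iv), every pair of $w=n\ldots 21$ is inverted, so $\gamma_{ij}(w)=1$ for all $i<j$. The first sum becomes $\sum_{k=1}^{n-1}k(n-k)$, which I would evaluate to $\frac{(n-1)n(n+1)}{6}=\binom{n+1}{3}$, while the second sum is $\binom{n}{2}$; the identity $\binom{n+1}{3}-\binom{n}{2}=\binom{n}{3}$ then yields the claim.

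For (i), I would reduce conjugation to its generator, the single cyclic shift sending $w=m\,a_2\cdots a_n$ to $w'=a_2\cdots a_n\,m$, since every conjugation $xy\mapsto yx$ is obtained by moving the first letter to the back $|x|$ times in succession. Moving the letter $m$ from front to back alters only the relative order of $m$ with every other letter, leaving all remaining pairs fixed, so I would track the two sums separately. In the second sum $\sum_{i<j}\gamma_{ij}$, the $m-1$ pairs $(i,m)$ with $i<m$ each drop from $1$ to $0$, and the $n-m$ pairs $(m,j)$ with $j>m$ each rise from $0$ to $1$, so this sum changes by $(n-m)-(m-1)=n-2m+1$. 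In the first sum only the consecutive pairs $(m-1,m)$ and $(m,m+1)$ are affected, contributing $-(m-1)(n-m+1)+m(n-m)$; a short expansion shows this also equals $n-2m+1$. The two changes cancel in $N=\sum_k k(n-k)\gamma_{k,k+1}-\sum_{i<j}\gamma_{ij}$, giving $N(w')=N(w)$.

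For (ii), I would first invoke (i) to rotate $w$ so that the circular factor $sr$ becomes a genuine linear factor; exchanging the adjacent letters $s$ and $r$ then changes the relative order of that single pair alone. Writing the pair as $(i,j)=(r,s)$, which is legitimate since $r<s$, the value $\gamma_{rs}$ drops from $1$ (in $\cdots sr\cdots$) to $0$ (in $\cdots rs\cdots$), so the second sum decreases by $1$. The hypothesis $s>r+1$ is precisely what guarantees that $(r,s)$ is \emph{not} a consecutive pair, so the first sum is untouched; hence $N(w')-N(w)=0-(-1)=+1$.

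The main obstacle is the cancellation in (i): it hinges on the specific weights $k(n-k)$, and indeed the whole point of the definition \eqref{N} is to make the two sums vary in lockstep under a cyclic shift. The only genuinely computational step is verifying $m(n-m)-(m-1)(n-m+1)=n-2m+1$, including the boundary cases $m=1$ and $m=n$ where one of the two consecutive pairs is absent and the corresponding weight $(m-1)(n-m+1)$ or $m(n-m)$ drops out.
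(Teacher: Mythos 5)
Your proof is correct and follows essentially the same route as the paper's: reduce conjugation to a single cyclic shift, track the two sums in \eqref{N} separately and observe they both change by $n-2m+1$, then use (i) to linearize the circular factor in (ii) so that only the non-consecutive pair $(r,s)$ changes. The only cosmetic difference is in (iv), where you invoke Pascal's identity $\binom{n+1}{3}-\binom{n}{2}=\binom{n}{3}$ rather than expanding the polynomial sums directly.
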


\begin{proof} (i) It suffices to prove this in the case where $w=ru$, $w'=ur$, $r\in \{1,\ldots,n\}$. Then by inspection of the inversions of $w$ and $w'$, we see that: for any $i<r$, $w$ has the inversion $ri$, but $w'$ not; for any $j>r$, $w'$ has the inversion $jr$, but $w$ not; all other inversion are identical in $w$ and $w'$. Finally we synthesize this for all $1 \leq i < r < j \leq n$ and $k,l \neq r$ by:
$$
 \left\{
\begin{array}{rl}
  \gamma_{rj}(w) =0 ~~\text{and} ~~\gamma_{ir}(w)=1\\
  \gamma_{rj}(w') =1 ~~\text{and}~~ \gamma_{ir}(w')=0\\
\gamma_{kl}(w) = \gamma_{kl}(w').
\end{array}
\right. \\
$$

Thus it follows that
\begin{align*}
N(w)-N(w')  = & ~(r-1)(n-(r-1))\gamma_{r-1,r}(w) - \sum\limits_{i=1}^{r-1}\gamma_{i,r}(w) ~ - \\
&  ~r(n-r)\gamma_{r,r+1}(w')+ \sum\limits_{j=r+1}^n\gamma_{rj}(w')  \\
= & ~r(n-r)-(r-1)(n-r+1)-(n-r)+(r-1) \\
 = & ~rn-r^2-rn+r^2-r+n-r+1-n+r+r-1 \\
 = &~ 0.
\end{align*}

(ii) By (i), we may assume that $sr$ is a factor of $w$. Since $s>r+1$, $k(k+1)$ (resp. $(k+1)k)$ is simultaneously a subword of $w$ and $w'$, or not; moreover, there is one subword of the form $ji$, $j>i$, less in $w'$ than in $w$; hence $N(w')=N(w)+1$.

(iii) This is straightforward.

(iv) Recall that the sum of the integers from 1 to $n-1$ (resp. of their squares) is equal to $(n-1)n/2$ (resp. $(n-1)n(2n-1)/6$). Since ${\gamma_{ij}(n...21) = 1}$ for all $i <j$, it follows that 
\begin{align*}
N(n...21) & = \sum\limits_{k=1}^{n-1}k(n-k) - \sum\limits_{1 \leq i <j \leq n}1  =  n\sum\limits_{k=1}^{n-1}k  -  \sum\limits_{k=1}^{n-1}k^2 - \frac{n(n-1)}{2} \\ 
                & = \frac{n^2(n-1)}{2}  - \frac{n(n-1)(2n-1)}{6} - \frac{n(n-1)}{2} \\
                & = \frac{n(n-1)(n-2)}{6}.
\end{align*}
\end{proof}

In this article we call {\em circular permutation} an $n$-cycle in $S_n$. If $w\in S_n$, viewed as word, denote by $(w)$ the circular permutation in $S_n$, written in cyclic form. Note that in the article, {\em we use the cycle form notation either as usually as a sequence of digits separated by commas, or as word without putting commas}, and in both cases, surrounded by parenthesis; for example, $(12345)$ and $(1,2,3,4,5)$ both represent the canonical $5$-cycle. It is well-known that a circular permutation in $S_n$ has exactly $n$ such representations, which are the $(w')$, $w'$ conjugate of $w$. In particular, there is exactly one representation beginning by 1, that is $(w')$ with $w'=1v$.

\begin{corollary} The function $N$ induces a function on the set of circular permutations in $S_n$, that we denote also by $N$. We synthesize this with the following commutative diagram:
\end{corollary}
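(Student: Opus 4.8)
The plan is to observe that this statement is an immediate consequence of Proposition~\ref{Nprop}(i). The key is to identify the fibers of the natural surjection $\pi$ from $S_n$ (with permutations viewed as words) onto the set of $n$-cycles, the map sending a word $w$ to the cyclic permutation $(w)$.

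First I would recall the fact already stated in the text: a given $n$-cycle admits exactly $n$ word-representations $(w')$, namely those with $w'$ ranging over the conjugate words of $w$ (its $n$ cyclic rotations). Consequently, for two words $w,w'\in S_n$ one has $(w)=(w')$ if and only if $w$ and $w'$ are conjugate words in the sense defined above. In other words, the fibers of $\pi$ are precisely the classes of words under cyclic conjugation.

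Next, Proposition~\ref{Nprop}(i) asserts exactly that $N$ is constant on each such fiber: if $w,w'$ are conjugate then $N(w)=N(w')$. By the universal property of the quotient $\pi$, the function $N$ therefore factors uniquely through $\pi$, yielding a well-defined function $\bar N$ on the set of $n$-cycles with $\bar N\circ\pi = N$; this factorization is exactly the commutativity of the diagram. We then denote $\bar N$ again by $N$, which is legitimate since it agrees with the original $N$ on every word representation of a circular permutation.

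There is essentially no obstacle here, as all the substance is already contained in Proposition~\ref{Nprop}(i). The only additional ingredient is the elementary combinatorial fact, recalled in the text, that the word-representations of a fixed $n$-cycle are exactly its $n$ cyclic rotations, so that the fibers of $\pi$ coincide with the conjugacy classes on which $N$ was shown to be invariant.
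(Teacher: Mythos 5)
Your proof is correct and matches the paper's (implicit) argument exactly: the paper states this corollary without a separate proof, relying precisely on Proposition~\ref{Nprop}(i) together with the fact that the word representations of an $n$-cycle are its $n$ cyclic conjugates, so that $N$ factors through the quotient map $\pi$.
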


\begin{center}
\begin{tikzpicture} 
\node at (0, 0) {$\begin{small}\left\{\begin{array}{c}
    \text{Words associated to} \\
    \text{permutations of }S_n
    \end{array} \right\}\end{small}$} ;

\node at (0, -3) {$\quad ^{\begin{small}
\left\{\begin{array}{c}
    \text{Words associated to} \\
    \text{permutations of }S_n
    \end{array} \right\}
    \end{small}}\Big/_{\text{Conjugation}}$} ;

\node at (0, -6.5){$\begin{small}\left\{\begin{array}{c}
    \text{Circular} \\
    \text{ permutations of }S_n
    \end{array} \right\}\end{small}$};
    
  \node at (6.5,0) {$\mathbb{Z}.$};
  
  \node at (4, 0.3) {$N$};
  
  \node at (-0.3, -1.2) {$\pi$};
  
  \node at (4.6,-4){$N$};

\node at (3, -1.2){$\circlearrowleft$};

  \node at (2, -5){$\circlearrowleft$};
  
  \node at (-0.2, -4.8){ \rotatebox{90}{$\simeq$}};

\draw [->] (0,-0.6) -- (0,-1.9) ; 
\draw [->] (0,-3.6) -- (0,-5.7) ;
\draw [->] (2.3,0) -- (6,0) ;
\draw  [->] (2.8,-2.8) -- (6.2,-0.35) ;
\draw[dashed]  [->] (2.3,-6.5) -- (6.3,-0.6) ;
\end{tikzpicture}
\end{center}

\newpage

Define a relation of circular permutation by $(w)\to (w')$ if $w$ has a circular factor $sr$ with $s>r+1$ and $w'$ is 
obtained by replacing in $w$ this factor by $rs$. Clearly, this relation does not depend on the chosen representatives of 
the circular permutations. For example, $(12345)\to (52341)$ is seen also by $(34512)\to (34152)$.

\begin{corollary}\label{order1} The reflexive and transitive closure of the relation $\to$ is a partial order on the set of circular permutations. The corresponding poset is graded by $N$, its smallest element is $(12\cdots n)$ and its largest one is $(n\cdots 21)$.
\end{corollary}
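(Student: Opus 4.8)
The plan is to make the integer-valued function $N$ do all the work, deducing each clause of the statement from Proposition \ref{Nprop}. Write $\leq$ for the reflexive and transitive closure of $\to$. Reflexivity and transitivity hold by construction, so only antisymmetry needs proof. By the preceding corollary, $N$ is well defined on circular permutations, and by Proposition \ref{Nprop}(ii) every elementary step $(w)\to(w')$ satisfies $N(w')=N(w)+1$. Hence along any chain $(w)=(w_0)\to(w_1)\to\cdots\to(w_m)=(w')$ one gets $N(w')=N(w)+m$, so a strict relation $(w)<(w')$ (with $m\geq1$) forces $N(w)<N(w')$. Were both $(w)\leq(w')$ and $(w')\leq(w)$ to hold with $(w)\neq(w')$, this would give $N(w)<N(w')<N(w)$, a contradiction; thus $\leq$ is a partial order.

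Next I would establish the grading by showing that the covering relations of $\leq$ are exactly the elementary steps $\to$. If $(w)\to(w')$ and some $(u)$ satisfied $(w)<(u)<(w')$, then the monotonicity above together with integrality of $N$ would force $N(w)<N(u)<N(w)+1$, which is impossible; so every step is a cover. Conversely, any strict relation factors as a chain of steps, and a cover admits no interior vertex, so a cover must be a single step. Since $N$ increases by exactly $1$ across each cover and $N(12\cdots n)=0$ by Proposition \ref{Nprop}(iii), the function $N$ itself (no shift needed) is a rank function, so the poset is graded, its rank taking all values in $\{0,1,\dots,\binom n3\}$ by parts (iii) and (iv).

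The combinatorial heart, and the step I expect to be the main obstacle, is identifying the extremal elements. First I would reformulate extremality directly from the definition of $\to$: a circular permutation $(w)$ is minimal iff $w$ has no cyclically adjacent pair ``$a$ then $b$'' with $b\geq a+2$, that is, iff every cyclically ascending adjacent pair increases the value by exactly $1$. Consequently the maximal increasing runs of the cyclic word consist of consecutive integers and partition $\{1,\dots,n\}$ into intervals. The run containing $1$ is $\{1,2,\dots,d\}$ for some $d$; if a second run existed, the cyclic successor of $d$ would start it with a value that is simultaneously $\geq d+1$ (lying in a disjoint higher interval) and $<d$ (a descent), which is impossible. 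Hence there is a single run, $(w)=(12\cdots n)$, the unique minimal element. The dual reformulation—$(w)$ is maximal iff every cyclically descending adjacent pair drops the value by exactly $1$—yields by the same interval argument that $(n\cdots21)$ is the unique maximal element (alternatively one invokes an order-reversing symmetry once available).

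Finally I would upgrade ``unique minimal/maximal'' to ``least/greatest''. The set of circular permutations in $S_n$ is finite, so starting from any $(w)\neq(12\cdots n)$, which by the above is non-minimal, one repeatedly applies a downward step; since $N$ strictly decreases the visited elements are distinct, the process terminates, and it can only stop at the unique minimal element $(12\cdots n)$. Reversing the chain shows $(12\cdots n)\leq(w)$, so it is the least element, and the dual argument gives $(n\cdots21)$ as the greatest. Apart from the interval/run argument of the third paragraph, everything reduces to bookkeeping on the integer-valued $N$.
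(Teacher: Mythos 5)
Your proof is correct and follows essentially the same strategy as the paper's: antisymmetry and grading come from the fact that $N$ increases by exactly $1$ along each elementary step (Proposition \ref{Nprop}), the extremal elements are identified by showing that absence of an applicable move forces the cyclic word to be a sequence of consecutive integers, and finiteness plus uniqueness of the minimal (resp.\ maximal) element upgrades it to a least (resp.\ greatest) element. The only cosmetic difference is that you characterize the minimum directly and get the maximum by duality, whereas the paper argues the maximum directly (starting the word at $n$) and treats the minimum ``similarly.''
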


\begin{proof} That it is an order follows since there is no cycle in the transitive closure of $\to$, by Proposition \ref{Nprop} (ii). 

Note that if for a circular permutation $(w)$, $w$ has the property of having no circular factor $sr$ with $s>r+1$, then $
(w)=(n\cdots 21)$: indeed we may assume that $w$ begins by $n$, and then the second letter must be $n-1$, the 
third one must be $n-2$, and so on. The element $(n\ldots 21)$ is the maximum of the poset; indeed, it is the only one 
having the previous property; moreover, any element $\sigma$ having some circular factor $sr$ ($s>r+1$) is smaller 
than some other element, by the definition of $\to$, so that by iteration, $\sigma<(n\cdots 21)$.

Similarly, $(12\cdots n)$ is the smallest element of the poset.
\end{proof}

\begin{itemize}
\item[]
\item[]
\end{itemize}

\begin{figure}[h!]
\begin{center}
\includegraphics[scale=0.75]{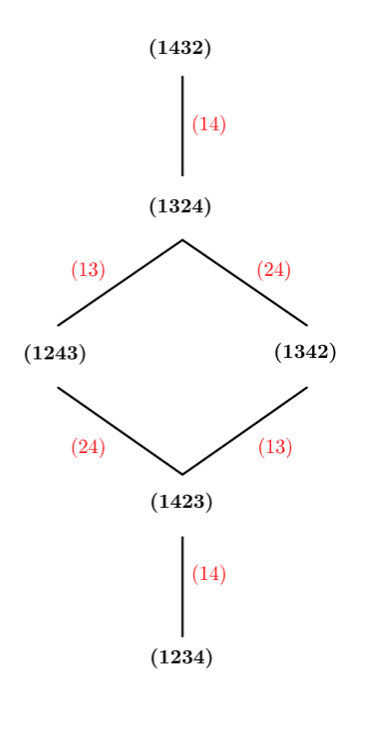} 
\end{center}
\caption{Poset of circular permutations for $n=4$. The red labels indicate the cover relation: it is the corresponding large circular descent of the lower circular permutation.}\label{P4}
\end{figure}

\newpage

\begin{figure}[h!]
\begin{center}
\includegraphics[scale=0.9]{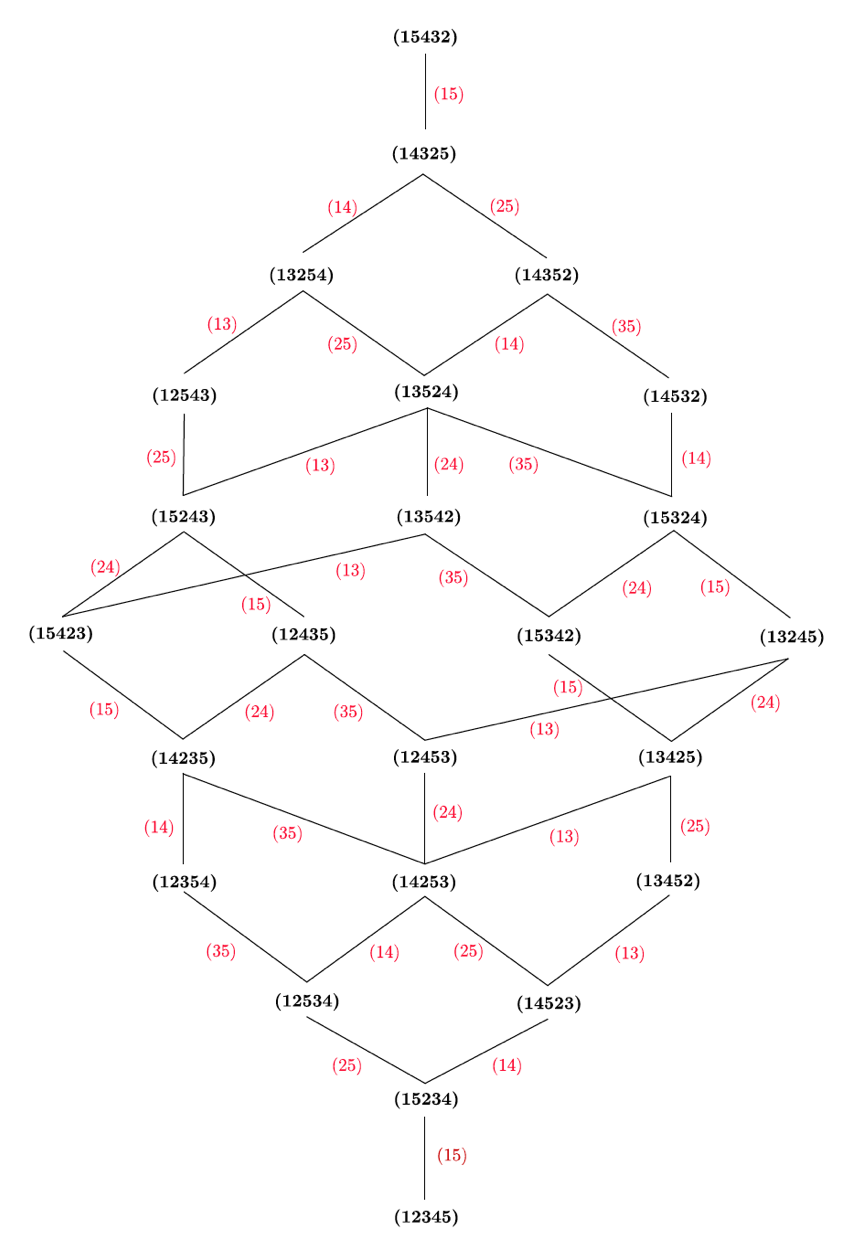} 
\end{center}
\caption{Poset of circular permutations for $n=5$. For the significance of the red labels, see Figure \ref{P4}.}\label{P5}
\end{figure}

\newpage


\begin{corollary} The image of the function  $N$ is $\{0,1,\ldots,\binom{n}{3}\}$.
\end{corollary}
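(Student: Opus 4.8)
The plan is to combine the values of $N$ at the two extreme elements with the fact, already established in Proposition \ref{Nprop}(ii), that each elementary step $\to$ raises $N$ by exactly one. The inclusion of the image in $\{0,1,\ldots,\binom{n}{3}\}$ is immediate: for any circular permutation $\sigma$ we have $(12\cdots n)\leq\sigma\leq(n\cdots 21)$ by Corollary \ref{order1}, so there exist chains $(12\cdots n)\to\cdots\to\sigma$ and $\sigma\to\cdots\to(n\cdots 21)$. Since $N$ increases by $1$ along each arrow and equals $0$ at the bottom (Proposition \ref{Nprop}(iii)) and $\binom{n}{3}$ at the top (Proposition \ref{Nprop}(iv)), the value $N(\sigma)$ is sandwiched between $0$ and $\binom{n}{3}$.

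For surjectivity I would exhibit a single saturated chain from the minimum to the maximum and read off its intermediate values. Starting from $(12\cdots n)$, as long as the current element is not $(n\cdots 21)$ it possesses, by the argument in the proof of Corollary \ref{order1}, a circular factor $sr$ with $s>r+1$, hence admits an elementary step $\to$ to a strictly larger element. Iterating produces a chain on which $N$ is strictly increasing; since $N$ is bounded above by $\binom{n}{3}$, the process terminates, necessarily at the unique element with no such circular factor, namely $(n\cdots 21)$.

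Along this chain $N$ starts at $0$, ends at $\binom{n}{3}$, and increases by exactly $1$ at every step; consequently it takes each integer value in $\{0,1,\ldots,\binom{n}{3}\}$ exactly once as the chain is traversed. This shows that every such integer lies in the image, which together with the first paragraph gives the claimed equality.

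The content here is essentially bookkeeping: all the genuine work was carried out in Proposition \ref{Nprop} and Corollary \ref{order1}. The only point that requires a moment's care, and which I expect to be the main (mild) obstacle, is the existence of a saturated chain reaching the top, i.e.\ that one never gets stuck at a non-maximal element. But this is precisely the observation, recorded in the proof of Corollary \ref{order1}, that $(n\cdots 21)$ is the only circular permutation lacking a circular factor $sr$ with $s>r+1$, so termination of the greedy ascent can occur only at the maximum.
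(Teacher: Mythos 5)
Your proof is correct and follows essentially the same route as the paper, which simply cites Corollary \ref{order1} together with Proposition \ref{Nprop}: you have merely made explicit the chains (bottom $\to\sigma\to$ top, and a saturated greedy chain from bottom to top) along which $N$ steps by exactly $1$ from $0$ to $\binom{n}{3}$. Nothing further is needed.
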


\begin{proof} 
This follows from the previous corollary and from Proposition \ref{Nprop}.
\end{proof}

\begin{corollary}\label{inversion} The functions $\sigma\mapsto \sigma^{-1}$ and $\sigma\mapsto w_0\circ\sigma \circ w_0$ (where ${w_0=n\cdots 21}$ is the longest permutation) are anti-automorphisms of the poset. 
\end{corollary}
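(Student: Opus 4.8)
The plan is to reduce everything to the covering relation $\to$. Since the poset is graded by $N$ (Corollary \ref{order1}) and each application of $\to$ raises $N$ by exactly one (Proposition \ref{Nprop}(ii)), the relation $\to$ is precisely the covering relation of the poset, and the order is its transitive closure; in particular a single $\to$ is never refinable. Consequently a bijection of the set of circular permutations onto itself is an anti-automorphism as soon as it reverses $\to$. Both maps in question are involutions ($\sigma\mapsto\sigma^{-1}$ obviously, and $\sigma\mapsto w_0\sigma w_0$ because $w_0^2=\id$), so it suffices to check a single implication: that $\sigma\to\sigma'$ forces the image of $\sigma'$ to cover the image of $\sigma$. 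Indeed, if $f$ is an involution with $x\lessdot y\Rightarrow f(y)\lessdot f(x)$, then applying this to the pair $(f(y),f(x))$ yields the converse implication, so $f$ reverses covers in both directions and hence reverses the order. That both maps are bijections of the set of $n$-cycles is clear, since inversion and conjugation preserve cycle type.

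Next I would translate both maps into operations on the cyclic word. Writing $\sigma=(a_1a_2\cdots a_n)$, the inverse $\sigma^{-1}$ is the reversed cyclic word $(a_1a_na_{n-1}\cdots a_2)$, while conjugation by $w_0$ complements each letter, $w_0\sigma w_0=(a_1^*a_2^*\cdots a_n^*)$ with $a^*=n+1-a$; both are well defined on cyclic words, independently of the chosen representative. The heart of the argument is then to see how one covering step interacts with these two operations, recalling that $\sigma\to\sigma'$ means $\sigma$ has a circular factor $sr$ with $s>r+1$ and $\sigma'$ replaces it by $rs$.

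For inversion, reversing the cyclic word turns the factor $sr$ of $\sigma$ into the factor $rs$ of $\sigma^{-1}$, and the factor $rs$ of $\sigma'$ into the factor $sr$ of $(\sigma')^{-1}$. Since $s>r+1$, the latter is a large circular descent of $(\sigma')^{-1}$, and replacing it by $rs$ yields exactly $\sigma^{-1}$; hence $(\sigma')^{-1}\to\sigma^{-1}$, as desired. For conjugation by $w_0$, complementation sends the factor $sr$ of $\sigma$ to the factor $s^*r^*$ of $w_0\sigma w_0$ (note $s^*<r^*$) and the factor $rs$ of $\sigma'$ to the factor $r^*s^*$ of $w_0\sigma' w_0$; because $r^*-s^*=s-r>1$, this $r^*s^*$ is a large circular descent of $w_0\sigma'w_0$, and replacing it by $s^*r^*$ gives back $w_0\sigma w_0$. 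Thus $w_0\sigma'w_0\to w_0\sigma w_0$, again reversing the cover.

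I expect the only real subtlety to lie in the bookkeeping: verifying that ``reverse'' and ``complement'' are genuinely well defined on \emph{circular} permutations, and that each converts the descending factor $sr$ into the correct factor with the strict gap condition preserved in the right direction (so that one obtains once more a large circular descent, not merely some inversion). Once this is set up the two verifications are immediate and symmetric. As a consistency check one may note that the conclusion forces $N(\sigma^{-1})=N(w_0\sigma w_0)=\binom{n}{3}-N(\sigma)$, exactly as an anti-automorphism of a graded poset must send rank $k$ to corank $k$.
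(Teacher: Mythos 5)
Your proposal is correct and follows essentially the same route as the paper: identify inversion with reversal of the cyclic word and conjugation by $w_0$ with letter-wise complementation, then check that each operation carries the covering move (replace a circular factor $sr$, $s>r+1$, by $rs$) to the reverse covering move. The paper states this in two terse sentences; your additional scaffolding (that $\to$ is exactly the covering relation, and that for an involution one implication suffices) is sound but just makes explicit what the paper leaves implicit.
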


In Figures \ref{P4} and \ref{P5}, the first automorphism corresponds to a central symmetry of the Hasse diagram.

\begin{proof} 1. For inversion, this follows from the fact that the inverse of $(w)$ is $(\tilde w)$, where $\tilde w$ is the reversal of $w$. Therefore, cyclic factors $rs$ and $sr$ are interchanged in $(w)$ and its inverse, showing that $(u)\to (v)$ if and only if $(\tilde v)\to (\tilde u)$.

2. Note that a cyclic factor $rs$ gives after applying the second function the cyclic factor $(n+1-r)(n+1-s)$, which implies the result.
\end{proof}

In the Hasse diagram of the poset of circular permutations, each edge is labelled by a transposition $(rs)$, $r+1<s$, 
which is defined by the fact that the largest permutation $\tau$ of this edge has the circular factor $(rs)$, and the 
smallest $\sigma$ has the circular factor $sr$ (see Figures \ref{P4} and \ref{P5}); this transposition clearly conjugates both permutations: $
\tau=(rs)\circ \sigma \circ (rs)$. Taking any upwards path in this diagram, from a circular permutation $\sigma$ to a 
larger one, $\tau$ say, we may perform the corresponding product of transpositions in $S_n$ from right to left, obtaining a 
permutation $\alpha$, which conjugates them: $\tau=\alpha\circ \sigma \circ \alpha^{-1}$. This implies that the class of $\alpha$ modulo the centralizer of $\sigma$ (which has $n$ elements) does not depend on the chosen path. But we may be more precise.

\begin{proposition}\label{path} With the previous notations, $\alpha$ depends only on $\sigma$ and $\tau$ and not on the chosen path. In particular, for a maximal path (thus $\sigma=(12\cdots n)$ and $\tau=(n\cdots 21)=\sigma^{-1}$), one has, as words, $\alpha=n\ldots21$ if $n$ is odd, and $(n/2)\ldots 21 n\ldots (1+n/2)$ if $n$ is even.
\end{proposition}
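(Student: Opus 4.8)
The plan is to treat the two assertions separately. For the first one (path independence), recall from the paragraph preceding the statement that $\alpha$ is already determined modulo the centralizer of $\sigma$, which for an $n$-cycle is the cyclic group $\langle\sigma\rangle$ of order $n$. Hence any two upward paths from $\sigma$ to $\tau$ yield conjugators that differ by a power $\sigma^{k}$, and the entire content of the first claim is to show $k=0$. I will think of this exponent as a winding number and try to kill it by a confluence argument over the elementary two-dimensional faces of the Hasse diagram, supplemented by one genuinely global input.

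First I would classify how two distinct large circular descents of a fixed $\sigma$ can interact. Viewing each descent $s\,r$ (with $\sigma(s)=r$, $s>r+1$) as an arc of the functional graph of the $n$-cycle, two such arcs are either vertex-disjoint or share exactly one vertex. In the disjoint case the two moves commute, the interval they span is a square, and since the two labels are disjoint transpositions the product of labels is the same along both edges of the square. In the shared-vertex case one gets a consecutive factor $a\,b\,c$ with $a>b+1$ and $b>c+1$; resolving it in the two possible ways reconverges after three steps through a hexagonal face whose two boundary chains carry, in opposite orders, the three transpositions $(b,a)$, $(c,b)$, $(c,a)$ of the $3$-element set $\{a,b,c\}$. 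Because a product of three transpositions is an odd permutation of a $3$-set, hence a transposition, it is an involution; the reversed word therefore represents the same group element, so again the label product is preserved across the face. These two verifications are routine.

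The main obstacle is the global step: one must show that any two maximal chains of an arbitrary interval $[\sigma,\tau]$ can be transformed into one another by successive moves across these square and hexagonal faces, i.e.\ that the monodromy of the label cocycle is trivial. This does \emph{not} follow formally from the local relations — even in finite graded lattices, rank-two (and rank-three) faces need not connect all maximal chains — so it is exactly here that real work is needed. I would resolve it by realizing the situation on a universal cover: note that the value $1$ is never the larger element of a large circular descent (that would force $s\le 2$), so whenever $1$ is moved it slides exactly one slot backward along the cycle, and thus along \emph{any} upward path the value $1$ moves monotonically backward, one slot at a time. Lifting the cyclic word to a periodic arrangement on $\mathbb{Z}$ then turns the ambiguous power $\sigma^{k}$ into an honest integer displacement of $1$, and showing that this displacement is a function of $\tau$ alone (a state function, computable independently of the later affine realization in Theorem \ref{interval}) pins down $k=0$. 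The heart of the proof is establishing this well-definedness of the lift, equivalently the simple-connectivity of the order complexes of the open intervals; everything else is bookkeeping.

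For the second assertion I would use path independence to evaluate $\alpha$ along a single convenient maximal chain, for instance the one that first drives $n$ into position by repeated large circular descents, then $n-1$, and so on. Tracking the accumulated product, one finds that it equals the longest element $w_0=n\cdots21$ up to the winding contribution, the latter being governed by the parity of the number of descents of the form $s\,1$ that are resolved (equivalently, the net backward winding of the value $1$). For odd $n$ this contribution vanishes and $\alpha=w_0$; for even $n$ it amounts to an extra central-type factor $\sigma^{n/2}$, and rewriting $\sigma^{n/2}w_0$ as a word gives precisely $(n/2)\cdots 21\,n\cdots(1+n/2)$. I would finish with a one-line direct check that each of these two words indeed conjugates $(12\cdots n)$ into $(n\cdots21)$, confirming that the computed element is the correct conjugator.
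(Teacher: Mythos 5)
Your reduction of path-independence to killing the cyclic ambiguity $\sigma^{k}$ by tracking the value $1$ is sound, and your observation that $1$ is never the larger entry of a large circular descent is correct. But the proposal stops exactly where the proof has to happen: you yourself say the ``heart'' is establishing that the backward displacement of $1$ (equivalently, the number of edges labelled $(1,s)$ on an upward path from $\sigma$ to $\tau$) depends only on the endpoints, and you propose to obtain this from simple-connectivity of the order complexes of open intervals --- a statement you neither prove nor reduce to anything available, and a far heavier tool than needed. The paper closes precisely this gap with machinery it has already established at that point: by Theorem \ref{iso} and Corollary \ref{label}, an edge of the Hasse diagram labelled $(r,s)$ increases the coordinate $v_{rs}$ of the associated admitted vector by $1$ and leaves every other coordinate fixed; hence on \emph{any} path from $\sigma$ to $\tau$ the number of edges labelled $(r,s)$ equals $v_{rs}(\tau)-v_{rs}(\sigma)$, which is manifestly path-independent. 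This is exactly the ``state function'' you ask for, and it determines the position of \emph{every} letter, not just $1$: the paper gets $\alpha^{-1}(j)\equiv j+\sum_{r<s=j}v_{rs}-\sum_{j=r<s}v_{rs}\pmod n$, whence $\alpha$ itself. Your square/hexagon confluence discussion, while locally correct, is dead weight, since you abandon it yourself for exactly the right reason.

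The second assertion is likewise asserted rather than proved. ``Tracking the accumulated product, one finds that it equals $w_0$ up to the winding contribution'' is not a computation, and the winding is not governed by a \emph{parity}: with the maximal admitted vector $v_{rs}=s-r-1$ (Corollary \ref{largest}), the relevant quantity for the letter $j$ is $j+\sum_{r<j}(j-r-1)-\sum_{j<s}(s-j-1)$, which must be evaluated modulo $n$ (giving $\alpha^{-1}(j)\equiv n+1-j$ for odd $n$ and $\equiv n/2+1-j$ for even $n$, as in the paper). Finally, your closing ``one-line direct check'' that the claimed word conjugates $(12\cdots n)$ into $(n\cdots 21)$ cannot rescue the argument: there are $n$ permutations with that property (a coset of the centralizer $\langle\sigma\rangle$), and the entire content of the statement is identifying \emph{which} of them $\alpha$ is --- which is again the positional computation left undone.
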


As examples, for $n=5$ and $6$ the latter permutations are $54321$ and $321654$.

The proof of the proposition will be given at the end of Section \ref{vectors}.

\section{An isomorphism towards admitted vectors}\label{vectors}

Define $S=\{(i,i+1),i=1,\ldots,n-1\}$, $T_1=\{(i,j), 1\leq i, i+1<j\leq n\}$ and $T=S\cup T_1$. 
For a vector $v$ in $\mathbb Z^T$, we write $v_{ij}:=v_{(i,j)}$.
We say that $v\in\N^T$  is {\em admitted} if 
$v_{i,i+1}=0$ for any $i$, and if for any $(i,k)\in T_1$, one has
\begin{equation}\label{admitted}
v_{ij}+v_{jk} \leq v_{ik} \leq v_{ij}+v_{jk}+1~\text{~for all~}~ i<j<k.
\end{equation}

Note that equivalently $v_{ik}-v_{ij}-v_{jk}=0 $ or $1$.

The set of admitted vectors inherits the natural partial order of $\N^T$. We give below the examples of the posets of admitted vectors for $n=4$ and $n=5$.

\newpage

\vspace*{\stretch{1}}
\begin{figure}[h!]
\begin{center}
\includegraphics[scale=0.75]{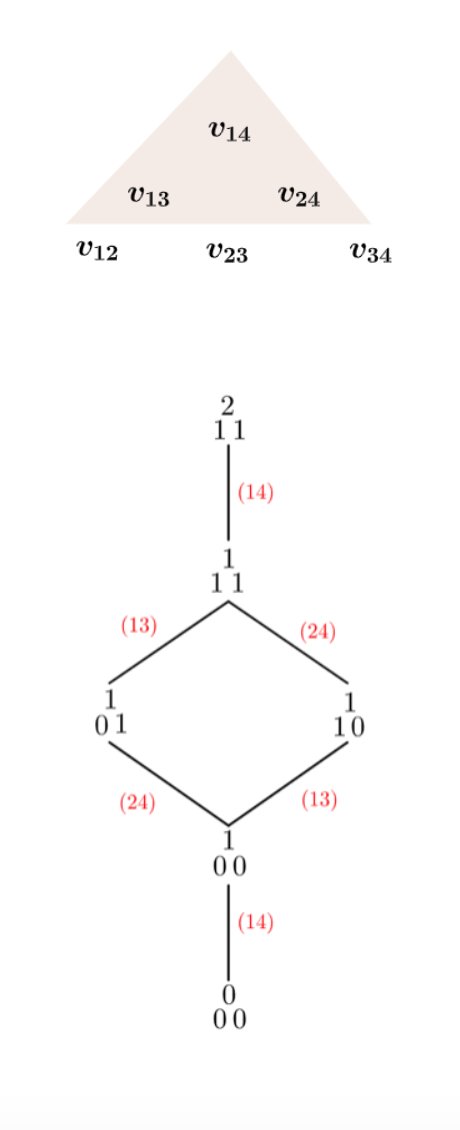} 
\end{center}
\caption{Poset of admitted vectors for $n=4$. The above triangle shows the relative positions of admitted vectors in $\mathbb{N}^6$. In the expression of admitted vectors we drop the first line since the coefficients $v_{i,i+1}=0$. The red labels indicate the cover relation induced by the natural order of $\mathbb{N}^6$. }
\label{PyraA4}
\end{figure}
\vspace*{\stretch{1}}

\newpage

\vspace*{\stretch{1}}
\begin{figure}[h!]
\begin{center}
\includegraphics[scale=0.77]{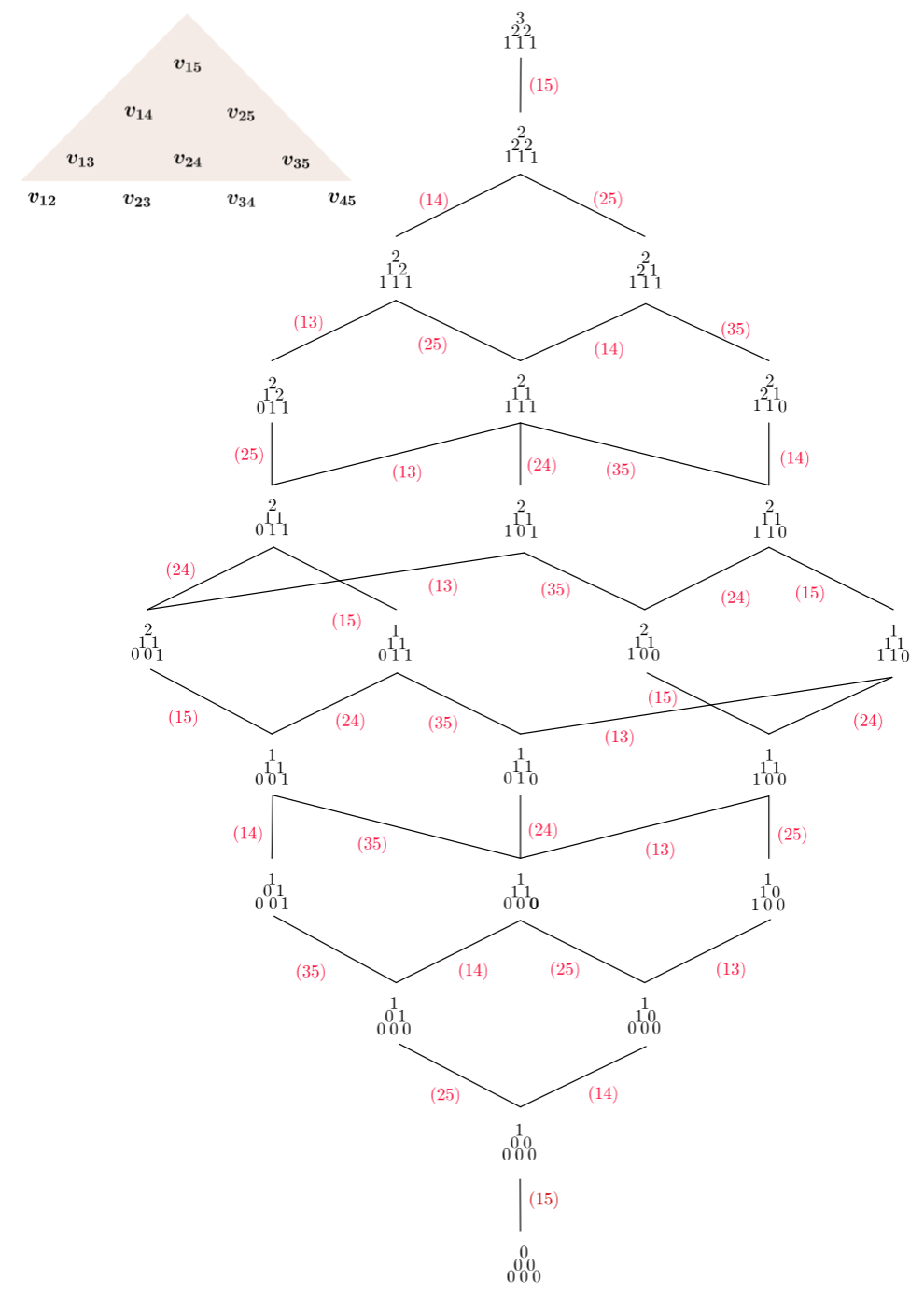} 
\end{center}
\caption{Poset of admitted vectors for $n=5$. See the comments in Figure \ref{PyraA4}.
}
\label{PyraA5}
\end{figure}
\vspace*{\stretch{1}}

\newpage

We define a mapping $V_0$ from $S_n$ into $\mathbb Z^T$ as follows: let $w\in S_n$ and let $V_0(w)=v\in\mathbb Z^T$ be defined by $$v_{ij}=-\gamma_{ij}(w)+\sum_{i\leq k<j}\gamma_{k,k+1}(w).$$

\begin{proposition}\label{V-0} Each $V_0(w),w\in S_n$, is an admitted vector; $V_0(w)=V_0(w')$ if $w,w'$ are conjugate; and $N(w)$ is equal to the sum of the components of $V_0(w)$.
\end{proposition}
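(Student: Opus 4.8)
The plan is to treat the three assertions in turn, the conceptual crux being that the sum $\sum_{i\le k<j}\gamma_{k,k+1}(w)$ appearing in the definition of $v_{ij}$ is \emph{additive} along a chain $i<j<k$, so that the admissibility inequalities collapse to a statement about three letters. First I would dispose of the easy conditions defining an admitted vector. The equality $v_{i,i+1}=0$ is immediate, since for $j=i+1$ the sum reduces to the single term $\gamma_{i,i+1}$, which cancels the term $-\gamma_{i,i+1}$. For nonnegativity of $v_{ij}$, note that if $\gamma_{ij}(w)=1$ then $j$ precedes $i$ in $w$, so reading through the values $i,i+1,\dots,j$ there must be at least one index $k$ with $i\le k<j$ for which $k+1$ precedes $k$, i.e. $\gamma_{k,k+1}(w)=1$; hence the sum is $\ge 1=\gamma_{ij}(w)$, giving $v_{ij}\ge 0$ (the case $\gamma_{ij}=0$ being trivial). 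This shows $V_0(w)\in\mathbb{N}^T$.

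For the admissibility inequalities the key computation is the telescoping of the adjacent-inversion sums: for $i<j<k$ one has $v_{ij}+v_{jk}=-\gamma_{ij}-\gamma_{jk}+\sum_{i\le l<k}\gamma_{l,l+1}$, whence
$$v_{ik}-v_{ij}-v_{jk}=\gamma_{ij}(w)+\gamma_{jk}(w)-\gamma_{ik}(w).$$
It then remains to prove that this signed sum of indicators is always $0$ or $1$. I expect this to be the main (though still elementary) step. It is a statement purely about the relative left-to-right order of the three values $i,j,k$ in $w$: I would run through the $3!=6$ possible orderings, in each of which $\gamma_{ij},\gamma_{jk},\gamma_{ik}$ take explicit values and the signed sum is directly checked to land in $\{0,1\}$. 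This establishes part (i).

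For part (ii) I would lean on the inversion bookkeeping already performed in the proof of Proposition \ref{Nprop}(i): it suffices to compare $w=ru$ with its conjugate $w'=ur$, where the only $\gamma$'s that change are $\gamma_{ir}$ (from $1$ to $0$) for $i<r$ and $\gamma_{rj}$ (from $0$ to $1$) for $j>r$; among adjacent pairs this means only $\gamma_{r-1,r}$ and $\gamma_{r,r+1}$ move. A short case analysis—according to whether $r$ is an endpoint of the pair $(i,j)$ and whether the indices $r-1,r$ fall in the range $i\le k<j$—then shows each $v_{ij}$ is unchanged: the single unit lost from $-\gamma_{ij}$ or from $\gamma_{r-1,r}$ is always exactly compensated by the unit gained from $\gamma_{r,r+1}$ or from $-\gamma_{ij}$.

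Finally, part (iii) is a rearrangement of a double sum. Since $T$ is the set of all pairs $i<j$, summing the definition gives $\sum_{i<j}v_{ij}=-\sum_{i<j}\gamma_{ij}+\sum_{i<j}\sum_{i\le k<j}\gamma_{k,k+1}$, and the first term already matches the corresponding term of $N(w)$ in Eq.(\ref{N}). For the second, I would exchange the order of summation: a fixed $k$ contributes $\gamma_{k,k+1}$ once for each pair $(i,j)$ with $i\le k<j$, and there are exactly $k$ choices of $i\in\{1,\dots,k\}$ and $n-k$ choices of $j\in\{k+1,\dots,n\}$, yielding the weight $k(n-k)$. This reproduces $\sum_{k}k(n-k)\gamma_{k,k+1}$, so that $\sum_{i<j}v_{ij}=N(w)$, completing the proof.
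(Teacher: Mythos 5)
Your proof is correct and follows essentially the same route as the paper's: the same telescoping identity $v_{ik}-v_{ij}-v_{jk}=\gamma_{ij}(w)+\gamma_{jk}(w)-\gamma_{ik}(w)$ verified over the possible relative orders of $i,j,k$ in $w$, the same case analysis for invariance under conjugation by a single letter $r$ (only $\gamma_{ir}$, $\gamma_{rj}$, and among adjacent pairs $\gamma_{r-1,r}$, $\gamma_{r,r+1}$ change), and the same exchange of summation producing the weight $k(n-k)$. The only inessential difference is nonnegativity, which you prove directly (an inversion $ji$ forces an inverted adjacent pair $k,k+1$ with $i\le k<j$), whereas the paper deduces it by induction on $j-i$ from the admissibility inequalities together with $v_{i,i+1}=0$.
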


\begin{proof} Let $v=V_0(w)$. Let $1\leq i<j\leq n$. 

If $(i,i+1)\in S$, then by definition of $V_0$, $v_{i,i+1}=-\gamma_{i,i+1}(w)+\gamma_{i,i+1}(w)=0$. 
If $(i,k)\in T$, and $i<j<k$, then 
\begin{align*}
v_{ik}-v_{ij}-v_{jk}  = ~& -\gamma_{ik}(w)+\sum_{i\leq p<k}\gamma_{p,p+1}(w)
                              +\gamma_{ij}(w)-\sum_{i\leq p<j}\gamma_{p,p+1}(w)~+ \\ 
                              &\text{~~} \gamma_{jk}(w)-\sum_{j\leq p<k}\gamma_{p,p+1}(w)\\ 
                              = ~& ~\gamma_{ij}(w)+\gamma_{jk}(w)-\gamma_{ik}(w).
\end{align*}

 By inspecting the relative positions of $i,j,k$ in the word $w$, 
one checks that the latter quantity is equal to 0 or 1. Hence $v$ satisfies the inequalities (\ref{admitted}). Moreover,
$v_{i,i+1}=0$, and by induction on $j-i$, these inequalities imply that the $v_{ij}$ are all nonnegative.

This proves that $v=V_0(w)$ is an admitted vector. We show now that it is invariant under conjugation of $w$.

Suppose that $w=ru,w'=ur$, $r\in \{1,\ldots,n\}$. Let $v=V_0(w),v'=V_0(w')$. Let $i<j$. Then 
$$
v_{ij}=-\gamma_{ij}(w)+\sum_{i\leq k<j}\gamma_{k,k+1}(w) ~\text{~and~}~ v'_{ij}=-\gamma_{ij}(w')+\sum_{i\leq k<j}\gamma_{k,k+1}(w').
$$ 

In the case $j=i+1$, we certainly have $v_{ij}=v'_{ij}$, since they are both equal to 0.

Thus we may assume that $j>i+1$.
Suppose that $i,j\neq r$. If $i<r<j$, there is the subword $r(r-1)$ in $w$, but not in $w'$, and there is the subword $
(r+1)r$ in $w'$, but not in $w$;  thus $v_{ij}=v'_{ij}$. If on the other hand, the double inequality $i<r<j$ does not hold, 
then the two previous sums are identically equal and $v_{ij}=v'_{ij}$, too. 

Suppose that $r=i$. Then $\gamma_{ij}(w)=0$, $\gamma_{ij}(w')=1$, and $\gamma_{i,i+1}(w)=0$, $\gamma_{i,i+1}(w')=1$; the other terms are identical for $w$ and $w'$, therefore $v_{ij}=v'_{ij}$. The case $r=j$ is similar.

We deduce that $V_0(w)=V_0(w')$. Thus $V_0(w)$ is invariant under conjugation of $w$.

Finally, we have 
\begin{align*}
\sum\limits_{i<j}v_{ij} & =\sum\limits_{i<j}(-\gamma_{ij}(w)+\sum\limits_{i\leq k<j}\gamma_{k,k+1}(w)) \\
							    & =-\sum\limits_{i<j}\gamma_{ij}(w)+\sum\limits_{k=1}^n\sum\limits_{i\leq k<j}\gamma_{k,k+1}(w)\\
							    & = -\sum\limits_{i<j}\gamma_{ij}+ \sum\limits_{1\leq k\leq n}k(n-k)\gamma_{k,k+1}(w) \\
							    & =N(w).
\end{align*}

\end{proof}

We introduce the functions $\delta_{ijk}: \N^T\to \N$, $1\leq i\leq j\leq k\leq n$, by $\delta_{ijk}(v)=v_{ik}-v_{ij}-v_{jk}$, where we put $v_{ii}=0$. If $v$ is 
an admitted vector, these functions take by (\ref{admitted}) their value in $\{0,1\}$, and moreover satisfy the following 
relation, for any $i\leq j\leq k\leq l$:
\begin{equation}\label{ptolemee}
\delta_{ijk}(v)+\delta_{ikl}(v)=\delta_{ijl}(v)+\delta_{jkl}(v).
\end{equation}

Indeed, the left-hand side evaluated on the admitted vector $v$ is equal to $v_{ik}-v_{ij}-v_{jk}+v_{il}-v_{ik}-v_{kl}$, whereas the right-hand side gives $v_{il}-v_{ij}-v_{jl}+v_{jl}-v_{jk}-v_{kl}$, which are both equal to $v_{il}-v_{ij}-v_{jk}-v_{kl}$.

\begin{theorem}\label{iso} The mapping $V_0$ induces a graded poset isomorphism $V$ from the set of circular permutations in $S_n$ into the set of admitted vectors in $\mathbb N^T$, ordered componentwise. The inverse mapping is completely determined as follows: let $w=1w'\in S_n$, viewed as word; then $V_0(w)=v$ implies that for any $1\leq i<j\leq n$, one has $\gamma_{ij}(w)=\delta_{1ij}(v)$.
\end{theorem}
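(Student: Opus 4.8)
The plan is to prove the bijectivity and the order-isomorphism separately, the latter splitting into an easy (order-preserving) and a hard (order-reflecting) half.

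\emph{The inverse formula and injectivity.} First I would establish the displayed inverse formula. In the proof of Proposition \ref{V-0} it is shown that $\delta_{ijk}(V_0(w))=\gamma_{ij}(w)+\gamma_{jk}(w)-\gamma_{ik}(w)$ for $i<j<k$. Taking $i=1$ and using that $w=1w'$ begins with $1$, so that $\gamma_{1j}(w)=\gamma_{1k}(w)=0$, this collapses to $\delta_{1jk}(V_0(w))=\gamma_{jk}(w)$, which is exactly the asserted formula. Since a permutation is determined by its inversions by value, and each circular permutation has a unique representative beginning with $1$, this formula recovers that representative from $V_0(w)$; hence $V$ is injective.

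\emph{Surjectivity.} Given an admitted vector $v$, I would reconstruct a preimage. Set $\gamma_{ij}:=\delta_{1ij}(v)\in\{0,1\}$ for $1<i<j$ and define a relation on $\{2,\dots,n\}$ by declaring $i$ to precede $j$ (for $i<j$) when $\gamma_{ij}=0$. By (\ref{ptolemee}) with first index $1$ one has $\gamma_{ab}+\gamma_{bc}-\gamma_{ac}=\delta_{abc}(v)\in\{0,1\}$ for $1<a<b<c$; the two cyclic patterns $\gamma_{ab}=\gamma_{bc}=0,\ \gamma_{ac}=1$ and $\gamma_{ab}=\gamma_{bc}=1,\ \gamma_{ac}=0$ would give the forbidden values $-1$ and $2$, so the relation is a total order. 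Prepending $1$ produces a word $w=1w'$ with $\gamma_{ij}(w)=\delta_{1ij}(v)$. It remains to check $V_0(w)=v$: for all $a<b<c$ one has $\delta_{abc}(V_0(w))=\gamma_{ab}(w)+\gamma_{bc}(w)-\gamma_{ac}(w)=\delta_{1ab}(v)+\delta_{1bc}(v)-\delta_{1ac}(v)=\delta_{abc}(v)$, again by (\ref{ptolemee}); since an admitted vector is determined by its values $\delta_{ijk}$ through $v_{i,i+1}=0$ and the recursion $v_{ik}=v_{i,k-1}+\delta_{i,k-1,k}$, and both $V_0(w)$ and $v$ are admitted, they coincide.

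\emph{Grading and the order-preserving direction.} By Proposition \ref{V-0}, $N(w)$ equals the sum of the components of $V_0(w)$, so $V$ intertwines the grading $N$ of Corollary \ref{order1} with the sum-of-components grading on $\mathbb N^T$. For a covering $(w)\to(w')$, where $w'$ is obtained by turning a circular factor $sr$ ($s>r+1$) into $rs$, one may assume $sr$ is a genuine factor (Proposition \ref{Nprop}(i)); then only $\gamma_{rs}$ changes, from $1$ to $0$, while every $\gamma_{k,k+1}$ is unchanged because $\{r,s\}\neq\{k,k+1\}$. Hence $V_0(w')=V_0(w)+e_{rs}$, where $e_{rs}$ denotes the vector with a single $1$ in coordinate $(r,s)$; this is a unit step up in $\mathbb N^T$. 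Taking transitive closures, $\sigma\le\tau$ implies $V(\sigma)\le V(\tau)$.

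\emph{The order-reflecting direction (main obstacle).} The remaining, and hardest, point is that $V^{-1}$ is order-preserving, equivalently that the componentwise order on admitted vectors is the transitive closure of the unit steps $v\to v+e_{rs}$ that stay admitted. Concretely I must prove the chain lemma: if $v<v'$ are admitted then there exists $(r,s)$ with $v_{rs}<v'_{rs}$ and $v+e_{rs}$ admitted (each such step is then automatically a circular cover, by the computation above read backwards, including the wrap-around case $r=1$). Admissibility of $v+e_{rs}$ amounts to three conditions on $v$, namely $\delta_{rjs}(v)=0$ for $r<j<s$, $\delta_{irs}(v)=1$ for $i<r$, and $\delta_{rsk}(v)=1$ for $k>s$. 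Using $v\le v'$ and (\ref{admitted}) one checks that a failure of the first condition forces a strictly shorter differing coordinate into the support of $v'-v$, whereas a failure of either of the last two forces a strictly longer one; thus no single extremal choice of $(r,s)$ settles all three at once, and reconciling these opposing pulls is the core difficulty. My plan is to resolve it by a careful extremal selection within the support of $v'-v$, descending into the forced shorter coordinate whenever the first condition fails and arguing termination, using the full strength of admissibility of both $v$ and $v'$ together with the Ptolemy relation (\ref{ptolemee}); the anti-automorphism $v_{ij}\mapsto v_{n+1-j,\,n+1-i}$ of Corollary \ref{inversion} lets me trade the third condition for the second and halve the work. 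Once this chain lemma is in place, $V$ and $V^{-1}$ both send coverings to coverings, and since the order of a finite poset is the transitive closure of its coverings, $V$ is the desired graded poset isomorphism.
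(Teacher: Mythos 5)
Your first three parts are sound and essentially retrace the paper's own route: the inverse formula via $\delta_{1ij}(V_0(w))=\gamma_{ij}(w)$, injectivity from the fact that a permutation is determined by its inversions by value, surjectivity by rebuilding the word from the data $(\delta_{1ij}(v))$ (your tournament-transitivity argument is a repackaging of the paper's verification that $I=\{(i,j):\delta_{1ij}(v)=1\}$ satisfies the two axioms of an inversion set), and the unit-step computation $V_0(w')=V_0(w)+e_{rs}$ for a circular-factor swap. The problem is the last part, which you yourself flag as the main obstacle: you never prove the chain lemma, you only state it and announce a plan. Since that lemma is the heart of the theorem --- everything else is bookkeeping --- the proposal is incomplete where it matters most. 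Worse, the plan as described does not obviously terminate: alternately ``descending'' (when the inner condition fails) and ``ascending'' (when an outer condition fails) could a priori cycle. The paper (Lemma \ref{cover}, Lemma \ref{monter}, Corollary \ref{cover-admitted}) resolves exactly the tension you identified by a two-stage extremal choice. First, any \emph{minimal} pair $(i,j)$ in the support of $v'-v$ automatically satisfies the inner condition: minimality gives $v_{ip}=v'_{ip}$, $v_{pj}=v'_{pj}$, and $v_{ij}\leq v'_{ij}-1$, whence $\delta_{ipj}(v)\leq\delta_{ipj}(v')-1\leq 0$. Second, among pairs satisfying the inner condition, one takes a \emph{maximal} one and shows via the Ptolemy relation (\ref{ptolemee}) that a failure of an outer condition, say $\delta_{ijl}(v)=0$ with $l>j$ chosen minimal, forces $\delta_{ipl}(v)=0$ for every $i<p<l$, i.e.\ produces a strictly larger pair still satisfying the inner condition, contradicting maximality. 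The structural fact your plan lacks is precisely that the ascent step \emph{preserves} the inner condition (so the process is monotone and terminates), and --- one extra line the paper leaves implicit --- that it also preserves membership in the support, since $v_{il}=v_{ij}+v_{jl}\leq (v'_{ij}-1)+v'_{jl}\leq v'_{il}-1$; with that, maximal elements of the set of support pairs satisfying the inner condition do the job.

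There is a second, smaller gap: you claim that once the chain lemma is in place, each admitted unit step $v\mapsto v+e_{rs}$ is ``automatically a circular cover, by the computation above read backwards.'' It is not automatic. Your forward computation shows that a circular-factor swap induces a unit step; the converse --- that admittedness of $v+e_{rs}$ forces $sr$ to be a circular factor of the word $w$ with $V_0(w)=v$ --- is a genuine separate argument. The paper carries it out in step 7 of its proof: using the inverse formula and (\ref{ptolemee}), the three $\delta$-conditions of Lemma \ref{monter} are translated into the four $\gamma$-conditions of Lemma \ref{inversionij} characterizing ``$sr$ is a factor of $w$,'' and the wrap-around case $r=1$ (where one must show $s$ is the \emph{last} letter of $w=1w'$) is treated separately. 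You gesture at the wrap-around case but supply no argument for either case, so this step too needs to be written out before the proposal counts as a proof.
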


\begin{figure}[h!]
\begin{center}
\includegraphics[scale=0.6]{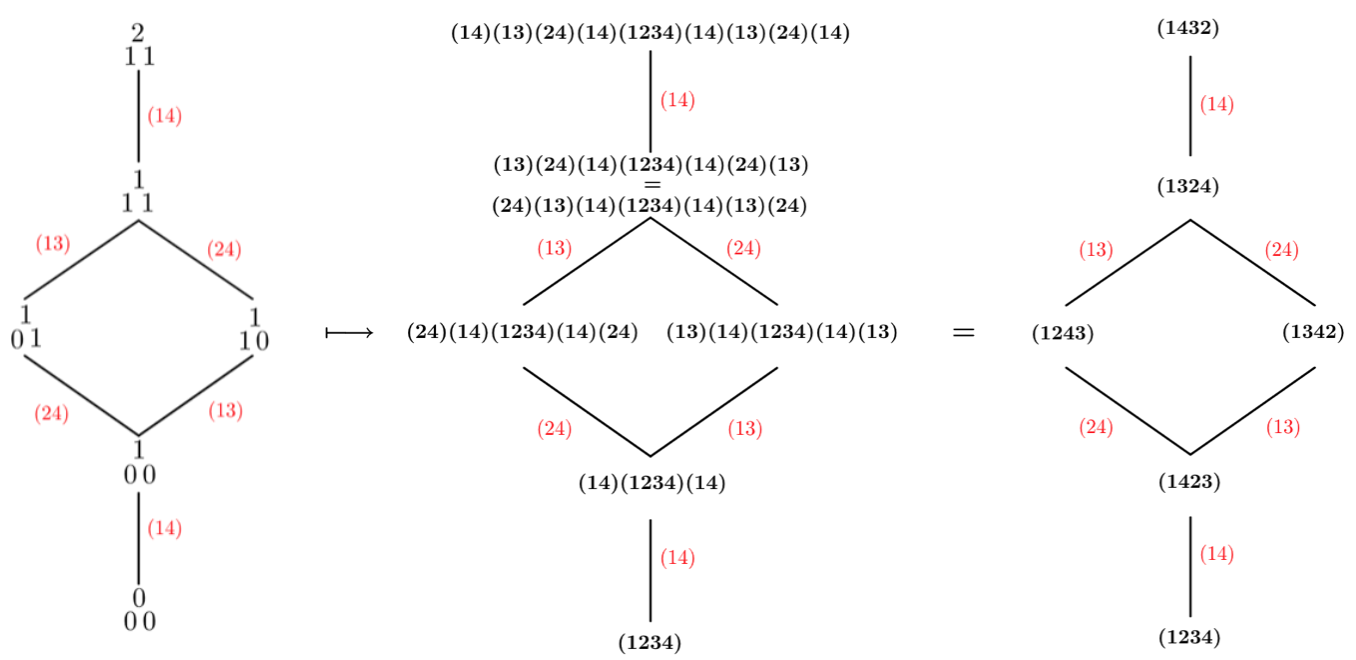} 
\end{center}
\caption{Poset isomorphism between the set of admitted vectors with $n=4$, and the set of circular permutations of $S_4$. The red labels represent, from left to right, the cover relation in the natural order on $\mathbb{N}^6$; the conjugation action; the large circular descents.}
\end{figure}


The previous statement uses implicitly the well-known result that a permutation $w\in S_n$ is completely determined by its {\em sequence of inversions} $(\gamma_{ij}(w))_{1\leq i<j\leq n}$.

In order to prove the theorem, we need the following lemmas.

\begin{lemma}\label{cover} If $u,v$ are admitted vectors such that $u<v$, then for some $1\leq i<j\leq n$, one has
$$
\begin{array}{c}
i <p<j \Rightarrow \delta_{ipj}(u)=0; \\
1\leq p<i \Rightarrow \delta_{pij}(u)=1;  \\
j<p \leq n\Rightarrow \delta_{ijp}(u)=1.
\end{array}
$$
\end{lemma}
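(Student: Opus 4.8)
The plan is to first reinterpret the three displayed conditions, and then to produce the pair $(i,j)$ by reconstructing a linear order out of the orientation data $\delta_{abc}(u)\in\{0,1\}$. My first observation is that, when $i+1<j$, the three conditions are exactly the conditions under which the vector $u+e_{ij}$, obtained from $u$ by raising only the $(i,j)$-coordinate by one, is again admitted. Indeed, replacing $u_{ij}$ by $u_{ij}+1$ affects only the triangles $\delta_{ipj}$ (for $i<p<j$), which go up by one, and the triangles $\delta_{pij}$ (for $p<i$) and $\delta_{ijp}$ (for $p>j$), which go down by one; so the inequalities (\ref{admitted}) are preserved precisely when $\delta_{ipj}(u)=0$, $\delta_{pij}(u)=1$ and $\delta_{ijp}(u)=1$, respectively. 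Thus the statement really asserts that from $u$ one can take an admissible unit step upward, and the role of the hypothesis $u<v$ will be to guarantee that this step can be taken inside $[u,v]$, that is, with $i+1<j$ and $u_{ij}<v_{ij}$.

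To locate such a pair I would reconstruct, from the binary data $\delta_{abc}(u)$, an underlying order. The cleanest incarnation is to fix $j=n$ and define a tournament $R$ on $\{1,\dots,n-1\}$ by declaring, for $a<b$, that $a\,R\,b$ if $\delta_{abn}(u)=0$ and $b\,R\,a$ if $\delta_{abn}(u)=1$. The key point is that $R$ is transitive: for $a<b<c$ the Ptolemy relation (\ref{ptolemee}) applied to $(a,b,c,n)$ gives $\delta_{acn}=\delta_{abn}+\delta_{bcn}-\delta_{abc}$, and since $\delta_{abc}\in\{0,1\}$ neither of the two oriented $3$-cycles on $\{a,b,c\}$ can occur (they would force $\delta_{abc}=-1$ or $\delta_{abc}=2$). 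A tournament without $3$-cycles is a total order, so $R$ is a linear order on $\{1,\dots,n-1\}$; let $i$ be its source, the unique element with $i\,R\,p$ for all $p\neq i$. Unravelling $R$, this says $\delta_{ipn}(u)=0$ for all $i<p<n$ and $\delta_{pin}(u)=1$ for all $p<i$, which are exactly conditions (a) and (b) with $j=n$, condition (c) being vacuous. Hence $(i,n)$ already satisfies the stated conditions.

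The hard part, and the genuine crux, is precisely this transitivity step: what makes the local orientations $\delta_{abc}(u)$ fit together into an honest order is the combination of the Ptolemy identity (\ref{ptolemee}) with the integrality constraint $\delta\in\{0,1\}$; dropping integrality re-admits the forbidden $3$-cycles. Once it is in hand the existence of a pair satisfying (a)--(c) is automatic, as above, and does not yet use $u<v$. The remaining, more delicate, point needed in the sequel is to upgrade $(i,j)$ to a genuine large circular descent lying below $v$, namely with $i+1<j$ and $u_{ij}<v_{ij}$; this is where $u<v$ enters, since it forces $u$ to differ from the top vector $\bar v_{ab}=b-a-1$, so that some coordinate with $b>a+1$ can still be raised. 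For this refinement I would select a pair $(i,j)$ in $D=\{(a,b):u_{ab}<v_{ab}\}$ with $j-i$ minimal: minimality together with the admissibility of $v$ and $u\le v$ forces $\delta_{ipj}(u)=0$ for $i<p<j$, giving condition (a) at a coordinate that is automatically large and strictly below $v$. I expect the main obstacle to lie in establishing conditions (b) and (c) at this same coordinate, since they involve triangles reaching outside the interval $[i,j]$ and are not controlled by the minimal-gap choice alone; reconciling the extremal selection of $(i,j)$ in $D$ with the order-theoretic (Ptolemy) argument that governs the outer triangles is the step that will need the most care.
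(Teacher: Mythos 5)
Your proof of the lemma as stated is correct, and it takes a genuinely different route from the paper's. Your key device --- orienting each pair $a<b$ of $\{1,\dots,n-1\}$ by the value of $\delta_{abn}(u)$, excluding directed $3$-cycles via the Ptolemy relation (\ref{ptolemee}) combined with $\delta_{abc}(u)\in\{0,1\}$, and taking the maximum $i$ of the resulting total order --- does produce a pair $(i,n)$ satisfying the three displayed conditions, and it in fact proves something stronger than the lemma: the conclusion holds for \emph{every} admitted vector $u$, always with $j=n$, the hypothesis $u<v$ never being used. The paper argues differently, using $u<v$ as a seed: a minimal pair, for inclusion of intervals $[a,b]$, in $E=\{(a,b):u_{ab}<v_{ab}\}$ is shown to have all inner deltas equal to $0$; then a maximal pair in the set $F$ of pairs with vanishing inner deltas is shown to satisfy the two outer conditions, by a Ptolemy-plus-integrality contradiction which is the same mechanism as your transitivity step. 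What the paper's route buys is proximity to what the next statement, Corollary \ref{cover-admitted}, actually needs: a pair with $i+1<j$ \emph{and} $u_{ij}<v_{ij}$, so that increasing the $(i,j)$-coordinate of $u$ by $1$ yields a vector that is admitted (Lemma \ref{monter}) and still $\leq v$. Your pair $(i,n)$ can fail both extra requirements even when $u<v$: for $n=4$, $u=(u_{13},u_{24},u_{14})=(0,1,1)$ and $v=(1,1,1)$, your tournament has source $3$, giving the pair $(3,4)$, where $j=i+1$ and $u_{34}=v_{34}$. You flag this refinement as the real remaining difficulty, and rightly so --- your minimal-gap selection in $D$ yields only the inner condition; note, though, that the paper's own step 2 (``any maximal element of $F$'') has the same defect, since $(3,4)$ is maximal in $F$ in the example above. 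The clean repair, compatible with the paper's argument, is to take a maximal element of $E\cap F$ rather than of $F$: the contradiction argument then stays inside $E$, because $\delta_{ijl}(u)=0$ and $u_{ij}<v_{ij}$ give $u_{il}=u_{ij}+u_{jl}<v_{ij}+v_{jl}\leq v_{il}$, and every pair of $E$ automatically has $i+1<j$.
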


\begin{proof}
In the whole proof we use the fact that the functions $\delta$ map $u$ and $v$ onto 0 or 1.

1. We show first that there exists a pair $i<j$ such that $\delta_{ipj}(v) =0$ for any $i<p<j$. We construct below a nonempty set $E$, whose minimal elements, for a certain order, are such pairs.

The order on the set of pairs $(i,j)$,  $i<j$, is as follows: $(i,j)\leq (i',j')$ if the interval $[i,j]$ is contained in the interval $[i',j']$ (that is, if $i'\leq i$ and $j\leq j'$).

Consider the set $E$ of pairs $(i,j)$, $i<j$, such that $u_{ij}<v_{ij}$. Note that by assumption $E$ is nonempty. Let $(i,j)
$ be a minimal element in this set, for the previous order. Then for any $(i',j')<(i,j)$, one has by minimality $u_{i',j'}
=v_{i',j'}$; in particular, for $i<p<j$, $u_{ip}=v_{ip}$ and $u_{pj}=v_{pj}$. Moreover by construction, $u_{ij}-v_{ij}\leq -1$. Since $\delta_{ipj}(v)\leq 1$, it follows that:
\begin{align*}
\delta_{ipj}(u)& =u_{ij}-u_{ip}-u_{pj} \\
					& = u_{ij}-v_{ij}+v_{ij}-v_{ip}-v_{pj} \\
					&=(u_{ij}-v_{ij})+\delta_{ipj}(v) \\
					& \leq 0.
\end{align*}
Thus $\delta_{ipj}(u)=0$.

2. We consider now that set $F$ of pairs $(i,j)$, $i<j$, such that for any $i<p<j$, one has $\delta_{ipj}(u)=0$. This set is 
nonempty by 1. Let $(i,j)$ be a maximal element in this set. We claim that for any $1\leq p<i$, one has $\delta_{pij}(u)=1$ and 
for any $n\geq p>j$, $\delta_{ijp}(u)=1$. The claim implies the lemma.

Suppose by contradiction that the claim is not true; then by symmetry of the two cases, we may assume that for some $n\geq l>j$, one has $\delta_{ijl}(u)=0$, and we choose $l$ minimum. 

Let $j<k<l$. By minimality of $l$, we have $\delta_{ijk}(u)=1$. By Eq. (\ref{ptolemee}), $\delta_{ijk}(u)+\delta_{ikl}(u)=\delta_{ijl}(u)+\delta_{jkl}(u)$. Since $\delta_{ijk}(u)=1$ and $\delta_{ijl}(u)=0$, we must have $\delta_{ikl}(u)=0$. 

Now let $i<p<j$. Then by Eq. (\ref{ptolemee}) applied to $i<p<j<l$, we have $\delta_{ipj}(u)+\delta_{ijl}(u)=\delta_{ipl}(u)+\delta_{pjl}(u)$. Since $\delta_{ijl}(u)=0$ and $\delta_{ipj}(u)=0$, we must have $\delta_{ipl}(u)=0$.

We conclude that for any $p$ with $i<p<l$, we have $\delta_{ipl}(u)=0$. Since $(i,j)<(i,l)$, this contradicts the maximality of the pair $(i,j)$.
\end{proof}

Before stating and proving the next lemma, we note that the inequalities (\ref{admitted}) characterizing admitted vectors are 
equivalent to the condition that all functions $\delta_{ijk}$ maps admitted vectors into $\{0,1\}$.

\begin{lemma}\label{monter} Let $v$ be an admitted vector, let $i<j$ and define $v'$ by $v_{ij}+1=v'_{ij}$, whereas $v_{rs}=v'_{rs}$ for the other coordinates. Then the two following conditions are equivalent:

(i) $v'$ is an admitted vector; 

(ii) the following equations hold:
\begin{equation}\label{nathan}
\begin{array}{c}
i <p<j \Rightarrow \delta_{ipj}(v)=0; \\
1\leq p<i \Rightarrow \delta_{pij}(v)=1;  \\
j<p \leq n\Rightarrow \delta_{ijp}(v)=1.
\end{array}
\end{equation}
\end{lemma}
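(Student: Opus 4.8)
The plan is to reduce admissibility of $v'$ to a pointwise condition on the functions $\delta_{abc}$, and then to track exactly which of these functions are affected by the modification. As noted just before the statement, and since $v'$ differs from $v$ only in the coordinate $(i,j)$ — which I take in $T_1$, so that $v'$ still vanishes on $S$ and stays in $\N^T$ — the vector $v'$ is admitted if and only if $\delta_{abc}(v')\in\{0,1\}$ for every triple $a<b<c$. (If instead $j=i+1$, then $v'_{i,i+1}=1$, so $v'$ is never admitted and only the case $(i,j)\in T_1$ is genuinely relevant to the equivalence.)

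Next I would observe that $\delta_{abc}(v)=v_{ac}-v_{ab}-v_{bc}$ depends on the coordinate $v_{ij}$ precisely when $(i,j)$ equals one of the three pairs $(a,c)$, $(a,b)$, $(b,c)$. Since $v'-v$ is supported on $(i,j)$, the triples whose $\delta$-value changes fall into exactly three families: those of the form $\delta_{ipj}$ with $i<p<j$ (here $(i,j)=(a,c)$, and the value goes up by $1$); those of the form $\delta_{ijp}$ with $j<p\le n$ (here $(i,j)=(a,b)$, and the value goes down by $1$); and those of the form $\delta_{pij}$ with $1\le p<i$ (here $(i,j)=(b,c)$, and the value again goes down by $1$). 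Every other $\delta_{abc}$ is unchanged, hence already lies in $\{0,1\}$ because $v$ is admitted.

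It then remains to see, for each family, when the perturbed value stays in $\{0,1\}$. For the first family, $\delta_{ipj}(v')=\delta_{ipj}(v)+1$ lies in $\{0,1\}$ iff $\delta_{ipj}(v)=0$; for the second, $\delta_{ijp}(v')=\delta_{ijp}(v)-1$ lies in $\{0,1\}$ iff $\delta_{ijp}(v)=1$; for the third, $\delta_{pij}(v')=\delta_{pij}(v)-1$ lies in $\{0,1\}$ iff $\delta_{pij}(v)=1$. In each case I exploit that $v$ is admitted, so the unperturbed value is already known to lie in $\{0,1\}$, which is exactly what pins the forced value down (e.g. $\delta_{ipj}(v)+1\in\{0,1\}$ together with $\delta_{ipj}(v)\ge 0$ forces $\delta_{ipj}(v)=0$). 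Collecting these conditions over all admissible $p$ reproduces precisely the system (\ref{nathan}), yielding (i) $\Leftrightarrow$ (ii).

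The argument is in essence bookkeeping rather than a real difficulty; the only steps demanding care are the correct enumeration of the three families of triples that meet the pair $(i,j)$ and the sign of the $\delta$-change in each, together with the (minor but necessary) observation that the equivalence genuinely requires $(i,j)\in T_1$.
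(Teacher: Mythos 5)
Your proof is correct and follows essentially the same route as the paper's: both track how the increment at the single coordinate $(i,j)$ shifts exactly the three families $\delta_{ipj}$ ($i<p<j$, up by $1$), $\delta_{pij}$ ($p<i$, down by $1$) and $\delta_{ijp}$ ($p>j$, down by $1$) while leaving every other $\delta$ unchanged, and then use the $\{0,1\}$-valuedness of the $\delta$'s on admitted vectors to force precisely the conditions (\ref{nathan}). Your explicit remark that the equivalence really requires $(i,j)\in T_1$ (otherwise $v'_{i,i+1}=1$ destroys admittedness regardless of (\ref{nathan})) is a minor point of care that the paper leaves implicit.
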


\begin{proof} Note that 
$$\begin{array}{c}
i<p<j \Rightarrow \delta_{ipj}(v)=v_{ij}-v_{ip}-v_{pj}=v'_{ij}-1-v'_{ip}-v'_{pj}=\delta_{ipj}(v')-1; \\
1\leq p<i \Rightarrow \delta_{pij}(v)=v_{pj}-v_{pi}-v_{ij}=v'_{pj}-v'_{pi}-v'_{ij}+1=\delta_{pij}(v')+1; \\
n\geq p>j \Rightarrow \delta_{ijp}(v)=\delta_{ijp}(v')+1;
\end{array}$$

In all other cases, $\delta_{rst}(v)=\delta_{rst}(v')$.

(i) implies (ii): let $i<p<j$; then by the above calculations $\delta_{ipj}(v)=\delta_{ipj}(v')-1$; since the values of $\delta$ are 0 or 1, we must have $\delta_{ipj}(v)=0$. Let $p<i$; then $\delta_{pij}(v)=\delta_{pij}(v')+1$; this forces $\delta_{pij}(v)=1$. Similarly, $p>j$ implies $\delta_{ijp}(v)=1$. Hence equations (\ref{nathan}) hold.

(ii) implies (i): It is enough to prove that  the functions $\delta$ take the values 0 or 1 when evaluated on $v'$. Let $i<p<j$; then $
\delta_{ipj}(v)=\delta_{ipj}(v')-1$, hence $\delta_{ipj}(v')=1$ by Eq.(\ref{nathan}). Let $1\leq p<i$; then $\delta_{pij}(v)=\delta_{pij}(v')+1$, 
hence $\delta_{pij}(v')=0$. Similarly, for $n\geq p>j$, $\delta_{ijp}(v')=0$. In all other cases, $\delta_{rst}(v')=\delta_{rst}(v)=0$ or $1$. This concludes 
the proof.
\end{proof}

\begin{corollary}\label{cover-admitted}
Let $u<v$ be admitted vectors.
Then there exists an admitted vector $u'$ such that $u<u'\leq v$ and that for some $1\leq i<j\leq n$ one has $u_{ij}+1=u'_{ij}$, whereas $u_{rs}=u'_{rs}$ for the other coordinates. 
\end{corollary}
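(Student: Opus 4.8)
The plan is to combine Lemma \ref{cover} with Lemma \ref{monter} in a direct way. Given admitted vectors $u<v$, Lemma \ref{cover} produces a pair $1\leq i<j\leq n$ satisfying exactly the three families of $\delta$-equations appearing in condition (ii) of Lemma \ref{monter}, namely $\delta_{ipj}(u)=0$ for $i<p<j$, $\delta_{pij}(u)=1$ for $1\leq p<i$, and $\delta_{ijp}(u)=1$ for $j<p\leq n$. These are precisely the equations (\ref{nathan}) with $v$ replaced by $u$. Therefore Lemma \ref{monter}, applied with $u$ in the role of its ``$v$'', tells us that the vector $u'$ obtained from $u$ by increasing the single coordinate $u_{ij}$ by $1$ is again an admitted vector. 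This immediately gives a covering step $u<u'$ of the required local form, so the only thing left to check is the inequality $u'\leq v$.

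To verify $u'\leq v$ I would argue as follows. Since $u'$ differs from $u$ only in the coordinate $(i,j)$, where $u'_{ij}=u_{ij}+1$, the inequality $u'\leq v$ reduces to showing $u_{ij}+1\leq v_{ij}$, i.e. $u_{ij}<v_{ij}$, all other coordinates being unchanged and already satisfying $u_{rs}\leq v_{rs}$. Thus I must exhibit the pair $(i,j)$ as one where $u$ and $v$ genuinely differ. The cleanest route is to inspect the proof of Lemma \ref{cover} more closely: the pair it constructs arises as a maximal element (for the interval-containment order) of the set $F$, and $F$ was shown to be nonempty by producing, in part 1, a minimal element of the set $E=\{(i,j): u_{ij}<v_{ij}\}$. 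I would reexamine whether the maximal pair of $F$ can be guaranteed to lie in $E$.

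The main obstacle, and the step requiring care, is exactly this last point: Lemma \ref{cover} as stated only asserts the $\delta$-equations for the output pair $(i,j)$, but does not by itself guarantee $u_{ij}<v_{ij}$. I expect that either the statement of Lemma \ref{cover} should be (or implicitly is) strengthened so that the produced pair additionally satisfies $u_{ij}<v_{ij}$, or else one argues separately that any pair satisfying equations (\ref{nathan}) for $u$ and lying below $v$ must have $u_{ij}<v_{ij}$. For the latter, I would use the $\delta$-relations together with $u\leq v$: from $\delta_{ipj}(u)=0$ and $\delta_{ipj}(v)\in\{0,1\}$ one controls how the increment propagates, and comparing the defining inequalities (\ref{admitted}) for $u$ and $v$ along the relevant triangles should force strict inequality in coordinate $(i,j)$ whenever $u\neq v$ restricted to the interval $[i,j]$. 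Concretely, since $u<v$ there is at least one coordinate where they differ; choosing the pair via the minimal element of $E$ in part 1 of Lemma \ref{cover} ties the construction to $E$, and I would thread that choice through part 2 to ensure the final maximal pair still belongs to $E$.

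Once $u_{ij}<v_{ij}$ is established, the conclusion is immediate: $u'$ is admitted by Lemma \ref{monter}, we have $u<u'$ by construction (the single coordinate strictly increased), and $u'\leq v$ since the only altered coordinate satisfies $u_{ij}+1\leq v_{ij}$. This furnishes the desired $u'$ and completes the corollary, which is precisely the statement that the order on admitted vectors is covered by single-coordinate unit increments, i.e. that such increments generate the covering relation.
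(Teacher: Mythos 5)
Your first paragraph reproduces the paper's own one-line argument, and your diagnosis afterwards is exactly right: there \emph{is} a genuine gap, both in your write-up and in the paper's proof, because Lemma \ref{cover} produces a pair $(i,j)$ satisfying equations (\ref{nathan}) but gives no control on $u_{ij}$ versus $v_{ij}$, and without $u_{ij}<v_{ij}$ one cannot conclude $u'\leq v$. However, your proposal stops at naming the obstacle (the decisive step is left as ``I expect that either \dots or else \dots''), so it is not yet a proof; worse, the first of your two suggested repairs cannot work. Take $n=5$ and the admitted vectors (all adjacent coordinates being $0$)
$$
\begin{array}{cccccc}
u_{13}=1, & u_{24}=0, & u_{35}=0, & u_{14}=1, & u_{25}=1, & u_{15}=1,\\
v_{13}=1, & v_{24}=1, & v_{35}=0, & v_{14}=1, & v_{25}=1, & v_{15}=1,
\end{array}
$$
i.e.\ the vectors of the circular permutations $(13425)$ and $(13245)$; then $u<v$ and $E=\{(2,4)\}$. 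Here the set $F$ from the proof of Lemma \ref{cover} consists of the adjacent pairs together with $(2,4)$, $(3,5)$ and $(1,5)$, so its unique maximal element is $(1,5)$: this pair satisfies all of (\ref{nathan}) (the second and third families being vacuous) and even contains the minimal element $(2,4)$ of $E$, yet $u_{15}=v_{15}$. Incrementing $u$ at $(1,5)$ gives an admitted vector incomparable with $v$. So ``(\ref{nathan}) plus $u\neq v$ somewhere on $[i,j]$'' does not force $u_{ij}<v_{ij}$, and threading a minimal element of $E$ through part 2 \emph{as that part is written} (maximality in $F$) does not help either; the only good pair in this example is $(2,4)$ itself.

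The repair is your second route, carried out with one extra inequality: run part 2 of the proof of Lemma \ref{cover} inside $E$. Set $G=E\cap F$; by part 1 of that proof, $G$ contains every minimal element of $E$, hence is nonempty. Let $(i,j)$ be maximal in $G$ for the containment order. If the conclusion (\ref{nathan}) failed for $(i,j)$, say $\delta_{ijl}(u)=0$ with $l>j$ minimal, the paper's computation via the relation (\ref{ptolemee}) gives $\delta_{ipl}(u)=0$ for all $i<p<l$, i.e.\ $(i,l)\in F$; the new observation is that $(i,l)\in E$ as well, since $u\leq v$, $u_{ij}<v_{ij}$, and the left inequality in (\ref{admitted}) for $v$ yield
$$
u_{il}=u_{ij}+u_{jl}+\delta_{ijl}(u)=u_{ij}+u_{jl}<v_{ij}+v_{jl}\leq v_{il}.
$$
This contradicts the maximality of $(i,j)$ in $G$; a failure at some $p<i$ is treated symmetrically. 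Hence a maximal element $(i,j)$ of $G$ satisfies (\ref{nathan}) \emph{and} lies in $E$. Membership in $E$ also forces $j>i+1$ (adjacent coordinates of admitted vectors vanish), which is needed because Lemma \ref{monter} is actually false for adjacent pairs; then Lemma \ref{monter} makes $u'$ admitted, and $u'_{ij}=u_{ij}+1\leq v_{ij}$ gives $u<u'\leq v$. With Lemma \ref{cover} strengthened in this way (``the pair may be chosen with $u_{ij}<v_{ij}$''), your first and last paragraphs do become a complete proof.
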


Note that this implies that necessarily $u'$ covers $u$ for the order of admitted vectors. The corollary could be given a geometric proof, in the spirit of \cite{C}, but it follows also directly from the two previous lemmas.

\begin{lemma}\label{inversionij} Let $w\in S_n$, viewed as a word. If $1\leq i<j\leq n$, then $ji$ is a factor of $w$ if and only if the following conditions hold:

(i) $\gamma_{ij}(w)=1$;

(ii) for any $i<p<j$, $\gamma_{ip}(w)+\gamma_{pj}(w)=1$;

(iii) for any $1\leq p<i$, $\gamma_{pi}(w)=\gamma_{pj}(w)$;

(iv) for any $n\geq p>j$, $\gamma_{ip}(w)=\gamma_{jp}(w)$.
\end{lemma}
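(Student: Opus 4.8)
The plan is to characterize the property ``$ji$ is a factor of $w$'' purely in terms of the inversion data $(\gamma_{rs}(w))$, which is legitimate since a permutation is determined by its inversions. The key observation is that $ji$ being a \emph{factor} (i.e.\ $j$ and $i$ are adjacent, in that order) is much stronger than $ji$ merely being a subword ($\gamma_{ij}(w)=1$); conditions (ii)--(iv) should encode exactly the statement that no letter sits strictly between $j$ and $i$ in $w$. I would first establish the forward direction: assuming $ji$ is a factor, I read off each condition by inspecting where an arbitrary letter $p$ can lie relative to the block $ji$. Since $j$ immediately precedes $i$, every other letter $p$ appears either entirely before $j$ or entirely after $i$; there is no ``in between'' position.

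For the forward implication I would argue case by case on the value of $p$. Condition (i) is immediate since $ji$ is literally a subword. For (ii), with $i<p<j$: if $p$ occurs before the block $ji$, then $p$ precedes $j$ (so $\gamma_{pj}=0$, reading $pj$ as non-inversion for $p<j$, hence $\gamma_{ip}=1$ because $p$ then also precedes $i$ giving inversion $pi$... ) --- here I must be careful with the inversion conventions, but the point is that each of the two sides of $\gamma_{ip}(w)+\gamma_{pj}(w)$ records on which side of the block $p$ lies, and since $p$ is on exactly one side, exactly one of the two relevant order-relations is an inversion, forcing the sum to equal $1$. For (iii), with $p<i<j$: whether $p$ precedes or follows the block, it stands in the same positional relation to both $j$ and to $i$ (they are adjacent), so $\gamma_{pi}(w)=\gamma_{pj}(w)$; and symmetrically (iv) for $p>j$. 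The adjacency of $j$ and $i$ is what makes ``same side'' well-defined and forces these equalities.

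For the converse I would assume (i)--(iv) and show $ji$ is a factor, arguing by contradiction: suppose $j$ and $i$ are not adjacent, so some letter $p$ lies strictly between them in $w$ (we know $j$ precedes $i$ by (i)). I would then split into the three cases $i<p<j$, $p<i$, and $p>j$ and in each derive a contradiction with the corresponding condition. For instance if $i<p<j$ sits between $j$ and $i$, then $j$ precedes $p$ and $p$ precedes $i$, so both $jp$ and $pi$ are inversions, giving $\gamma_{ip}(w)+\gamma_{pj}(w)=2\neq 1$, contradicting (ii). If $p<i$ lies between $j$ and $i$, then $p$ follows $j$ but precedes $i$, so $\gamma_{pj}(w)=1$ while $\gamma_{pi}(w)=0$, contradicting (iii); and the case $p>j$ contradicts (iv) symmetrically. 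Hence no letter lies between $j$ and $i$, so they are adjacent and $ji$ is a factor.

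The main obstacle I anticipate is purely bookkeeping: keeping the inversion convention straight, since $\gamma_{rs}$ is defined only for $r<s$ and equals $1$ when the \emph{larger} value $s$ appears before the smaller $r$. Whenever I compare a general letter $p$ to $i$ or $j$, I must translate ``$p$ appears before $j$'' into the correct $\gamma$ with the indices sorted, and the sign flips depending on whether $p$ is smaller or larger than $i,j$. The cleanest way to avoid errors is to fix, for each case, which of the two compared values is larger, write the single well-defined $\gamma$, and record its value as $0$ or $1$ according to the side of the block on which $p$ lies. Once the conventions are pinned down, every case is a one-line verification, and the contradiction in the converse is immediate.
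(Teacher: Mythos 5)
Your proposal is correct and is essentially the paper's own argument: the paper likewise reads condition (i) as ``$j$ precedes $i$'' and each of (ii)--(iv) as saying that the letter $p$ (in its respective range) does not lie between $j$ and $i$ in $w$, so that the four conditions together are equivalent to adjacency. Your two-directional organization (direct verification plus contradiction in the converse) is just a more explicit packaging of the same case analysis the paper states as a one-paragraph semantic reading of the conditions.
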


\begin{proof} Condition (i) means that $ji$ is a subword of $w$. If $1\leq p<i$, then $\gamma_{pi}(w)=\gamma_{pj}(w)=0$ 
means that $p$ is at the left of both $i$ and $j$ in $w$, and $\gamma_{pi}(w)=\gamma_{pj}(w)=1$ means that $p$ is at their right; hence condition (iii) means that $p$ is not between $i$ and $j$ in $w$. Condition (iv) is similar in the case $n\geq p>j$. Condition (ii) means similarly that for $i<p
<j$, $p$ is not between $i$ and $j$.

Thus the four conditions together mean that $ji$ is a factor of $w$.
\end{proof}

To complete the picture, we give the following dual result, whose proof may be deduced from the proof of Lemma \ref{monter}:

\begin{lemma}\label{descendre} Let $v'$ be an admitted vector, let $i<j$ and define $v$ by $v_{ij}=v'_{ij}-1$, whereas $v_{rs}=v'_{rs}$ for the other coordinates. Then the two following conditions are equivalent:

(i) $v$ is an admitted vector; 

(ii) the following equations hold:
\begin{equation}\label{nathan'}
\begin{array}{c}
i <p<j \Rightarrow \delta_{ipj}(v')=1; \\
1\leq p<i \Rightarrow \delta_{pij}(v')=0;  \\
j<p \leq n\Rightarrow \delta_{ijp}(v')=0.
\end{array}
\end{equation}
\end{lemma}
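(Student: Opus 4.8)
The plan is to prove Lemma \ref{descendre} by reducing it to the already-established Lemma \ref{monter}, since the two statements are manifestly dual: decreasing $v'_{ij}$ by one to obtain $v$ is exactly the inverse of increasing $v_{ij}$ by one to obtain $v'$. First I would observe that the setup is literally the same pair of vectors as in Lemma \ref{monter}, merely with the roles of $v$ and $v'$ swapped: given $v'$ admitted and $v$ defined by $v_{ij}=v'_{ij}-1$, we have $v_{ij}+1=v'_{ij}$, so $(v,v')$ is precisely a pair of the type considered in Lemma \ref{monter}, except that there we assume $v$ admitted and ask when $v'$ is, whereas here we assume $v'$ admitted and ask when $v$ is.

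The core computational input is the same table of $\delta$-shifts already recorded in the proof of Lemma \ref{monter}, namely
\begin{equation*}
\begin{array}{c}
i<p<j \Rightarrow \delta_{ipj}(v)=\delta_{ipj}(v')-1; \\
1\leq p<i \Rightarrow \delta_{pij}(v)=\delta_{pij}(v')+1; \\
n\geq p>j \Rightarrow \delta_{ijp}(v)=\delta_{ijp}(v')+1,
\end{array}
\end{equation*}
with $\delta_{rst}(v)=\delta_{rst}(v')$ in all remaining cases. The strategy is to read these identities in the direction that expresses $\delta(v)$ in terms of $\delta(v')$, and then impose that $v$ be admitted, i.e.\ that all $\delta_{rst}(v)\in\{0,1\}$, given that $v'$ is already admitted.

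For the direction (i) $\Rightarrow$ (ii): assuming $v$ is admitted, the first identity forces $\delta_{ipj}(v')=\delta_{ipj}(v)+1\geq 1$, hence $\delta_{ipj}(v')=1$; the second forces $\delta_{pij}(v')=\delta_{pij}(v)-1\leq 0$, hence $\delta_{pij}(v')=0$; and similarly the third gives $\delta_{ijp}(v')=0$. These are exactly the three lines of (\ref{nathan'}). For the converse (ii) $\Rightarrow$ (i): assuming (\ref{nathan'}), it suffices (as in Lemma \ref{monter}) to check that every $\delta_{rst}(v)$ lies in $\{0,1\}$. For $i<p<j$ the first identity gives $\delta_{ipj}(v)=\delta_{ipj}(v')-1=0$; for $1\leq p<i$ one gets $\delta_{pij}(v)=\delta_{pij}(v')+1=1$; for $n\geq p>j$ one gets $\delta_{ijp}(v)=1$; and in all other cases $\delta_{rst}(v)=\delta_{rst}(v')\in\{0,1\}$ since $v'$ is admitted. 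Thus $v$ is admitted.

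I do not expect a serious obstacle here: the content of the lemma is entirely carried by the shift table, which is symmetric in the sense of being an involution on the pair $(v,v')$, so the argument is a near-verbatim transcription of the proof of Lemma \ref{monter} with inequalities reversed. The only point requiring a moment's care is the bookkeeping that the \emph{same} three families of triples $(i,p,j)$, $(p,i,j)$, $(i,j,p)$ exhaust all triples whose $\delta$-value actually changes between $v$ and $v'$ — this is precisely the claim that $\delta_{rst}$ is unchanged in all other cases, which is already justified in Lemma \ref{monter} and may simply be invoked.
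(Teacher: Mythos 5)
Your proof is correct and takes exactly the route the paper intends: the paper gives no separate argument for Lemma \ref{descendre}, saying only that it ``may be deduced from the proof of Lemma \ref{monter},'' and your dualization via the $\delta$-shift table, with the two directions re-run with the inequalities reversed, is precisely that deduction. Note only that, as in the paper's own Lemma \ref{monter}, the reduction of admittedness to ``all $\delta_{rst}\in\{0,1\}$'' implicitly uses $j>i+1$ (so that the coordinates $v_{k,k+1}$ remain $0$), a convention you inherit from the paper rather than a gap of your own.
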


\begin{proof}[Proof of theorem \ref{iso}] 1. By Proposition \ref{V-0}, $V$ is well-defined on the set of circular permutations. 

2. We show that, for $w=1w'$, and $V_0(w)=v$, for any $1\leq i<j\leq n$, one has $\gamma_{ij}(w)=-v_{ij}+v_{1j}-v_{1i}=\delta_{1ij}(v)$. 

In the case $i\neq 1$, we have indeed 
\begin{align*}
\delta_{1ij}(v) =& -v_{ij}+v_{1j}-v_{1i} \\
 = &~\gamma_{ij}(w)-\sum_{i\leq k<j}\gamma_{k,k+1}(w)
-\gamma_{1j}(w)+\sum_{1\leq k<j}\gamma_{k,k+1}(w)
+\gamma_{1i}(w) \\ 
& ~ - \sum_{1\leq k<i}\gamma_{k,k+1}(w).
\end{align*}

 The three summations cancel, and $\gamma_{1j}(w)=0=\gamma_{1i}(w)$, since $w=1w'$. Thus the previous sum is equal to $\gamma_{ij}(w)$. In case $i=1$, one has $$\delta_{1ij}(v)=-v_{1j}+v_{1j}-v_{11}=0=\gamma_{1j}(w).$$

3. This implies that the mapping $V$ is injective, and proves the formulas of the statement.

4. In order to prove surjectivity, let $v$ be an admitted vector. Note that by the inequalities (\ref{admitted}), for $i<j$ the numbers  $-v_{ij}+v_{1j}-v_{1i}=\delta_{1ij}(v)$ 
are equal to 0 or 1.
Consider the set $I$ of pairs $(i,j)$ such that $1<i<j\leq n$ and $\delta_{1ij}(v)=1$. We show that this set is the {\em set of inversions by value} of some permutation $w$ (meaning that I is the set of $(i,j)$, $i<j$, such that $ji$ is a subword of $w$). 

Recall that a set $I$ of pairs $(i,j)$, $i<j$, is the set of inversions by value of a permutation if and only one has the two conditions:
(i) $(i,j)$ and $(j,k) \in I$ implies $(i,k)\in I$; and (ii) if $i<j<k$ and $(i,k)\in I$, then $(i,j)$ or $(j,k)$ is in $I$.

Therefore with $I$ as above, it is enough to prove (i) and (ii).

By Eq.(\ref{ptolemee}), we have (*) $\delta_{1ij}(v)+\delta_{1jk}(v)=\delta_{1ik}(v)(v)+\delta_{ijk}(v)$, and $\delta_{ijk}(v)=0$ or $1$. Therefore, (i) and (ii) 
easily follow. 

5. For the permutation $w$ constructed in 4., its set of inversions by value is $I=\{(i,j),1<i<j\leq n, \delta_{1ij}(v)=1\}$. Therefore $\gamma_{ij}(w)=-v_{ij}+v_{1j}-v_{1i}$ for any $1\leq i<j\leq 1$ (since it is also true for $i=1$, because by definition of $I$, there is no inversion $1j$).
We show that $V(w)=v$. We have indeed 
$$
-\gamma_{ij}(w)+\sum\limits_{i\leq k<j}\gamma_{k,k+1}(w)=v_{ij}-v_{1j}+v_{1i}+\sum\limits_{i\leq k<j}(-v_{k,k+1}+v_{1,k+1}-v_{1,k})=v_{ij},
$$
since $v_{k,k+1}=0$ and since the summation is a telescoping sum.

6. We have thus proved that $V$ is a bijection from the set of circular permutations onto the set of admitted vectors. We show now that $V$ is increasing. It is enough to show that if the permutation-word $w$ has the circular factor $ji$ with $j>i+1$ and if $w'$ is obtained from $w$ by exchanging $i$ and $j$, then $v'=V_0(w')$ is obtained by increasing by 1 the component $ij$ of $v=V_0(w)$. Since $V_0$ is invariant under conjugation of $w$, we may assume that $ji$ is a factor of $w$.

We have 
$$
v_{ij}=-\gamma_{ij}(w)+\sum\limits_{i\leq k<j}\gamma_{k,k+1}(w),
$$ 
and 
$$
v'_{ij}=-\gamma_{ij}(w')+\sum\limits_{i\leq k<j}\gamma_{k,k+1}(w').
$$

Thus $v_{ij}+1=v'_{ij}$, since the values of $\gamma_{k,k+1}$ are the same for $w$ and $w'$ and since the subword $ji$ disappears from $w$ to $w'$. Let now $l>k+1$ and $(k,l)\neq (i,j)$. Similar arguments then show that $v_{kl}=v'_{kl}$.

7. To conclude, we must show that if $v=V_0(w)$, $v'=V_0(w')$ and $v<v'$, then $(w)<(w')$ for the order of circular permutations. In view of Corollary \ref{cover-admitted}, we are reduced to the case where $v$ and $v'$ differ by 1 on some coordinate $ij$: $v'_{ij}=v_{ij}+1$ and $v_{kl}=v'_{kl}$ if $(k,l)\neq (i,j)$.

We may assume that $w=1u$, and we show that $ji$ is a circular factor of $w$, which will suffice, by definition of the order on circular permutations and by 6. 

Suppose that $i>1$. We apply Lemma \ref{inversionij} to show that $ji$ is a factor of $w$. We apply below 
several times Eq.(\ref{ptolemee}), Lemma \ref{monter} and 2.

(i) We have $\gamma_{ij}(w)=\delta_{1ij}(v)=1$. 

(ii) Let $i<p<j$. Then we have
$$
\gamma_{ip}(w)+\gamma_{pj}(w)=\delta_{1ip}(v)+\delta_{1pj}(v)=\delta_{1ij}(v)+\delta_{ipj}(v)=1.
$$

(iii) Let $p<i$. Then 
$$
\gamma_{pi}(w)-\gamma_{pj}(w)=\delta_{1pi}(v)-\delta_{1pj}(v)=-\delta_{1ij}(v)+\delta_{pij}(v)=0.
$$ 

(iv) is similar. We conclude that $ji$ is a factor of $w$.

Suppose now that $i=1$. Then we have to show that $j1$ is a circular factor of $w=1u$, which means that $j$ is the 
last letter of $w$. Thus we must prove that for $1<p<j$, $jp$ is not a subword of $w$, that is $\gamma_{pj}(w)=0$; and if $p>j$, that $jp$ is a not a subword, that is, $pj$ is one, equivalently $\gamma_{jp}(w)=1$. But we have, using Lemma \ref{monter}:
if $p<j$, $\gamma_{pj}(w)=\delta_{1pj}(v)=0$; and if $p>j$, $\gamma_{jp}(w)=\delta_{1jp}(v)=1$, which concludes the proof.
\end{proof}

The proof implies the following result, which is illustrated by the labels of the edges in the Hasse diagram of Figures \ref{PyraA4} and \ref{PyraA5}. 

\begin{corollary}\label{label} Let $\sigma,\sigma'$ be circular permutations and $v,v'$ be the corresponding admitted vectors. Then $\sigma'$ covers $\sigma$ if and only if $v'$ covers $v$. Moreover the following conditions are equivalent, for $1\leq r, r+1<s\leq n$:

(i) $\sigma'$ is obtained from $\sigma$ by replacing the circular factor $sr$ by $rs$;

(ii) $v'_{rs}=v_{rs}+1$.
\end{corollary}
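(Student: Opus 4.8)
The plan is to obtain the corollary by reading it off from the proof of Theorem \ref{iso}: the isomorphism $V$ and the index-level analysis carried out in that proof already contain everything needed, and the remaining task is to isolate the relevant pieces (essentially steps 6 and 7 of that proof) and to match indices. First I would dispose of the opening assertion. Since $V$ is an isomorphism of posets by Theorem \ref{iso}, it is an order-preserving bijection whose inverse is also order-preserving, and such a map both preserves and reflects the covering relation. Consequently $\sigma'$ covers $\sigma$ precisely when $v'=V(\sigma')$ covers $v=V(\sigma)$, and there is nothing further to argue here.

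For the equivalence of (i) and (ii) I would work under the standing hypothesis that $\sigma'$ covers $\sigma$ (equivalently, by the first part, that $v'$ covers $v$); note that condition (i) already forces this via Proposition \ref{Nprop}(ii), and that whenever $v'$ covers $v$, Corollary \ref{cover-admitted} guarantees that $v'$ and $v$ differ by $+1$ in exactly one coordinate. To prove (i) $\Rightarrow$ (ii) I would invoke the computation of step 6 of the proof of Theorem \ref{iso}. Using the conjugation-invariance of $V_0$ (Proposition \ref{V-0}), I choose the representative word $w$ of $\sigma$ in which $sr$ occurs as a genuine linear factor and let $w'$ be obtained by exchanging $r$ and $s$. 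Because $s>r+1$, none of the adjacent quantities $\gamma_{k,k+1}$ are affected, whereas the single subword $sr$ of $w$ disappears; the defining formula $v_{ij}=-\gamma_{ij}(w)+\sum_{i\le k<j}\gamma_{k,k+1}(w)$ then yields $v'_{rs}=v_{rs}+1$ and $v'_{kl}=v_{kl}$ for every other pair, which is exactly (ii).

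For (ii) $\Rightarrow$ (i) I would run step 7 of the same proof. Since $v'$ covers $v$ and they differ by $+1$ at the coordinate $(r,s)$, Lemma \ref{monter} applies: the admissibility of $v'$ is equivalent to the equations (\ref{nathan}) holding for $v$ at $(r,s)$. Taking the representative $w=1u$ of $\sigma$ and translating through the identity $\gamma_{ij}(w)=\delta_{1ij}(v)$ supplied by Theorem \ref{iso}, together with the Ptolemy relation (\ref{ptolemee}), I would convert these equations, in the case $r>1$, into conditions (i)--(iv) of Lemma \ref{inversionij}, concluding that $sr$ is a circular factor of $w$; the boundary case $r=1$ is handled by the separate direct argument of step 7, showing that $s$ must be the last letter of $w=1u$, so that $s1$ is a circular factor. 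Replacing $sr$ by $rs$ then produces $\sigma'$ by the step 6 computation already used, giving (i).

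The hard part, such as it is, lies in the (ii) $\Rightarrow$ (i) direction: one must upgrade the information that $sr$ is merely an inversion (a \emph{subword}) of $w$ to the much stronger statement that it is an adjacent circular \emph{factor}. This is precisely where the three implications of (\ref{nathan}), transported to the $\gamma$'s via $\delta_{1ij}$, are needed in full to meet all four hypotheses of Lemma \ref{inversionij}, and where the separate treatment of $r=1$ becomes necessary. Everything else is bookkeeping, namely matching the coordinate $(r,s)$ that increments to the transposition labelling the covering edge, which the index conventions above render transparent.
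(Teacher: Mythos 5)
Your proposal is correct and takes essentially the same route as the paper: the paper justifies this corollary with the single remark that ``the proof [of Theorem \ref{iso}] implies the following result,'' and your argument simply makes that explicit, using the poset-isomorphism property of $V$ for the covering statement and steps 6 and 7 of that proof (via Corollary \ref{cover-admitted}, Lemma \ref{monter}, and Lemma \ref{inversionij}) for the equivalence of (i) and (ii). No gaps; this is exactly what the paper intends.
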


In the next result, we transfer to the poset of admitted vectors the anti-automorphisms of the poset of circular permutations, as they appear in Corollary \ref{inversion}.

\begin{corollary}\label{anti-autom} 1. The anti-automorphism of the poset of admitted vectors, corresponding to 
inversion 
in the poset of circular permutations, is defined by $ u\mapsto v$, with $v_{ij}=j-i-1-u_{ij}$.

2. The anti-automorphism of the set of admitted vectors, corresponding to conjugation by the longest permutation 
$w_0$, is defined by $u\mapsto v$, with $v_{ij}=j-i-1-u_{n+1-j,n+1-i}$.
\end{corollary}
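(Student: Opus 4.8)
The plan is to transport the two anti-automorphisms of Corollary \ref{inversion} through the isomorphism $V$ of Theorem \ref{iso} and simply read off the resulting coordinate formulas on admitted vectors. Since Corollary \ref{inversion} already guarantees that inversion and conjugation by $w_0$ are anti-automorphisms of the poset of circular permutations, and $V$ is a poset isomorphism, the induced maps $u\mapsto v$ on admitted vectors are automatically anti-automorphisms; in particular the image $v$ is automatically an admitted vector, so nothing needs to be checked on that front. The only remaining task is to compute the formulas, and for this I would work directly with the defining expression $V_0(w)_{ij}=-\gamma_{ij}(w)+\sum_{i\leq k<j}\gamma_{k,k+1}(w)$ and track how $\gamma$ behaves under the word operations realizing the two anti-automorphisms.

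For part 1, I use the fact recalled in the proof of Corollary \ref{inversion} that the inverse of $(w)$ is $(\tilde w)$, where $\tilde w$ is the reversal of $w$. Reversal exchanges the relative order of any two letters, so $\gamma_{ij}(\tilde w)=1-\gamma_{ij}(w)$ for all $i<j$, and in particular $\gamma_{k,k+1}(\tilde w)=1-\gamma_{k,k+1}(w)$. Substituting these into the formula for $V_0(\tilde w)_{ij}$ and using $\sum_{i\leq k<j}1=j-i$, the computation collapses to $v_{ij}=(j-i-1)-u_{ij}$, which is the claimed formula.

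For part 2, I use that conjugation by $w_0$ sends a cycle $(a_1\cdots a_n)$ to $(w_0(a_1)\cdots w_0(a_n))$; that is, on words it is realized by the relabeling $\bar w$ that replaces each letter $k$ by $n+1-k$ (consistent with the remark in the proof of Corollary \ref{inversion} that a cyclic factor $rs$ becomes $(n+1-r)(n+1-s)$). This relabeling reverses the value-order of any two letters, so $\gamma_{ij}(\bar w)=1-\gamma_{n+1-j,\,n+1-i}(w)$, and in particular $\gamma_{k,k+1}(\bar w)=1-\gamma_{n-k,\,n+1-k}(w)$. Plugging these into $V_0(\bar w)_{ij}$, reindexing the consecutive sum by $m=n-k$ (so that $i\leq k<j$ becomes $n+1-j\leq m<n+1-i$) and setting $i'=n+1-j$, $j'=n+1-i$, the expression simplifies to $v_{ij}=(j'-i'-1)-u_{i'j'}=(j-i-1)-u_{n+1-j,\,n+1-i}$, as claimed.

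The computations themselves are routine; the care needed is concentrated in the two structural facts about $\gamma$ — that both reversal and the value-relabeling $k\mapsto n+1-k$ flip every inversion — and, in part 2, in getting the substitution $m=n-k$ and the resulting summation range exactly right, since that is the only place an off-by-one or reversed index could slip in. Once those are pinned down, both formulas drop out of the telescoping structure already present in the definition of $V_0$.
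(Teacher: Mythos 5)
Your proposal is correct and follows essentially the same route as the paper: both realize the two anti-automorphisms on words (reversal for inversion, the complementation $k\mapsto n+1-k$ for conjugation by $w_0$), use the inversion flips $\gamma_{ij}(\tilde w)=1-\gamma_{ij}(w)$ and $\gamma_{ij}(\bar w)=1-\gamma_{n+1-j,\,n+1-i}(w)$, and substitute into the defining formula for $V_0$ with the same reindexing of the consecutive sum. Your added remark that the induced maps are automatically anti-automorphisms of the admitted-vector poset (so one need not verify admissibility of the image) is a point the paper leaves implicit, but otherwise the arguments coincide.
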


Note that the product of the two previous anti-automorphisms of the poset is the automorphism $u\to v$ with $v_{ij}
=u_{n+1-j,n+1-i}$. 

\begin{proof} 1. Inversion of circular permutations is the mapping $(w)\mapsto (\tilde w)$; the corresponding admitted vectors are respectively $u$ and $v$ 
with

$$
u_{ij}=-\gamma_{ij}(w)+\sum_{i\leq k<j}\gamma_{k,k+1}(w),
$$
 and 
 $$
 v_{ij}=-\gamma_{ij}(\tilde w)+\sum\limits_{i\leq k<j}\gamma_{k,k+1}(\tilde w).
 $$
 Since $\gamma_{rs}(\tilde w)=1-\gamma_{rs}(w)$, we have 
$$
v_{ij}=-(1-\gamma_{ij}(w))+\sum\limits_{i\leq k<j}(1-\gamma_{k,k+1}(w))=-u_{ij}-1+j-i.
$$

2. This anti-automorphism is defined, for circular permutations, by $(w)\mapsto (w')$, where $w'$ is obtained by replacing 
in $w$ each letter by its {\em complement} $n+1-k$. Hence $\gamma_{ij}(w')=1-\gamma_{n+1-j,n+1-i}(w)$. Therefore, denoting by $u,v$ respectively the admitted vectors corresponding to $(w)$ and $(w')$, we have 
\begin{align*}
v_{ij} & = -\gamma_{ij}(w')+\sum_{i\leq k<j}\gamma_{k,k+1}(w') \\
	    & = -(1-\gamma_{n+1-j,n+1-i}(w))+\sum\limits_{i\leq k<j}(1-\gamma_{n+1-
k-1,n+1-k}(w)) \\
        & = -1+j-i-(-\gamma_{n+1-j,n+1-i}(w)+\sum\limits_{l=n+1-j}^{n+1-i}\gamma_{l,l+1}(w)) \\
        & = j-i-1-u_{n+1-j,n+1-i}.
\end{align*}
\end{proof}

\begin{corollary}\label{largest} The smallest admitted vector is the null vector, and the largest one, $w$ say, satisfies $w_{ij}=j-
i-1$; moreover, $\delta_{ijk}(w)=1$ for any $i<j<k$.
\end{corollary}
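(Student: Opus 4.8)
The plan is to transport the two extremal elements of the poset through the isomorphism $V$ of Theorem \ref{iso}, exploiting the fact (Corollary \ref{order1}) that $(12\cdots n)$ and $(n\cdots 21)$ are respectively the smallest and the largest circular permutations. Since $V$ is a poset isomorphism, $V((12\cdots n))$ must be the smallest admitted vector and $V((n\cdots 21))$ the largest, so everything reduces to computing $V_0$ on these two words.

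First I would treat the smallest vector. The word $12\cdots n$ is increasing, hence has no inversions by value: $\gamma_{ij}(12\cdots n)=0$ for all $i<j$. Substituting into the defining formula of $V_0$ gives $v_{ij}=0$ for every $(i,j)$, so the smallest admitted vector is the null vector. Alternatively, one checks directly that the null vector satisfies the inequalities (\ref{admitted}), and since every admitted vector is nonnegative by Proposition \ref{V-0}, the null vector lies componentwise below all of them; this is what makes ``smallest'' legitimate inside the poset.

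Next I would compute the largest vector $w=V_0(n\cdots 21)$. The word $n\cdots 21$ is decreasing, so every pair is an inversion: $\gamma_{ij}(n\cdots 21)=1$ for all $i<j$. The formula for $V_0$ then yields
$$
w_{ij}=-1+\sum_{i\le k<j}1=-1+(j-i)=j-i-1,
$$
as claimed. A shorter route is to invoke Corollary \ref{anti-autom}(1): the anti-automorphism sends $u\mapsto v$ with $v_{ij}=j-i-1-u_{ij}$, and being an anti-automorphism it carries the minimum to the maximum, so applying it to the null vector gives $w_{ij}=j-i-1-0=j-i-1$ immediately.

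Finally, the last assertion follows by direct substitution:
$$
\delta_{ijk}(w)=w_{ik}-w_{ij}-w_{jk}=(k-i-1)-(j-i-1)-(k-j-1)=1
$$
for all $i<j<k$. I expect no genuine obstacle here, since the real content lives in the already-established isomorphism of Theorem \ref{iso} and in the nonnegativity of admitted vectors from Proposition \ref{V-0}; the only points demanding care are recording that the null vector is genuinely admitted and that the extremal permutations of Corollary \ref{order1} are indeed the ones mapped to these vectors under $V$.
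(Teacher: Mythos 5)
Your proof is correct, but your primary route differs from the paper's. You transport the extremal circular permutations $(12\cdots n)$ and $(n\cdots 21)$ of Corollary \ref{order1} through the isomorphism $V$ of Theorem \ref{iso}, and then evaluate $V_0$ explicitly on the two extremal words via the inversion formula $v_{ij}=-\gamma_{ij}(w)+\sum_{i\leq k<j}\gamma_{k,k+1}(w)$, using $\gamma_{ij}\equiv 0$ for the increasing word and $\gamma_{ij}\equiv 1$ for the decreasing word. The paper instead stays entirely inside the vector poset: the null vector is trivially admitted and, since admitted vectors live in $\N^T$ with the componentwise order, it is trivially the minimum; then the anti-automorphism $u\mapsto(j-i-1-u_{ij})$ of Corollary \ref{anti-autom}, which must exchange minimum and maximum, delivers the largest vector at once --- this is exactly the ``shorter route'' you mention as an aside. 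What your main argument buys is a concrete identification of which circular permutations sit at the bottom and top of the vector poset, at the cost of invoking the full strength of Theorem \ref{iso}; the paper's argument is more economical for the statement as written. One small citation slip: the nonnegativity of admitted vectors is part of their definition (they are elements of $\N^T$), not a consequence of Proposition \ref{V-0}, which only establishes that vectors of the form $V_0(w)$ are admitted. Your final computation of $\delta_{ijk}(w)=(k-i-1)-(j-i-1)-(k-j-1)=1$ coincides with the paper's.
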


\begin{proof} Clearly, the null vector is admitted and is the smallest element in the poset of admitted vectors. Using Corollary \ref{anti-autom}, we see that $w$ as defined in the statement is the largest element. Now 
\begin{align*}
\delta_{ijk}(w)& = w_{ik}-w_{ij}-w_{jk} \\
					& = k-i-1-(j-i-1)-(k-j-1) \\
					& = 1.
\end{align*}
\end{proof}

\begin{proof}[Proof of Proposition \ref{path}]
1. It is enough to prove this independence result when $\sigma=(12\cdots n)$. Consider a path from $\sigma$ to $
\tau$, whose successive edges are labelled $(r_1,s_1),\ldots, (r_p,s_p)$, with $r_i+1<s_i$, and let $\sigma=(w_0),(w_1), 
\ldots,(w_{p-1}),(w_{p})=\tau$ the successive vertices in the path, written as $n$-cycles, so that each $w_i$ is a word. 
Then $s_ir_i$ is a circular factor of the word $
w_{i-1}$, and $w_i$ is obtained from the latter word by exchanging $r_i$ and $s_i$ in it.

Define the permutations $\alpha_i=(r_i,s_i)\circ\cdots\circ(r_1,s_1)$. Then $(w_i)=\alpha_i\circ\sigma\circ\alpha_i^{-1}$. It follows that, in cycle notation, $
(w_i)=(\alpha_i(1)\cdots\alpha_i(n))$ and therefore $s_{i+1}r_{i+1}$, being a circular factor of $(w_{i})$, is a circular factor of 
the word $\alpha_i$. We have $\alpha_{i+1}=(r_{i+1},s_{i+1})\circ\alpha_i$ and thus, as words, $\alpha_{i+1}$ is obtained from $\alpha_i$ by 
exchanging $r_{i+1}$ and $s_{i+1}$, whose positions moreover must differ by 1 in both words (modulo $n$). Hence:
$$
 \left\{
                          	\begin{array}{rl}
 						\alpha_{i+1}^{-1}(r_{i+1})\equiv\alpha_i^{-1}(r_{i+1})-1\mod n \\
 						\alpha_{i+1}^{-1}(s_{i+1})\equiv\alpha_i^{-1}(s_{i+1})+1\mod n \\
  				      	\alpha_{i+1}^{-1}(j)=\alpha_i^{-1}(j) ~~\text{if}~~j \neq r_{i+1}, s_{i+1}.\\
					    \end{array}
					    \right.
$$

It follows by induction that for any $j=1,\ldots,n$, the position of $j$ in the word $\alpha_{i+1}$ is congruent modulo $n$ to: $j$ {\em plus} the 
number of $(rs)$ on the path with $s=j$ {\em minus} the number of $(rs)$ on the path with $r=j$. From Corollary \ref{label}, it follows that it is congruent modulo $n$ to:
$$
j+\sum_{r<s=j}v_{rs}-\sum_{j=r<s}v_{rs}
$$
 where $v$ is the admitted vector associated to the circular permutation $(w_{i+1})$.

Therefore each $\alpha_{i+1}$ depends only on $v$, which depends only  on $(w_{i+1})$; thus $\alpha=\alpha_{p}$ depends only on $\tau$.

2. It remains to determine $\alpha$ for a maximal path in the poset. The previous calculations and Corollary 
\ref{largest} show that $
\alpha^{-1}(j)$ is congruent modulo $n$ to $j+\sum\limits_{r<j}(j-r-1)-\sum\limits_{j<s}(s-j-1)$. Moreover, a direct computation shows that:
\begin{align*}
&j+\sum_{r<j}(j-r-1)-\sum_{j<s}(s-j-1)  \\
 = &~j+((j-2)+\cdots+1+0)-(0+1+\cdots+(n-j-1))\\
 =  &~j+\frac{(j-2)(j-1)}{2}-\frac{(n-j-1)(n-j)}{2}.
\end{align*}

 The cases $j=1$ and $j=n$ have to be treated separately, but the final formula is correct in these cases as well. 

Suppose that $n=2k+1$. Then using the fact that the inverse of $2 \mod n$ is $-k$, we find that $\alpha^{-1}(j)$ is 
congruent modulo $n$ to 
$j-k(j-2)(j-1)+kj(j+1)$. Hence, omitting the calculations, to $n+1-j$. Thus, as words, $\alpha^{-1}=n\cdots21$, and $\alpha=\alpha^{-1}$, since it is an 
involution.

Suppose now that $n=2k$. Then a calculation left to the reader shows that modulo $n$, $\alpha^{-1}(j)$ is congruent to  $k+1-j$. Hence, as word, $\alpha^{-1}=(n/2)\cdots 21 n\cdots (1+n/2)$ and $\alpha=\alpha^{-1}$ 
since it is an involution.
\end{proof}

\section{Properties of the poset}

\subsection{Lattice}

Recall that the the set of admitted vectors in $\N^T$ has been defined if the previous section.

\begin{theorem}\label{lattice} The poset of admitted vectors (or equivalently the poset of circular permutations) is a lattice.
\end{theorem}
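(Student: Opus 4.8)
The plan is to construct the join (supremum) of two admitted vectors $u,v$ by an explicit recursion, to verify that it is admitted and is the \emph{least} admitted vector above both, and then to obtain meets for free. Since the poset has a smallest element (the null vector, Corollary \ref{largest}), a finite poset in which every pair has a join automatically has all meets as well: the meet of $a,b$ is the join of the (nonempty, finite) set of their common lower bounds. Thus establishing joins suffices for the lattice property; and because the order-reversing anti-automorphism $u\mapsto\bar u$ with $\bar u_{ij}=j-i-1-u_{ij}$ of Corollary \ref{anti-autom} interchanges joins and meets, applying it to the join algorithm yields the explicit dual algorithm for the infimum (maxima replaced by minima, and $w_{ij}+w_{jk}$ by $m_{ij}+m_{jk}+1$), which is the algorithmic construction promised for both operations.

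First I would record the two observations that dictate the shape of the construction. If $m$ denotes the componentwise maximum $m_{ij}=\max(u_{ij},v_{ij})$, then for every $i<j<k$ one has $\delta_{ijk}(m)\le 1$: indeed, if $m_{ik}=u_{ik}$ then $\delta_{ijk}(m)\le u_{ik}-u_{ij}-u_{jk}=\delta_{ijk}(u)\le 1$, and symmetrically. Dually the componentwise minimum always satisfies $\delta_{ijk}\ge 0$. So the componentwise max (resp.\ min) can only violate the lower (resp.\ upper) half of (\ref{admitted}); an explicit small example (taking $n=5$, one checks that the componentwise max of two admitted vectors can have some $\delta_{ijk}=-1$) shows this really does happen, so the natural remedy is to \emph{raise} the max just enough to restore lower admissibility $\delta\ge 0$.

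Accordingly I would define $w$ by induction on $k-i$:
\begin{equation*}
w_{i,i+1}=0,\qquad w_{ik}=\max\Big(\max(u_{ik},v_{ik}),\ \max_{i<j<k}\big(w_{ij}+w_{jk}\big)\Big).
\end{equation*}
By construction $w\ge u$, $w\ge v$, and $w_{ik}\ge w_{ij}+w_{jk}$ for every split, i.e.\ $\delta_{ijk}(w)\ge 0$. Minimality is then immediate: if $t$ is any admitted vector with $t\ge u,v$, then $t_{ik}\ge\max(u_{ik},v_{ik})$ and, using lower admissibility of $t$ together with the inductive hypothesis $t\ge w$ on shorter intervals, $t_{ik}\ge t_{ij}+t_{jk}\ge w_{ij}+w_{jk}$ for all $j$; hence $t_{ik}\ge w_{ik}$, so $t\ge w$.

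The real work, and the step I expect to be the main obstacle, is showing that $w$ is admitted, namely $w_{ik}\le w_{ij}+w_{jk}+1$ for \emph{every} split $j$, not merely for the one realizing the maximum. The contribution of $\max(u_{ik},v_{ik})$ is harmless, since $u_{ik}\le u_{ij}+u_{jk}+1\le w_{ij}+w_{jk}+1$. The delicate case is when $w_{ik}=w_{ip}+w_{pk}$ for some maximizing $p\neq j$, which must be compared with the split at $j$. I would argue by induction on $k-i$, so that $w$ is already known to be admitted on every interval strictly shorter than $[i,k]$. For instance when $i<p<j<k$, admissibility of the shorter intervals $[p,k]$ and $[i,j]$ gives $w_{pk}\le w_{pj}+w_{jk}+1$ and $w_{ij}\ge w_{ip}+w_{pj}$, whence $w_{ip}+w_{pk}\le w_{ij}+w_{jk}+1$; the case $j<p$ is symmetric. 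This closes the induction, proving $w$ admitted, so $w$ is the join. The lattice property follows, and the dual recursion obtained through Corollary \ref{anti-autom} computes the meet.
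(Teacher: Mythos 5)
Your proof is correct, and its core --- the recursion $w_{i,i+1}=0$, $w_{ik}=\max\{u_{ik},\,v_{ik},\,w_{ip}+w_{pk} : i<p<k\}$, the induction showing any admitted upper bound dominates $w$, and the passage to meets via the anti-automorphism of Corollary \ref{anti-autom} --- coincides exactly with the paper's proof of Theorem \ref{lattice}. The one genuine divergence is the step you correctly single out as the real work: proving the upper half of the inequalities (\ref{admitted}) for $w$. The paper proves a standalone lemma (stated just before the theorem): if all components $v_{rs}$ with $s-r<j-i$ of a vector are admitted, then the sums $v_{ip}+v_{pj}$, $i<p<j$, take at most two values, which are consecutive. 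It then splits into two cases: when the defining maximum is attained by some sum $w_{ip}+w_{pk}$, the lemma shows every other sum $w_{iq}+w_{qk}$ lies within $1$ of it, giving admissibility; when the maximum is attained strictly by $u_{ik}$ or $v_{ik}$, it argues as you do. Your argument eliminates the auxiliary lemma entirely: for $i<p<j<k$ you chain the induction hypothesis (upper admissibility on the strictly shorter interval $[p,k]$, i.e.\ $w_{pk}\le w_{pj}+w_{jk}+1$) with the lower inequality $w_{ij}\ge w_{ip}+w_{pj}$ that holds by construction, and symmetrically for $j<p$. Both routes are sound and of comparable total length; yours is more self-contained, while the paper's lemma isolates a reusable structural fact about ``partially admitted'' vectors. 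Your additional remark that meets exist abstractly (a finite poset with bottom element and binary joins has all meets) is also valid but redundant, since the anti-automorphism --- used by both you and the paper --- moreover yields the explicit dual recursion $\min\{u_{ik},v_{ik},w_{ip}+w_{pk}+1\}$ for the infimum, matching the formula stated after the paper's proof.
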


We give a combinatorial proof of this result, with a simple algorithmic construction of the supremum and the infimum. This result may also be deduced from Theorem \ref{interval}: see the comment before Corollary \ref{semi-distrib}.



We extend the definition of admitted vector as follows: if $v\in \N^T$, and $i<k$, then we say that {\em the component $v_{ik}
$ of $v$ is  admitted} if either $k=i+1$ and $v_{ik}=0$ or $k>i+1$ and 
$$\forall j, i<j<k \Rightarrow v_{ij}+v_{jk} \leq v_{ik} \leq v_{ij}+v_{jk}+1.$$

Hence $v$ is admitted if and only if all its components are admitted.

\begin{lemma} If for $v\in \N^T$ all components $v_{ik}$ with $k-i<n-1$ are admitted, then the numbers $v_{1i}+v_{in}$, $i=2,\ldots,n-1$ take at most two values, which are consecutive.
\end{lemma}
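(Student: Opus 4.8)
The plan is to compare the quantities $v_{1i}+v_{in}$ pairwise and show that any two of them differ by at most $1$; this immediately forces all of them to lie in a set of two consecutive integers. The crucial preliminary observation is that among all components of $v$, the only one whose admissibility is \emph{not} assumed is $v_{1n}$, since the condition $k-i<n-1$ fails exactly for the pair $(1,n)$. Equivalently, $\delta_{ijk}(v)\in\{0,1\}$ for every triple $i<j<k$ except possibly those of the form $(1,j,n)$.

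Next I would fix $2\le i<j\le n-1$ and compute the difference $(v_{1j}+v_{jn})-(v_{1i}+v_{in})$ by rewriting it through the $\delta$ functions. From $\delta_{1ij}(v)=v_{1j}-v_{1i}-v_{ij}$ one gets $v_{1j}-v_{1i}=\delta_{1ij}(v)+v_{ij}$, and from $\delta_{ijn}(v)=v_{in}-v_{ij}-v_{jn}$ one gets $v_{in}-v_{jn}=\delta_{ijn}(v)+v_{ij}$. Subtracting, the common term $v_{ij}$ cancels and we obtain the clean identity
\[
(v_{1j}+v_{jn})-(v_{1i}+v_{in})=\delta_{1ij}(v)-\delta_{ijn}(v).
\]

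It then remains to check that both $\delta$ values on the right lie in $\{0,1\}$. The triple $(1,i,j)$ has outer span $j-1\le n-2<n-1$, so $v_{1j}$ is an admitted component and hence $\delta_{1ij}(v)\in\{0,1\}$; the triple $(i,j,n)$ has outer span $n-i\le n-2<n-1$ (here $i\ge 2$ is used), so $v_{in}$ is admitted and $\delta_{ijn}(v)\in\{0,1\}$. Consequently the difference above lies in $\{-1,0,1\}$ for every pair $2\le i<j\le n-1$, whence $\max_i(v_{1i}+v_{in})-\min_i(v_{1i}+v_{in})\le 1$. Therefore the numbers $v_{1i}+v_{in}$, $i=2,\dots,n-1$, take at most two values, which are consecutive.

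The argument is essentially a one-line computation once the right telescoping is spotted, so there is no serious obstacle. The only point requiring genuine care is the bookkeeping on spans: one must confirm that precisely the two triples $(1,i,j)$ and $(i,j,n)$ are of the ``small'' type to which the hypothesis applies, and this is exactly where the restriction $2\le i\le n-1$ (excluding the endpoints $i=1$ and $i=n$) is essential, since it keeps the outer pairs $(1,j)$ and $(i,n)$ strictly shorter than $(1,n)$.
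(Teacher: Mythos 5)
Your proof is correct and follows essentially the same route as the paper's: the paper's $\eta_{1ij}$ and $\eta_{ijn}$ are exactly your $\delta_{1ij}(v)$ and $\delta_{ijn}(v)$, and both arguments hinge on the same identity $(v_{1j}+v_{jn})-(v_{1i}+v_{in})=\delta_{1ij}(v)-\delta_{ijn}(v)$ obtained by cancelling $v_{ij}$. Your explicit bookkeeping of which components are admitted (and why $i\geq 2$, $j\leq n-1$ matter) is a welcome clarification that the paper leaves implicit, but it is not a different argument.
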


\begin{proof}
By hypothesis, we have for $1<i<j<n$, $v_{1j}=v_{1i}+v_{ij}+\eta_{1ij}$ and $v_{in}=v_{ij}+v_{jn}+\eta_{ijn}$ 
for some  $\eta_{1ij},\eta_{ijn}\in\{0,1\}$. It follows that 
$$
v_{1i}+\eta_{1ij}+v_{in}=v_{1j}-v_{ij}+v_{ij}+v_{jn}+
\eta_{ijn}
=v_{1j}+v_{jn}+\eta_{ijn}.
$$
 Since the $\eta$'s take only the values 0 or 1, it follows that any two of the numbers 
$v_{1i}+v_{in}$, $i=2,\ldots,n-1$, differ in absolute value by at most 1. It follows easily that they take at most two values, 
which must be consecutive.
\end{proof}

By appropriately reindexing the vector, we obtain the following corollary.

\begin{corollary} Suppose that $v\in \N^T$ and that, for some $i<j$, the components $v_{rs}$ with $s-r<j-i$ are 
admitted. Then the numbers $v_{ip}+v_{pj}$, $p=i+1,\ldots,j-1$, take at most two values, which are consecutive.
\end{corollary}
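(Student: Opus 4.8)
The plan is to deduce the corollary from the preceding lemma by a straightforward change of indices that identifies the interval $\{i,i+1,\ldots,j\}$ with $\{1,2,\ldots,m\}$, where $m=j-i+1$. First I would introduce an auxiliary vector $\bar v$, indexed by the pairs in $\{1,\ldots,m\}$, defined by $\bar v_{ab}=v_{i+a-1,\,i+b-1}$ for $1\leq a<b\leq m$. This shift preserves the width of an index pair, since $(i+b-1)-(i+a-1)=b-a$, so a pair $(a,b)$ of width $b-a$ for $\bar v$ corresponds to a pair $(r,s)=(i+a-1,\,i+b-1)$ of width $s-r=b-a$ for $v$, lying entirely inside $[i,j]$.

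Next I would check that admittedness of components is transported correctly under this shift. The key observation is that admittedness of a component is an \emph{intrinsic} property of the indices strictly between its endpoints: the defining inequalities $v_{rp}+v_{ps}\leq v_{rs}\leq v_{rp}+v_{ps}+1$ (and, for width one, the condition $v_{r,r+1}=0$) involve only indices $p$ with $r<p<s$, all of which remain inside $[i,j]$ as soon as $r,s$ do. Consequently $\bar v_{ab}$ is an admitted component precisely when the corresponding $v_{i+a-1,\,i+b-1}$ is admitted in $v$. Since the hypothesis of the corollary guarantees that every $v_{rs}$ with $s-r<j-i=m-1$ is admitted, it follows that every component $\bar v_{ab}$ of width $b-a<m-1$ is admitted; this is exactly the hypothesis of the lemma with $n$ replaced by $m$. (If $j=i+1$ the asserted range of $p$ is empty and there is nothing to prove, so I may assume $j>i+1$, which also ensures the width-one components are covered by the hypothesis.)

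Finally I would apply the lemma to $\bar v$. It yields that the numbers $\bar v_{1q}+\bar v_{qm}$, for $q=2,\ldots,m-1$, take at most two values, which are consecutive. Translating back through $\bar v_{1q}=v_{i,\,i+q-1}$ and $\bar v_{qm}=v_{i+q-1,\,j}$, and setting $p=i+q-1$ (so that $p$ runs over $i+1,\ldots,j-1$ as $q$ runs over $2,\ldots,m-1$), gives exactly the claimed statement about the numbers $v_{ip}+v_{pj}$. The only real work is the bookkeeping of the index shift together with the verification that admittedness localizes to the subinterval; there is no genuine obstacle beyond this clerical step, which is why the authors summarize it as ``appropriately reindexing the vector.''
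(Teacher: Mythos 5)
Your proof is correct and is essentially the paper's own argument: the paper justifies this corollary in one line (``by appropriately reindexing the vector''), and your write-up simply makes that reindexing explicit, including the key observation that admittedness of a component is local to the indices strictly between its endpoints.
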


\begin{proof}[Proof of Theorem \ref{lattice}]
0. Let $u,v$ be two admitted vectors. We define $X\in \N^T$ as follows: $X_{ij}$ is defined by induction on $j-i\geq 1$; if $j=i+1$, $X_{ij}=0$; if $j>i+1$, then 
$X_{ij}=\max\{u_{ij},v_{ij},X_{ip}+X_{pj}, i<p<j\}$.

1. We show first that $X$ is an admitted vector. It is enough to show that each coefficient $X_{ij}$ is admitted, and we do it by induction on $j-i$. The case $j-i=1$ is clear, so take $j-i=h>1$. By induction, all coefficients $X_{rs}$ with $s-r<h$ are admitted. 

Suppose that $X_{ij}=X_{ip}+X_{pj}$ for some $p$ with $i<p<j$. By the corollary, one has 
$\{X_{iq}+X_{qj} , i<q<j\} \subset \{a-1,a\}$, and by the definition of $X_{ij}$, we must have   $a=X_{ip}+X_{pj}$. It follows that for any $q$, $i<q<j$, one 
has $X_{iq}+X_{qj}=X_{ij}$ or $X_{ij}-1$ and therefore $X_{iq}+X_{qj}\leq X_{ij}\leq X_{iq}+X_{qj}+1$. 

If $X_{ij}$ is not equal to $X_{ip}+X_{pj}$ for some $p$ with $i<p<j$, then we must have (choosing one of the two 
similar cases) $X_{ij}=v_{ij}\geq u_{ij}$ and $v_{ij}>X_{ip}+X_{pj}$ for any $p$ with $i<p<j$. Then $X_{ip}+X_{pj}<X_{ij}$; moreover $v$ being admitted, $X_{ij}=v_{ij}\leq v_{ip}+v_{pj}+1\leq X_{ip}+X_{pj}+1$, since by the recursive construction of $X$,
$v_{rs}\leq X_{rs}$ for any $rs$ with $s-r<j-i$.

We conclude that in all cases, $X_{ij}$ is admitted, and therefore  by induction $X$ is admitted.

2. We show now that $X$ is the supremun of $u$ and $v$, that is, $X$ is the smallest element in the set of upper 
bounds of $u,v$. By construction, $X_{ij}\geq u_{ij},v_{ij}$, so that $X\geq u,v$ and $X$ is an upper bound of $u$ and 
$v$. Now let $Y$ be an admitted vector which is an upper bound of $u,v$. We show by induction on $j-i$ that $X_{ij}\leq Y_{ij}$, which will imply 
that $X\leq Y$, as was to be shown. We have $X_{ij}=0=Y_{ij}$ if $j-i=1$. Suppose that $j-i>1$. Then $Y_{ij}\geq 
u_{ij},v_{ij} $; and by the inequalities (\ref{admitted}), if $i<p<j$, $Y_{ij}\geq Y_{ip}+Y_{pj}$ which by induction is $\geq 
X_{ip}+X_{pj}$; thus $Y_{ij}\geq \max\{u_{ij},v_{ij},X_{ip}+X_{pj}, i<p<j\}= X_{ij}$.

3. Thus $u\vee v$ exists in the poset. Since the poset has an anti-automorphism, $u\wedge  v$ exists, too, and the poset is a lattice.
\end{proof}

The proof allows to compute $u\vee v$. A direct computation of $X=u\wedge v$ is obtained recursively, by induction on $j-i$, as follows: $X_{i,i+1}=0$; if $j>i+1$, then 
$X_{ij}=\min\{u_{ij},v_{ij},X_{ip}+X_{pj}+1, i<p<j\}$. The proof is left to the reader (one may use the first anti-automorphism of 
Corollary \ref{anti-autom}). 

The lattice is not modular. Indeed, looking in Figure \ref{P5}, it is seen that the 
circular permutations $(14235)$ and $(13425)$ have 
infimum $(14253)$ and supremum $(13542)$, and respective rank 4,4,3 and $6$. 
Since in a modular lattice, the rank function $\rho$ satisfies 
$\rho(x)+\rho(y)=\rho(x\wedge y)+\rho(x\vee y)$ (see \cite{S} page 104), the lattice is not 
modular. Since each 
distributive lattice is 
modular (\cite{S} p. 106), the lattice is not distributive either.

\subsection{Multiplicities in the Hasse diagram and Eulerian numbers}

Recall that the {\em Eulerian numbers} are defined as follows: $a(n,k)$ is the number of permutations in $S_n$ whose 
descent set has $k$ elements, where the {\em descent set} of $\sigma\in S_n$ is $\{i,1\leq i\leq n-1,\sigma(i)>\sigma(i+1)\}$. These numbers satisfy the recursion $$a(n,k)=(k+1)a(n-1,k)+(n-k)a(n-1,k-1)$$ if $n,k\geq 1$, with the initial conditions $a(0,k)=0$ for any $k\geq 1$, and $a(n,0)=1$ for any $n\geq 0$. See \cite{P}.


\begin{definition}\label{large circular permutation}Let $\sigma=(a_1\cdots a_n)$ be a circular permutation in $S_n$. We call {\em large circular descent} of $\sigma$ a letter $a_i$ such that $a_{i-1}>a_i+1$, were the indices are taken modulo $n$. The number of large circular descents of $\sigma$ is denoted by $d(\sigma)$.

Symmetrically, we call {\em large circular ascent} of $\sigma$ a letter $a_i$ such that $a_{i}<a_{i+1}+1$, were the indices are taken modulo $n$.
\end{definition}

In other words, $b$ is a large circular descent if, denoting by $a$ the letter before $b$ in the cycle, one has $a>b+1$. Note that we say that $b$ is the large circular descent, and not $a$; the reason for this notational shift will appear in the proofs below. For example, the large circular descents of $(1,4,2,6,5,3)$ are $1$ (because before 1 there is 3, cyclically speaking), 2 (because before 2 there is 4), and $3$; 5 is not a large circular descent. 

\begin{theorem}\label{euler} The number of circular permutations in $S_{n+1}$ having $k$ large circular descents is 
equal to the Eulerian number $a(n,k)$.
\end{theorem}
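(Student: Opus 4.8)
The plan is to prove the statement by induction on $n$, realizing the Eulerian recursion $a(n,k)=(k+1)a(n-1,k)+(n-k)a(n-1,k-1)$ combinatorially through the insertion of the largest letter. Write $c_{m,k}$ for the number of circular permutations in $S_m$ having exactly $k$ large circular descents. I aim to show that $c_{m,k}$ obeys the same recursion and initial data as the Eulerian numbers, namely $c_{n+1,k}=(k+1)c_{n,k}+(n-k)c_{n,k-1}$ together with the base case $c_{1,k}=a(0,k)$ (the unique $1$-cycle has no large circular descent); the theorem $c_{n+1,k}=a(n,k)$ then follows at once.

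First I would set up the bijection that drives the recursion. Every circular permutation of $S_{n+1}$ arises in exactly one way from a circular permutation $\tau$ of $S_n$, on the alphabet $\{1,\dots,n\}$, by choosing one of its $n$ cyclic gaps and inserting the letter $n+1$ there; deleting $n+1$ recovers the pair $(\tau,\text{gap})$. This gives a bijection between circular permutations of $S_{n+1}$ and pairs consisting of a circular permutation of $S_n$ and a gap, which in passing re-proves that there are $(n-1)!\cdot n=n!$ of them.

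The heart of the argument is to track how $d$ changes under such an insertion. Since a large circular descent is attributed to a letter through its cyclic predecessor, inserting $n+1$ in the gap between a letter $x$ and its cyclic successor $y$ alters the predecessor of only one old letter, namely $y$ (whose predecessor becomes $n+1$ instead of $x$), while creating the new letter $n+1$ with predecessor $x$; every other letter keeps its predecessor, hence its status. Now $n+1$ is never a large circular descent, as nothing can exceed it by $2$; and after insertion $y$ is a large circular descent precisely when $n+1>y+1$, that is, when $y\neq n$. Comparing with the status of $y$ in $\tau$ (a large circular descent exactly when $x>y+1$), I find that the net change $\Delta$ in the number of large circular descents equals $1$ when $y\neq n$ and $y$ is not already a large circular descent of $\tau$, and equals $0$ in every other case.

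It then remains to do the bookkeeping over the $n$ gaps of a fixed $\tau$ with $d(\tau)=j$. Here I would use that $n$ itself is never a large circular descent (its predecessor cannot exceed $n+1$), so the $j$ large circular descents of $\tau$ are all different from $n$. Consequently the gaps with $\Delta=0$ are exactly the single gap lying before $n$ together with the $j$ gaps lying before the existing large circular descents, giving $j+1$ insertions that keep $d=j$ and the remaining $n-1-j$ insertions that raise it to $j+1$. Summing these contributions over all $\tau$ yields $c_{n+1,k}=(k+1)c_{n,k}+(n-k)c_{n,k-1}$, matching the Eulerian recursion. I expect the main obstacle to be precisely this case analysis for $\Delta$, in particular checking that the special gap before $n$ is counted once and is disjoint from the gaps before large circular descents; the worked example $(1,4,2,6,5,3)$, where $n=6$ is not a large circular descent while $1,2,3$ are, serves as a useful sanity check.
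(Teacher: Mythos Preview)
Your proposal is correct and follows essentially the same route as the paper: both arguments remove (equivalently, insert) the largest letter $n+1$ to set up a bijection between circular permutations of $S_{n+1}$ and pairs (circular permutation of $S_n$, insertion position), and both perform the identical case analysis showing that the descent count stays fixed exactly when the successor $y$ of the inserted letter is $n$ or is already a large circular descent, yielding the Eulerian recursion. The paper additionally verifies the boundary $c(n,0)=1$ explicitly, but this is not needed once your recursion is established for all $k\ge 0$ with the base $c_{1,k}=a(0,k)$.
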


\begin{corollary}\label{coverk} In the poset of circular permutations in $S_{n+1}$, the number of elements which are covered by $k$ 
elements is $a(n,k)$. In particular the number of inf-irreducible elements is $a(n,1)=2^n-n-1$. 
\end{corollary}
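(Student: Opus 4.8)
The plan is to deduce the corollary from Theorem \ref{euler} essentially as a reinterpretation, by identifying, for each circular permutation $\sigma$, the number of elements covering it with its number $d(\sigma)$ of large circular descents.

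First I would invoke Corollary \ref{label}: an element $\sigma'$ covers $\sigma$ exactly when $\sigma'$ is obtained by replacing a circular factor $sr$ of $\sigma$ with $r+1<s$ by $rs$. Writing $\sigma=(a_1\cdots a_{n+1})$, such a circular factor is a pair of cyclically consecutive letters $a_{i-1}a_i$ with $a_{i-1}>a_i+1$, which by Definition \ref{large circular permutation} is precisely the assertion that $a_i$ is a large circular descent of $\sigma$. So each large circular descent yields a covering, and by Corollary \ref{label} every covering arises in this way.

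Next I would verify that this correspondence is a bijection, so that $\sigma$ is covered by exactly $d(\sigma)$ elements. Injectivity is immediate: since $\sigma$ is a permutation, each value $a_i$ occurs once, so distinct large circular descents give distinct pairs $(r,s)=(a_i,a_{i-1})$, and by Corollary \ref{label} the corresponding coverings increment distinct coordinates $v_{rs}$ of the admitted vector $V(\sigma)$, hence are distinct. Surjectivity is the covering description recalled above. Combining this with Theorem \ref{euler}, the number of circular permutations in $S_{n+1}$ covered by $k$ elements equals the number having $k$ large circular descents, namely $a(n,k)$.

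Finally, for the ``in particular'' statement, an element is inf-irreducible exactly when it is covered by a single element, i.e.\ when $d(\sigma)=1$, so their number is $a(n,1)$. To obtain the closed form I would count the permutations of $\{1,\ldots,n\}$ whose descent set is contained in $\{i\}$: these are increasing on $\{1,\ldots,i\}$ and on $\{i+1,\ldots,n\}$, hence determined by the chosen first block, giving $\binom{n}{i}$ of them, of which exactly one (the identity) has empty descent set. Summing the remaining counts yields $\sum_{i=1}^{n-1}\left(\binom{n}{i}-1\right)=(2^n-2)-(n-1)=2^n-n-1$. The only point to keep straight is the bookkeeping between ``covered by $k$ elements'' and $d(\sigma)$; since the covering relation was already pinned down in Corollary \ref{label}, there is no genuine obstacle, the real content residing in the already-assumed Theorem \ref{euler}.
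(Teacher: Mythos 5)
Your proposal is correct and follows essentially the same route as the paper, which states the corollary as an immediate consequence of Theorem \ref{euler}: the covering relation is exactly the replacement of a circular factor $sr$ ($s>r+1$) by $rs$ (Corollary \ref{label}), so the number of elements covering $\sigma$ is $d(\sigma)$, and the closed form $a(n,1)=2^n-n-1$ is standard. Your extra details (injectivity via distinct coordinates of the admitted vector, and the binomial-sum computation of $a(n,1)$) are accurate elaborations of what the paper leaves implicit.
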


The next corollary is somewhat curious, since the two poset are not isomorphic, nor are their Hasse diagram as graphs.

\begin{corollary}
The total number of edges in the Hasse diagram of the poset of circular permutations in $S_{n+1}$ is equal to the total 
number of edges in the Hasse diagram of the poset of the weak order in $S_n$.
\end{corollary}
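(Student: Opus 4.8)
The plan is to reduce the statement to a counting identity that both sides already enjoy. The left-hand side counts edges in the Hasse diagram of the circular-permutation poset on $S_{n+1}$; the right-hand side counts edges (covering relations) in the weak order on $S_n$. My approach is to compute each quantity as a weighted sum over the relevant permutations and then observe that both sums equal the same expression in Eulerian numbers.

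First I would handle the left side. By Corollary \ref{coverk}, the number of elements of the poset in $S_{n+1}$ that are covered by exactly $k$ elements is the Eulerian number $a(n,k)$. Since every edge of the Hasse diagram is incident to exactly one lower endpoint, and that lower endpoint is covered by some number $k$ of elements, the total number of edges is obtained by summing $k$ over all elements, weighted by how many cover each one. Hence the left-hand side equals $\sum_{k\ge 0} k\, a(n,k)$.

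Next I would compute the right side. In the weak (say, left weak) order on $S_n$, the covering relations are exactly the multiplications by an adjacent transposition that increase length, so the number of elements covering a given $\sigma$ is its number of descents (the cardinality of the descent set), and dually the number of edges leaving $\sigma$ upward equals the number of ascents. Counting each edge once by its lower endpoint, the total number of edges equals $\sum_{\sigma\in S_n}(\text{number of descents of }\sigma)$, which regrouped by descent count is again $\sum_{k\ge 0} k\, a(n,k)$, since by definition $a(n,k)$ is the number of $\sigma\in S_n$ with exactly $k$ descents. Comparing the two expressions gives the equality.

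The main subtlety, rather than a genuine obstacle, is bookkeeping the orientation of edges consistently on both sides so that each edge is counted exactly once: on the circular-permutation side one must count by the covered (lower) element using Corollary \ref{coverk}, whereas on the weak-order side it is most natural to count covers upward by descents; one should verify that the two conventions yield the same sum $\sum_k k\,a(n,k)$ and do not introduce a spurious factor or an off-by-one in the index shift between $S_{n+1}$ and $S_n$. Once that is checked, the identity $\sum_k k\,a(n,k)=\sum_k k\,a(n,k)$ is tautological and the corollary follows immediately from Corollary \ref{coverk} together with the standard description of covers in the weak order.
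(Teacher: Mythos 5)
Your proposal is correct and is essentially the paper's own argument: the paper's one-line proof likewise identifies the edges of the weak order on $S_n$ with the descents of all permutations and the edges of the circular-permutation poset with the large circular descents counted by Theorem \ref{euler}, so that both totals equal $\sum_{k} k\, a(n,k)$. Your only slip is the internal inconsistency in attributing descents to upward covers in the weak order (covers going up correspond to ascents of the lower element, equivalently descents of the upper element), but this is harmless since $\sum_{\sigma} \mathrm{des}(\sigma) = \sum_{\sigma} \mathrm{asc}(\sigma) = \sum_k k\,a(n,k)$, exactly the bookkeeping point you flagged.
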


\begin{proof} This follows from the theorem since the edges in the Hasse diagram of the right weak order are in 
bijection with the set of descents of all permutations.
\end{proof}

To prove the theorem, we show that the number of circular permutations in $S_{n+1}$ with $k$ large circular descents satisfy the same recursion as the Eulerian numbers, and the same initial conditions. Finding a bijective proof is an open question.

\begin{proof}[Proof of Theorem \ref{euler}]

1. Denote by $C(n)$ the set of circular permutations in $S_{n+1}$, so that $|C(n)|=n!$. Denote by $C(n,k)$ the set 
of elements in $C(n)$ having $k$ large circular descents, and $c(n,k)=|C(n,k)|$. 

We verify first that 
the initial conditions are the same. Indeed, $c(0,k)$ for $k\geq 1$ is the number of circular permutations in $S_1$ having $k$ large circular descents, hence $c(0,k)=0$. Moreover, for $n\geq 0$, $c(n,0)$ is the number of circular permutations in $S_{n+1}$ having no large circular descent; this permutation is $(n,n-1,\ldots,2,1)$ as we saw in the proof of Corollary \ref{order1}, and therefore $c(n,0)=1$.

We show below that 
$c(n,k)=(k+1)c(n-1,k)+(n-k)c(n-1,k-1)$, which is the same recursion than the Eulerian numbers. Thus we will obtain $c(n,k)=a(n,k)$, which proves the theorem.

2. Recall that each element in $C(n)$ is represented by a circular word $(a_1,\ldots,a_{n+1})$ and that we do not distinguish between all its cyclic rearrangements (so that e.g.($a_1,\ldots,a_{n+1})=(a_2,\ldots,a_{n+1},a_1)$).

3. Define a mapping $\phi$ from $C(n)$ into $C(n-1)\times [n]$ as follows. Associate with 
$\alpha=(a_1,\ldots,a_{n+1})\in C(n)$ the circular permutation $\beta=(b_1,\ldots,b_n)\in C(n-1)$ obtained by removing $n+1$ from the 
cycle, and let $b_j$ denote the letter such that $(a_1,\ldots,a_{n+1})$ is obtained from $\beta=(b_1,\ldots,b_n)$ by 
inserting $n+1$ before $b_j$. 

Examples ($n+1=7$): a) $\alpha=(2,4,6,5,1,7,3)\in C(6)$, $\beta=(2,4,6,5,1,3)\in C(5)$, $b_j=3\in [6]$. b) $\alpha=(2,4,6,5,1,3,7)$, $
\beta=(2,4,6,5,1,3)$, $b_j=2$.

We define $\phi(\alpha)=(\beta,b_j)$. This is clearly a bijection, since we recover 
$\alpha$ from $\beta$ by inserting $n+1$ before $b_j$, so that $\phi$ is injective, and since the two sets have the same 
cardinality.

4. For $\beta\in C(n-1)$, define $D(\beta)$ to be the set of large circular descents of $\beta$ together with the letter $n$; hence $|D(\beta)|=d(\beta)+1$, since $n$ cannot be a large circular descent. Moreover, let $E(\beta)$ be the set $[n]\setminus D(\beta)$, so that $|E(\beta)|=n-1-d(\beta)$.

We claim that the restriction of the mapping $\phi$ to $C(n,k)$ is a bijection from 
$C(n,k)$ onto
$$\{(\beta,b),\beta\in C(n-1,k), b\in D(\beta)\}\cup\{(\beta,b),\beta\in C(n-1,k-1), b\in E(\beta)\}.$$
The set $C(n,k)$ has $c(n,k)$ elements, and the displayed set has $c(n-1,k)(k+1)+c(n-1,k-1)(n-k)$ elements. Thus the recursion follows.

5. In order to prove the claim, we prove that if $\phi(\alpha)=\beta$, then $\alpha$ and $\beta$ have the same number of large circular descents if and only if $b_j$ is a large 
circular descent of $\beta$ or if $b_j=n$; and that otherwise, $\alpha$ has one more. 

Indeed, suppose that $b_j$ is large circular descent of $\beta$, and in particular $b_j<n$; then inserting $n+1$ before $b_j$ creates in $\alpha$ the large circular 
descent $b_j$ (since $n+1>b_j+1$), but $n+1$ is not a large circular in $\alpha$; hence $d(\alpha)=d(\beta)$. Suppose now that 
$b_j=n$; then the previous insertion does not create a new large circular descent, nor remove one; we have $d(\alpha)=d(\beta)$, too.

Suppose now that $b_j$ is not a large circular descent of $\beta$, nor $n$; then the insertion of $n+1$ before $b_j$ 
creates a new large circular descent; thus $d(\alpha)=d(\beta)+1$.
\end{proof}

\subsection{Limiting poset}

Suppose that we have a family $P_n$, $n\in\N$, of ranked posets; suppose also that for any $k$, and for $n$ large enough, the subposets $\{x\in P_n, rk(x)\leq k\}$ are all isomorphic. We may then define the {\em limit} of these posets as the ranked poset $P$, whose elements of rank $\leq k$ form the poset $\{x\in P_n, rk(x)\leq k\}$, $n$ large enough.

Recall that a {\em partition} of $n$ is a multiset of positive integers whose sum is $n$ ($n$ is the {\em weight} of the partition); a partition of $n+1$ {\em covers} 
a partition of $n$ if the latter is obtained from the latter by adding 1 to the multiset, or by replacing some element $a$ 
by $a+1$. This is {\em Young's lattice}, with rank equal to the weight function; see \cite{Sa} Definition 5.1.2. 

\begin{theorem}\label{limit} The posets of circular permutations have a limit poset, which is Young's lattice.
\end{theorem}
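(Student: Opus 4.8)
The plan is to establish two things: first, that the family of posets $P_n$ of circular permutations in $S_n$ (equivalently, admitted vectors) does admit a limit in the sense defined above, i.e.\ that for each fixed rank bound $k$ the low-rank parts stabilize as $n\to\infty$; and second, to identify that limit with Young's lattice. I would work entirely on the admitted-vector side, since there the rank function is simply the sum of coordinates (Proposition \ref{V-0}) and the covering relation has the clean description from Lemma \ref{monter} and Corollary \ref{label}: $v'$ covers $v$ precisely when $v'$ is obtained from $v$ by raising a single coordinate $v_{ij}$ by $1$, subject to the local conditions \eqref{nathan}.

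First I would construct the candidate bijection between low-rank admitted vectors and partitions. Given an admitted vector $v\in\N^T$, its support lives on the entries $v_{ij}$ with $j-i\geq 2$, and by Corollary \ref{largest} each such entry is bounded by $j-i-1$. The key observation is that an admitted vector of small total weight must be supported on coordinates $(i,j)$ with $j-i$ small, and, crucially, its nonzero ``innermost'' increments behave additively: I expect that the natural map sending $v$ to the multiset of values $\{\delta_{i,i+1,i+2}(v)=v_{i,i+2}\}$ together with the higher increments organizes into a partition. More concretely, I would read off a partition from $v$ by recording the sequence of coordinate-raising moves needed to build $v$ from the zero vector, using the fact (Theorem \ref{lattice} and its corollaries) that each covering move raises one coordinate by $1$ under the constraints \eqref{nathan}. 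The content to verify is that the constraints \eqref{nathan}, when restricted to vectors of bounded weight and $n$ large, impose exactly the order-ideal structure of Young's lattice: a box can be added to the partition iff the corresponding coordinate-raise is admissible.

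The main structural step, and I expect the main obstacle, is proving stabilization: that the subposet $\{v : \sum_{ij} v_{ij}\leq k\}$ is independent of $n$ for $n$ large. The mechanism is that a nonzero entry $v_{ij}$ with $j-i=d$ forces, via the admissibility inequalities $v_{ij}\leq v_{ip}+v_{pj}+1$ propagated down to adjacent coordinates, a lower bound on the total weight growing with $d$; hence bounded weight caps the ``width'' $j-i$ of active coordinates, and once $n$ exceeds a threshold depending only on $k$, adding further letters contributes only translated copies of the same local configurations. I would make this precise by showing the relevant coordinates can be ``shifted'' along the diagonal (formally using the reindexing already invoked in the Corollary following the first Lemma of Subsection \ref{lattice}, and the translation symmetry $i\mapsto i+1,\,j\mapsto j+1$ of the defining inequalities \eqref{admitted}), which gives the isomorphism between the rank-$\leq k$ parts for consecutive large $n$.

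Finally, with stabilization in hand, I would verify that the limit poset is Young's lattice by matching covering relations directly. In the limit, an element is a finitely-supported admitted configuration on the upper diagonals, its rank is the coordinate sum, and a cover raises one admissible coordinate by $1$; translating this through the bijection of the previous paragraph, a cover corresponds to adding one box to a partition, which is exactly the covering relation of Young's lattice (adding $1$ to the multiset or replacing $a$ by $a+1$, as in \cite{Sa}). Checking that the two covering conditions coincide—i.e.\ that the admissibility constraints \eqref{nathan} in the stable range translate precisely into the condition that a partition-diagram box is addable—is the last verification; it is routine once the bijection is set up correctly, but it is where one must be careful that no spurious covers are created or lost. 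I would close by remarking that the rank function (coordinate sum, equivalently $N$) matches the weight function on partitions, completing the identification of the ranked posets.
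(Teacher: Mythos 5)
Your argument hinges on the structural claim that a nonzero entry $v_{ij}$ with $j-i=d$ forces total weight growing with $d$, so that ``bounded weight caps the width $j-i$ of active coordinates.'' This claim is backwards, and the counterexample is the atom of the poset itself. The admissibility inequalities (\ref{admitted}) applied to triples $p<i<j$ and $i<j<q$ give $v_{pj}\geq v_{pi}+v_{ij}$ and $v_{iq}\geq v_{ij}+v_{jq}$ (note these are the lower-bound halves, not the inequality $v_{ij}\leq v_{ip}+v_{pj}+1$ you cite); hence $v_{ij}\geq 1$ forces $v_{pj}\geq 1$ for every $p<i$ and $v_{iq}\geq 1$ for every $q>j$, so the weight is at least $1+(i-1)+(n-j)$. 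Thus bounded weight forces the support to lie near the \emph{corner} $(1,n)$: $i$ close to $1$, $j$ close to $n$, so the width $j-i$ is forced to be \emph{large} (close to $n$), not small. In particular, by Lemma \ref{monter} the only coordinate that can be raised from the zero vector is $v_{1n}$ (for any other $(i,j)$ the conditions $\delta_{pij}=1$, $p<i$, or $\delta_{ijp}=1$, $p>j$, fail), so $v_{1n}=1$ is the unique rank-one element — an admitted vector of weight $1$ whose active coordinate has maximal width $n-1$. Moreover, for $n>k+2$ every admitted vector of weight $\leq k$ has all $v_{i,i+2}=0$, so your proposed reading of the partition from the innermost increments $\delta_{i,i+1,i+2}(v)=v_{i,i+2}$ is vacuous: it sends every low-rank vector to the empty multiset.

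The same error sinks the stabilization mechanism. If low-weight configurations really lived on small-width coordinates and could be shifted along the diagonal $i\mapsto i+1$, $j\mapsto j+1$, the posets would \emph{not} stabilize: rank $1$ alone would contain roughly $n$ translated copies of the same local configuration, growing with $n$. Stabilization holds precisely because of the boundary terms in (\ref{nathan}) that a translation argument ignores — the constraints on raising $v_{ij}$ quantify over all $p<i$ and all $p>j$, anchoring every admissible raise to the corner; e.g.\ $v_{1n}=1$ alone is admitted while its ``translate'' $v_{2n}=1$ alone is not, since $\delta_{12n}=-1$. A repaired admitted-vector proof would reindex the corner by $(a,b)=(i-1,\,n-j)$ with $a+b<k$, show the resulting finite poset is independent of $n$ for $n\geq 2k$, and then match covers with box-additions (for instance $\lambda_i=\sum_j v_{ij}$ works at low ranks: the two rank-two vectors $\{v_{1n}=v_{1,n-1}=1\}$ and $\{v_{1n}=v_{2n}=1\}$ go to $(2)$ and $(1,1)$). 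The paper instead sidesteps all of this by working with word representatives: writing the minimum as $(\lfloor n/2\rfloor+1)\cdots n\,1\,2\cdots\lfloor n/2\rfloor$, it observes that all covers up to rank $k\leq\lfloor n/2\rfloor$ are ``inner'' swaps near the central factor $n1$, so the low-rank elements are exactly shuffles of the two halves, and the partition is read off from the crossing numbers $a_i$.
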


\begin{proof} 1. We prove first that the limit exists. Let $k$ be an integer. Suppose that $n\geq 2k$. We show that the 
subposet of elements of rank $\leq k$ is independent of $n$. 

For this, recall from Section \ref{circular} how the poset is constructed: one represents each circular permutation by one of its cyclic representative, which is a permutation-word.  Each covering relation $w\to w'$ is  of the form either $w=usrv,w'=ursv$, or $w=rus,w'=sur$, $s>r+1$. In the first case we call the covering relation {\em inner}, and in the second case {\em outer}.

The minimal circular permutation is $(1,2,\ldots,n)$, and we choose to represent it by the word $w_0=(\lfloor n/2\rfloor+1)\dots(n-1)n12\ldots \lfloor n/2\rfloor$. For example, in the cases $n=5,6$, it is respectively $34512$ and $456123$. 

We then generate the elements of the poset of rank $\leq k$. We therefore use at most $k$ covering relations. A 
moment's thought shows that, since the first covering relation is necessarily determined by the middle factor $n1$, and since $k\leq \lfloor n/2\rfloor$, we never perform an outer covering relation. And this 
implies that the generated subposet is independent of $n$.

2. To finish the proof, it is enough to prove that if $n\geq 2k$, then the subposet of elements of rank $\leq k$ is 
isomorphic with the subposet of the Young lattice whose elements are the partitions of weight $\leq k$.

By the previous part, all these former elements are a shuffle of the two words $(\lfloor n/2\rfloor+1)\dots(n-1)n$ and $12\ldots 
\lfloor n/2\rfloor$, and the order is the inverse order of the right weak order on these words. 
Note that each such shuffle, $w$ say, is completely determined by the sequence $a_1,a_2,a_3,\ldots,a_{\lfloor n/
2\rfloor}$ where $a_i$ is the number of $j=(\lfloor n/2\rfloor+1),\dots,(n-1),n$, which are at the right of $i$ in $w$. For example, with $n=6$, we have the shuffle $415263$, with $a_1=2, a_2=1,a_3=0$.

The 
rank of $w$ is the sum of the $a_i$'s. One has $a_1\geq a_2\geq a_3\ldots$, and therefore the sequence determines a 
partition of some integer $\leq k$. This sequence defines the isomorphism between the two posets. We leave the 
details to the reader.
\end{proof}

\section{The functions $\delta$ and triangulations of an $n$-gon}

We may characterize the sequences $(\delta_{ij}(v))_{1\leq i<j\leq n}$, $v$ admitted vector. The latter is completely determined by this sequence. The computation of $v$ leads to triangulations of an $n$-gon.

\begin{theorem} The mapping $v \mapsto (\delta_{ijk}(v))_{1\leq i<j<k\leq n}$ is a bijection from the set of admitted vectors 
onto the set of sequences $(a_{ij})_{1\leq i<j<k\leq n}$ with values in $\{0,1\}$, satisfying the relation $a_{ijk}+a_{ikl}=a_{ijl}+a_{jkl}$ for any 
$i<j<k<l$.
\end{theorem}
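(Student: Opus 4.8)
The plan is to prove bijectivity by exhibiting the inverse map explicitly. Well-definedness of the forward map—that each $\delta_{ijk}(v)$ lies in $\{0,1\}$ and that the sequence $(\delta_{ijk}(v))$ satisfies the relation $a_{ijk}+a_{ikl}=a_{ijl}+a_{jkl}$—has already been observed just after (\ref{ptolemee}), so only injectivity and surjectivity remain.

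First I would record the reconstruction formula. Since $v_{i,i+1}=0$ and $\delta_{i,j-1,j}(v)=v_{ij}-v_{i,j-1}-v_{j-1,j}=v_{ij}-v_{i,j-1}$, a straightforward induction on $j-i$ gives
\[
v_{ij}=\sum_{m=i+1}^{j-1}\delta_{i,m,m+1}(v)\qquad(i<j).
\]
In particular $v$ is completely determined by the values $\delta_{i,m,m+1}(v)$, hence a fortiori by the whole sequence $(\delta_{ijk}(v))$; this proves injectivity.

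For surjectivity, given any $\{0,1\}$-valued sequence $(a_{ijk})_{i<j<k}$ satisfying the Ptolemy relation, I would define $v\in\N^T$ by $v_{i,i+1}=0$ and $v_{ij}=\sum_{m=i+1}^{j-1}a_{i,m,m+1}$ for $j>i+1$; this is a vector of nonnegative integers with $v_{i,i+1}=0$. The crucial claim is that $\delta_{ijk}(v)=a_{ijk}$ for all $i<j<k$, which I would prove by induction on $k-j$. By construction $v_{ik}-v_{i,k-1}=a_{i,k-1,k}$. The base case $k=j+1$ reads $\delta_{i,j,j+1}(v)=v_{i,j+1}-v_{ij}-v_{j,j+1}=a_{i,j,j+1}$, since $v_{j,j+1}=0$. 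For the inductive step with $k\ge j+2$ one computes
\begin{align*}
\delta_{ijk}(v)-\delta_{i,j,k-1}(v)&=(v_{ik}-v_{i,k-1})-(v_{jk}-v_{j,k-1})\\
&=a_{i,k-1,k}-a_{j,k-1,k},
\end{align*}
while the Ptolemy relation applied to the quadruple $i<j<k-1<k$ gives $a_{ijk}=a_{i,j,k-1}+a_{i,k-1,k}-a_{j,k-1,k}$; combining these with the inductive hypothesis $\delta_{i,j,k-1}(v)=a_{i,j,k-1}$ yields $\delta_{ijk}(v)=a_{ijk}$. Since then each $\delta_{ijk}(v)=a_{ijk}\in\{0,1\}$, the inequalities (\ref{admitted}) hold, so $v$ is an admitted vector mapping to $(a_{ijk})$, and surjectivity follows.

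The main obstacle is the inductive step in the surjectivity argument: it is the only place where the Ptolemy hypothesis on the sequence is genuinely used, and the decision to induct on $k-j$—peeling off the index $k-1$ lying between $j$ and $k$—is exactly what makes the relation applicable to the quadruple $i<j<k-1<k$. Everything else (the reconstruction formula, injectivity, and the verification that the constructed $v$ is admitted) is formal once this claim is in hand.
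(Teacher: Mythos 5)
Your proof is correct. It follows the same overall strategy as the paper—reconstruct $v$ from the $\delta$'s via the recursion $v_{ij}=v_{ip}+v_{pj}+a_{ipj}$ and let the Ptolemy relation do the consistency work—but it organizes the argument differently. The paper defines $v_{ij}$ by induction on $j-i$ using an \emph{arbitrary} intermediate index $p$, and its key lemma is that the result does not depend on the choice of $p$ (proved by applying the relation to $i<p<q<j$); once that confluence is established, $\delta_{ipj}(v)=a_{ipj}$ holds for every triple by the very definition of $v$, so surjectivity is immediate. You instead fix the canonical choice $p=j-1$, i.e.\ the telescoping sum $v_{ij}=\sum_{m=i+1}^{j-1}a_{i,m,m+1}$, which makes the definition unambiguous from the outset—no well-definedness argument needed—but you must then prove separately that \emph{all} the $\delta$'s are recovered, not just the consecutive ones $\delta_{i,m,m+1}$; that is your induction on $k-j$, which applies Ptolemy to the quadruple $i<j<k-1<k$. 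The two proofs spend the Ptolemy hypothesis at different places and are of comparable length; yours has the small bonus of giving a closed formula for the inverse map, while the paper's shows the recursive definition is confluent for any choice of splitting point.
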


\begin{proof} The mapping is well-defined by Eq. (\ref{ptolemee}). Injectivity follows by induction on $j-i$ from the initial conditions $v_{i,i+1}=0$ and if $j-i>1$, from the equation $ v_{ij}=v_{ip}+v_{pj}+\delta_{ipj}(v)$ for any $p$, $i<p<j$.

To prove surjectivity, let $(a_{ij})_{1\leq i<j\leq n}$ be a sequence, with values in $\{0,1\}$, satisfying the relations $a_{ijk}+a_{ikl}=a_{ijl}+a_{jkl}$ for any 
$i<j<k<l$. Define $v_{ij}$ by induction: $v_{i,i+1}=0$ and for $j-i>1$, choose $p$ between $i$ and $j$, and define $v_{ij}$ by $v_{ij}=v_{ip}+v_{pj}+a_{ipj}$. 
We verify, by induction on $j-i$, that this does not depend on the choice of $p$. So let $i<p<q<j$. We have by induction 
$v_{iq}=v_{ip}+v_{pq}+a_{ipq}$ and $v_{pj}=v_{pq}+v_{qj}+a_{pqj}$. The above relations imply that $a_{ipq}+a_{iqj}
=a_{ipj}+a_{pqj}$. Hence 
\begin{align*}
v_{ip}+v_{pj}+a_{ipj} & = v_{ip}+v_{pq}+v_{qj}+a_{pqj}+a_{ipj} \\
							   & = v_{ip}+v_{pq}+v_{qj}+a_{ipq}+a_{iqj} \\
							   & = v_{iq}+v_{qj}+a_{iqj},
\end{align*}
 and so $v_{ij}$ is well-defined. The fact that $v$ is an admitted vector then follows from the fact that for any $i<j<k$, $v_{ik}-v_{ij}-v_{jk}=a_{ijk}\in \{0,1\}$, implying the inequalities (\ref{admitted}).
\end{proof}

Consider a convex $n$-gon in the plane whose vertices are sequentially numbered $1,2,\ldots,n$. In this $n$-gon, a {\em triangle} (resp. {\em convex quadrangle}) is determined by a subset of three (resp. four) vertices of the $n$-gon. A {\em triangulation} of the $n$-gon is a set of triangles whose interiors do not pairwise intersect, and who cover the whole $n$-gon. 

Note that a 3-subset $H$ of $\{1,2,\ldots,n\}$ determines uniquely three numbers $i<j<k$ such that $H=\{i,j,k\}$. We write $\delta_H:=\delta_{ijk}$.

\begin{theorem}\label{triang} If $v$ is an admitted vector, and $\mathcal T$ a triangulation of the $n$-gon, then $v_{1n}$ is equal to the sum of all $\delta_T(v)$ for all triangles $T\in \mathcal T$.
\end{theorem}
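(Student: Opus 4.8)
The plan is to prove the statement by induction on $n$, exploiting the recursive (ear-removal) structure of triangulations of a convex polygon together with the Ptolemy-like relation \eqref{ptolemee}. The base case is $n=3$: the only triangulation of the triangle $\{1,2,3\}$ consists of the single triangle $T=\{1,2,3\}$, and indeed $v_{13}=\delta_{123}(v)$ since $v_{12}=v_{23}=0$. For the inductive step, I would recall the classical fact that every triangulation $\mathcal T$ of a convex $n$-gon ($n\geq 4$) contains an \emph{ear}, i.e. a triangle $T_0=\{a-1,a,a+1\}$ using two consecutive boundary edges and one diagonal, where $a$ is an ``interior'' vertex cut off by the diagonal $\{a-1,a+1\}$. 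Deleting vertex $a$ yields a triangulation $\mathcal T'$ of the $(n-1)$-gon on the remaining vertices, with $\mathcal T=\mathcal T'\cup\{T_0\}$.

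\emph{First} I would reduce to the case where the ear can be taken to involve vertex $2$ or vertex $n-1$, so that removing it leaves the diagonal $\{1,n\}$ intact and the induction hypothesis applies directly to compute $v_{1n}$ on the smaller polygon. Concretely, if the ear is $T_0=\{1,2,3\}$ cutting off vertex $2$, then the relabelled $(n-1)$-gon has vertices $1,3,4,\ldots,n$, and by induction $v_{1n}=\sum_{T\in\mathcal T'}\delta_T(v)$. It then suffices to show that this sum, computed on the smaller polygon, differs from the full sum $\sum_{T\in\mathcal T}\delta_T(v)$ by exactly accounting for the extra triangle $T_0$; but since $\mathcal T=\mathcal T'\cup\{T_0\}$ as \emph{sets of triangles} and each $\delta_T$ is the same function on $v$ regardless of which polygon we view $T$ as sitting in, the two sums literally coincide once we verify that the diagonal edge weights are consistent — that is, that removing vertex $2$ does not change the value $v_{1n}$ we are trying to compute. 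The key point is that $v_{1n}$ is a single coordinate of the fixed admitted vector $v$; it does not depend on the polygon, so applying the induction hypothesis to \emph{any} triangulation of the $(n-1)$-gon on $\{1,3,4,\ldots,n\}$ yields the \emph{same} value $v_{1n}$.

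\emph{The cleaner route}, which I would actually carry out, is to prove a slightly stronger statement by induction: for \emph{every} diagonal or edge $\{i,k\}$ of the polygon and every triangulation $\mathcal T$ of the region cut off on one side of $\{i,k\}$ (the sub-polygon on vertices $i,i+1,\ldots,k$), one has $v_{ik}=\sum_{T\in\mathcal T}\delta_T(v)$. This is the natural inductive generalization, and the statement of the theorem is the special case $i=1$, $k=n$. For the induction, pick the unique triangle $T_0=\{i,m,k\}$ of $\mathcal T$ having $\{i,k\}$ as an edge, with $i<m<k$. It splits the sub-polygon into two sub-polygons on vertices $\{i,\ldots,m\}$ and $\{m,\ldots,k\}$, triangulated by $\mathcal T_1,\mathcal T_2$ respectively, with $\mathcal T=\mathcal T_1\cup\mathcal T_2\cup\{T_0\}$ (a disjoint union of triangle-sets, when $m\neq i+1$ and $m\neq k-1$; the degenerate cases are handled by $v_{i,i+1}=0$ being an empty sum). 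By induction, $\sum_{T\in\mathcal T_1}\delta_T(v)=v_{im}$ and $\sum_{T\in\mathcal T_2}\delta_T(v)=v_{mk}$, so
\begin{align*}
\sum_{T\in\mathcal T}\delta_T(v)
&=v_{im}+v_{mk}+\delta_{imk}(v)\\
&=v_{im}+v_{mk}+(v_{ik}-v_{im}-v_{mk})\\
&=v_{ik},
\end{align*}
using the definition $\delta_{imk}(v)=v_{ik}-v_{im}-v_{mk}$. This closes the induction and proves the theorem.

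\emph{The main obstacle} I anticipate is purely bookkeeping rather than conceptual: one must argue that every triangulation does contain a triangle with the prescribed edge $\{i,k\}$ and that the split into $\mathcal T_1$, $\mathcal T_2$, $T_0$ is genuinely a partition of $\mathcal T$, with the degenerate boundary cases ($m=i+1$ or $m=k-1$, where one sub-polygon collapses to an edge and contributes an empty sum equal to $v_{i,i+1}=0$) correctly absorbed. Once the strengthened inductive statement is adopted, the Ptolemy relation \eqref{ptolemee} is not even needed directly — only the definition of $\delta$ and the telescoping that it produces — which is why I prefer this formulation to a naive ear-removal argument on the fixed polygon.
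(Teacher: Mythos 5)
Your proof is correct and follows essentially the same route as the paper: both arguments peel off the unique triangle $\{i,m,k\}$ (resp.\ $\{1,p,n\}$) resting on the base edge, split the polygon into the two sub-polygons $\{i,\ldots,m\}$ and $\{m,\ldots,k\}$, apply induction to each, and conclude via $v_{ik}=v_{im}+v_{mk}+\delta_{imk}(v)$. Your explicit formulation of the strengthened inductive statement (for arbitrary $v_{ik}$ over sub-polygons) is exactly what the paper invokes implicitly with ``by induction (after appropriate reindexing)'' and records as a remark right after its proof, so there is no substantive difference.
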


\begin{proof} Consider the unique triangle $\{1,2,p\}$ in $\mathcal T$ (it is the triangle containing the edge $12$ of the $n$-gon). By induction, we have $v_{1p}=\sum_T\delta_T(v)$, sum over all triangles in $\mathcal T$ contained in the sub-$p$-gon with vertices $1,2,\ldots,p$, which form a triangulation of this $p$-gon. Similarly, the triangles contained in the $n-p+1$-gon with vertices $p,p+1,\ldots, n$ form a triangulation of it; hence by induction (after appropriate reindexing), $v_{pn}=\sum_T\delta_T(v)$, sum over all triangles of this triangulation. If we add the the triangle with vertices $1,p,n$ to these two triangulations we obtain the original triangulation. Now $v_{1n}=v_{1p}+v_{pn}+\delta_{1pn}$, which concludes the proof.
\end{proof}

More generally, for any $i<j$, the coefficient $v_{ij}$ of an admitted vector $v$ is equal to $\sum_T\delta_T(v)$, where the sum is over all triangles of a triangulation of the $j-i+1$-gon with vertices $i,i+1,\ldots, j$.

Given a triangulation, consider a convex quadrangle such that one of its diagonal determines two triangles of the triangulation. If we replace in the triangulation these two triangles by the two others of the quadrangle, determined by the other diagonal, then we obtain a new triangulation: we say that the new triangulation is obtained by {\em mutation}. It is known that given two triangulations of the $n$-gon, one may transform one into the other by a sequence of mutations.

Note that the equation (\ref{ptolemee}) implies the fact that the sum $\sum_T\delta_T(v)$, over all triangles in a triangulation, is invariant under mutation.

We note finally that the sequence $(\delta_{ijk}(v))_{1\leq i<j<k\leq n}$ is completely defined by the subsequence $(\delta_{1ij}(v))_{i<j}$, as follows from Eq. (\ref{ptolemee}), by considering the quadrangle $1,i,j,k$. The latter sequence moreover appears as the inversion sequence of the permutation $w=1w'$ such that $V(w)=v$ (see the proof of Theorem \ref{iso}).

\section{An interval in the affine symmetric group}\label{-interval}

\subsection{Preliminaries on $\tilde S_n$}

We begin by recalling some definitions and results on the {\em affine symmetric group}. Details may be found in the book by Anders Bj\" orner and Francesco Brenti \cite{BB}. This group is denoted $\tilde S_n$. It is the group of permutations $f$ of $\mathbb Z$ satisfying the two conditions:
\begin{itemize}
\item $f(x+n)=f(x)+n$ for any integer $x$;
\item $f(1)+f(2)+\cdots+f(n)=n(n+1)/2$.
\end{itemize}

An element of $\tilde S_n$ is called an {\em affine permutation}. Each usual permutation in $S_n$ extends uniquely to an affine permutation, so 
that $\tilde S_n$ contains the symmetric group $S_n$ as a subgroup. The group $\tilde S_n$ is generated by the {\em adjacent affine 
transpositions} $s_0, s_1,\ldots,s_{n-1}$, defined by the condition that $s_i$ exchanges $i$ and $i+1$ if $i=0,\ldots, n-1$. 
Each affine permutation $f$ is represented by a {\em window} $[f(1),f(2),\ldots,f(n)]$ and we write $f=[f(1),f(2),\ldots,f(n)]$. For example in $\tilde{S_4}$, we have the elements 
$s_1=[2,1,3,4]$, $s_0=[0,2,3,5]$, and $[-3,5,0,8]$. The $n$ numbers in a window are pairwise noncongruent modulo $n$, and their sum is 
$n(n+1)/2$; windows are characterized by these two previous properties. 

The {\em position inversion set} of $f\in \tilde S_n$ is the set 
$$
Inv_p(f):=\{(i,j)\mid 1\leq i\leq n, i<j, f(i)>f(j)\},$$
which is also equal to 
$$
\{(i,j)\mid 1\leq i\leq n, i<j, f(i)\geq f(j)\},
$$
 since $i<j$ implies $f(i)\neq f(j)$.

The affine symmetric group is a Coxeter group, generated by the $s_i$'s, with length function $\ell(f)=\mid Inv_p(f)\mid$. The {\em left weak order}, denoted simply $\leq$, is defined on $\tilde S_n$ by its covering relation: $g$ covers $f$ if for some $i$, one has $g=s_if$ and $\ell(g)=\ell(f)+1$. 

By definition of the product in $\tilde S_n$, one has the following lemma (a variant of \cite{BB} p. 260).

\begin{lemma}\label{product}
Let $k=0,\ldots,n-1$ and $[a_1,\ldots,a_n]\in \tilde S_n$. Define $i,j$ in $\{1,2,\ldots,n\}$ by $a_i\equiv 
k+1 \mod n$ and $a_j\equiv k\mod n$. 
Then $s_k[a_1,\ldots,a_n]=[b_1,\ldots,b_n]$, with $b_j=a_j+1$, $b_i=a_i-1$, and $b_l=a_l$ if $a_l$ is noncongruent to $k$ nor to $k+1$ modulo $n$.
\end{lemma}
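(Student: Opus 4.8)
The statement to prove is Lemma~\ref{product}, which describes explicitly the effect of left-multiplying an affine permutation $[a_1,\ldots,a_n]$ by an adjacent generator $s_k$. The plan is to work directly from the definitions of the product in $\tilde S_n$ and of the generators $s_k$, unwinding what $s_k \circ f$ does as a composition of permutations of $\mathbb Z$. The key conceptual point is that $s_k$, as a permutation of $\mathbb Z$, exchanges every integer congruent to $k$ modulo $n$ with the adjacent integer congruent to $k+1$ modulo $n$, while fixing all integers in other residue classes; this follows from the defining property $s_k(x+n)=s_k(x)+n$ together with the fact that $s_k$ exchanges $k$ and $k+1$. Once this is made precise, the lemma is essentially a bookkeeping computation on windows.

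First I would record the explicit formula for $s_k$ as a function on $\mathbb Z$: for any integer $x$, if $x\equiv k \pmod n$ then $s_k(x)=x+1$, if $x\equiv k+1 \pmod n$ then $s_k(x)=x-1$, and $s_k(x)=x$ otherwise. This is the unique affine permutation extending the transposition of $k$ and $k+1$, and its correctness is immediate from the two defining conditions of $\tilde S_n$ applied to $s_k$. Next I would compute the window of $s_k f$ entry by entry: the $l$-th entry is $(s_k f)(l)=s_k(f(l))=s_k(a_l)$. Applying the formula for $s_k$, the entry $a_l$ is increased by $1$ precisely when $a_l\equiv k\pmod n$, decreased by $1$ precisely when $a_l\equiv k+1\pmod n$, and left unchanged otherwise.

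The final step is to match this with the statement's indices $i$ and $j$. Since the window entries $a_1,\ldots,a_n$ are pairwise noncongruent modulo $n$ (a characterizing property of windows recalled in the text), there is exactly one index $j$ with $a_j\equiv k\pmod n$ and exactly one index $i$ with $a_i\equiv k+1\pmod n$. Thus exactly one entry is raised by $1$, namely $b_j=a_j+1$, exactly one is lowered by $1$, namely $b_i=a_i-1$, and the remaining entries satisfy $b_l=a_l$, which is exactly the asserted conclusion. I would close by noting that $[b_1,\ldots,b_n]$ is indeed a valid window, since the total $b_1+\cdots+b_n$ equals $a_1+\cdots+a_n$ (the $+1$ and $-1$ cancel) and hence is still $n(n+1)/2$, while the residues are merely permuted, so they remain pairwise noncongruent.

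I do not expect any genuine obstacle here: the lemma is a direct unwinding of the definition of the group operation, and the only thing requiring care is the correct handling of the case $k=0$, where the residue class ``$k$'' means the class of $0\equiv n$, so that the relevant entries are those congruent to $0$ and to $1$ modulo $n$. The one-line extremal bookkeeping — that the uniqueness of $i$ and $j$ comes from the window property — is the step most worth stating explicitly, since it is what guarantees that precisely two entries change. Everything else is routine arithmetic modulo $n$.
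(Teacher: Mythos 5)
Your proof is correct and takes exactly the route the paper intends: the paper states this lemma without proof (``by definition of the product in $\tilde S_n$''), and your argument is the direct unwinding of that definition, namely computing $s_k(a_l)$ residue class by residue class and invoking the window property for the uniqueness of $i$ and $j$. Nothing is missing; the extra check that $[b_1,\ldots,b_n]$ is a valid window is not even needed, since $s_k f$ lies in $\tilde S_n$ automatically as a product of group elements.
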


We also need the following result, which follows from Theorem 16 in \cite{EE} (see also \cite{BB0} Theorem 5.3).

\begin{lemma}\label{inv} Let $f,g\in \tilde S_n$ and $k=0,\ldots,n-1$. Then:

(i) the sets $Inv_p(f)$ and $Inv_p(s_kf)$ differ by one element;

(ii) $f\leq g$ if and only if $Inv_p(f)\subset Inv_p(g)$.
\end{lemma}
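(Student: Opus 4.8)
The plan is to prove (i) directly from the concrete action of $s_k$ and then bootstrap (ii) from (i). For (i), I would use that left multiplication acts on values: $(s_kf)(x)=s_k(f(x))$, where $s_k$ swaps the two consecutive integers $k+mn$ and $k+1+mn$ for every $m$. Writing $x_0=f^{-1}(k)$ and $y_0=f^{-1}(k+1)$, the relation $f(x+n)=f(x)+n$ shows that the positions carrying a value $\equiv k$ (resp. $\equiv k+1$) modulo $n$ form the single residue classes $A=\{x_0+mn\}$ (resp. $B=\{y_0+mn\}$), which are distinct. Passing from $f$ to $s_kf$ raises by $1$ every value at a position of $A$ and lowers by $1$ every value at a position of $B$, in agreement with Lemma \ref{product}, and leaves all other values untouched. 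A short check then shows that the only position pairs whose value order is reversed are the aligned pairs $\{x_0+mn,\,y_0+mn\}$ (same $m$): a position outside $A\cup B$ keeps its order with both swapped values since these are consecutive integers and its own value is not strictly between them, and two positions $x_0+mn$, $y_0+m'n$ with $m\neq m'$ keep their order by a direct computation.

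I would then invoke periodicity once more. Among the pairs $\{x_0+mn,\,y_0+mn\}$, the smaller index is always $x_0+mn$ or always $y_0+mn$ (according to the sign of $x_0-y_0$), and as $m$ runs over $\mathbb Z$ this smaller index runs over a full residue class modulo $n$, hence lands in $\{1,\dots,n\}$ for exactly one value of $m$. For that unique $m$ the pair contributes to the position inversion set, while all others have first coordinate outside the fundamental window and are irrelevant to $Inv_p$. Since this single pair has opposite inversion status in $f$ and in $s_kf$ while every other pair is unchanged, the symmetric difference $Inv_p(f)\,\triangle\,Inv_p(s_kf)$ is a singleton, which is (i). Inspecting whether $x_0<y_0$ moreover tells us that the element is \emph{added} precisely when $\ell(s_kf)=\ell(f)+1$.

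For (ii), the forward implication is immediate: if $f\le g$, take a saturated chain $f=f_0\lessdot\cdots\lessdot f_m=g$ of left multiplications by generators with $\ell$ increasing by $1$ at each step; by (i) each step enlarges $Inv_p$ by exactly one pair, so $Inv_p(f)\subseteq Inv_p(g)$. For the converse I would argue by induction on $\ell(g)-\ell(f)=|Inv_p(g)|-|Inv_p(f)|$. When this is $0$ the inclusion forces $Inv_p(f)=Inv_p(g)$, hence $f=g$, since an affine permutation is determined by its position inversion set (the affine analogue of the fact, already used in Section \ref{vectors}, that a permutation is determined by its inversions). When it is positive, the crux is an exchange step: I must produce a generator $s_k$ with $\ell(s_kf)=\ell(f)+1$ whose unique added inversion, identified explicitly in the proof of (i), already lies in $Inv_p(g)$. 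Granting this, $Inv_p(s_kf)\subseteq Inv_p(g)$ with $\ell(g)-\ell(s_kf)$ one smaller, so induction gives $s_kf\le g$ and therefore $f\lessdot s_kf\le g$.

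The main obstacle is this exchange step. I would find the required generator among the position pairs in $Inv_p(g)\setminus Inv_p(f)$ (those with $g(i)>g(j)$ but $f(i)<f(j)$), selecting one for which the two values $f(i),f(j)$ are consecutive; the biconvexity of inversion sets (transitivity together with the complementary condition recalled in Section \ref{vectors}) is what guarantees that a suitably minimal such pair corresponds to consecutive values $k,k+1$ with $f^{-1}(k)<f^{-1}(k+1)$, and left multiplication by that $s_k$ then adds exactly this pair. This is precisely the point where, alternatively, one may appeal to Theorem 16 of \cite{EE}, or to the general Coxeter-theoretic characterization of the weak order by inclusion of inversion sets.
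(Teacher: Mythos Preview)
The paper does not prove this lemma at all: it merely records that the result ``follows from Theorem 16 in \cite{EE} (see also \cite{BB0} Theorem 5.3)'' and moves on. Your proposal therefore supplies strictly more than the paper does. Your argument for (i) is correct and self-contained: the case analysis showing that only the aligned pairs $\{x_0+mn,\,y_0+mn\}$ flip, together with the periodicity argument isolating the unique such pair with smaller coordinate in $\{1,\dots,n\}$, is exactly right and matches the spirit of Lemma \ref{product}.

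For (ii), your induction scheme is the standard one and the forward direction is immediate from (i). You correctly isolate the exchange step as the only substantive point and give a plausible sketch via biconvexity, but as written this is not a complete argument: one must actually verify that a pair $(i,j)\in Inv_p(g)\setminus Inv_p(f)$ chosen to minimize $f(j)-f(i)$ satisfies $f(j)-f(i)=1$, which needs the affine biconvexity conditions spelled out and tracked through the window normalization. You are candid about this and note that one may instead invoke \cite{EE} or the general Coxeter characterization of weak order by inversion inclusion---which is precisely the paper's route. One small caveat: the claim that an affine permutation is determined by its position inversion set is the \emph{affine} (infinite Coxeter) statement, not the finite-$S_n$ fact used in Section \ref{vectors}; it is true, but warrants its own citation rather than an appeal to that earlier remark.
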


\subsection{A poset isomorphism}\label{--interval}

We define a mapping $F$ from the set of admitted vectors into $\mathbb Z^n$, where elements of the latter set are 
denoted by brackets $[a_1,\ldots,a_n]$. Let $v$ be an admitted vector. Then we define $F(v)=[a_1,\ldots,a_n]$, by 
\begin{equation}\label{F}
a_i=i+\sum_{p<i}v_{pi}-\sum_{i<p}v_{ip}.
\end{equation}

If we denote by $e_1,\ldots,e_n$ the canonical basis of $\mathbb Z^n$, we have equivalently 
$$
F(v)=[1,2,\ldots,n]+\sum_{r<s}v_{rs}(e_s-e_r).
$$

Now, consider the element $f_c\in \tilde S_n$ defined by its window:
$f_c=[c_1,c_2,\ldots,c_n]$ with $c_1=-(1/2)n(n-3)$ and $c_{i+1}=c_i+n-1$ for $i=1,\ldots,n-1$. Since $n-1$ is a 
generator of the additive group of integers modulo $n$, the $c_i$ are pairwise distinct modulo $n$. Moreover the sum 
of all $c_i$ is 
\begin{align*}
& (c_1+0)+(c_1+n-1)+(c_1+2(n-1))+\cdots+(c_1+(n-1)(n-1)) \\
=  &~  nc_1+\frac{(n-1)n(n-1)}{2} \\
=&  -\frac{1}{2}n^2(n-3)+\frac{1}{2}(n-1)^2n \\
= &~  \frac{1}{2}n((n-1)^2-n(n-3))\\
 = &  \frac{1}{2}n(n^2-2n+1-n^2+3n) \\
 =&\frac{n(n+1)}{2}.
\end{align*}

This shows that indeed $f_c\in \tilde S_n$. 

As an example, for $n=4$, $f_c=[-2,1,4,7]$ and for $n=5$, $f_c=[-5,-1,3,7,11]$. 

\begin{theorem}\label{interval} The mapping $F$ is a poset isomorphism from the set of admitted vectors onto the 
interval $[\id, f_c]$ with the left weak order.
\end{theorem}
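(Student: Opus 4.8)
The plan is to show that $F$ is a well-defined bijection from admitted vectors onto the interval $[\id,f_c]$ and that it is an isomorphism of posets, by reducing everything to the behaviour of position inversion sets under the generators $s_k$. First I would check that $F(v)$ is genuinely an affine permutation: the window entries $a_i = i + \sum_{p<i}v_{pi}-\sum_{i<p}v_{ip}$ visibly sum to $n(n+1)/2$ (the correction terms $v_{rs}(e_s-e_r)$ contribute zero to the total), so the real content is that the $a_i$ are pairwise noncongruent modulo $n$; this is where I expect to use admittedness in an essential way, translating the inequalities (\ref{admitted}) into statements about the $a_i \bmod n$. I would also record that $F$ of the null vector is $[1,2,\dots,n]=\id$ and that $F$ of the largest admitted vector $w$ with $w_{ij}=j-i-1$ (Corollary \ref{largest}) is exactly $f_c$: a direct computation using $\sum_{p<i}(i-p-1)-\sum_{i<p}(p-i-1)$ should reproduce the arithmetic progression $c_i$ with $c_1=-\tfrac12 n(n-3)$.

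Next I would establish that $F$ transports the covering relation of admitted vectors to that of the left weak order. By Corollary \ref{label}, $v'$ covers $v$ exactly when $v'_{ij}=v_{ij}+1$ for a single pair $(i,j)$ satisfying the descent conditions (\ref{nathan}); under $F$ this changes the window by $a_j\mapsto a_j+1$, $a_i\mapsto a_i-1$, leaving the others fixed. Comparing this with Lemma \ref{product}, I would identify the generator $s_k$ (with $k\equiv a_j \bmod n$, $k+1\equiv a_i\bmod n$) whose left multiplication realizes precisely this move, so that $F(v')=s_k F(v)$. To conclude this is a genuine cover in the weak order I would invoke Lemma \ref{inv}(i): multiplication by $s_k$ changes $\ell$ by exactly $\pm 1$, and the admissibility condition for the move (the $\delta$-conditions of Lemma \ref{monter}) should be exactly what forces the length to \emph{increase} rather than decrease, i.e.\ the single new position inversion to be added rather than removed.

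From the cover-compatibility I would deduce that $F$ is injective and order-preserving, that $F$ maps the bottom to $\id$ and the top to $f_c$, and hence that the image lands inside $[\id,f_c]$; combined with Lemma \ref{inv}(ii) characterizing $\leq$ by inclusion of $Inv_p$, order-reflection follows since any chain from $\id$ to $F(v)$ pulls back to a saturated chain of admitted vectors. The remaining and, I expect, hardest point is \textbf{surjectivity onto the full interval}: I must show every $g$ with $\id\le g\le f_c$ arises as some $F(v)$. The natural route is to argue that the rank (length) matches on both sides and that covers correspond bijectively, so that a counting or a saturated-chain argument forces the image to exhaust $[\id,f_c]$; concretely, starting from any $g\le f_c$ one descends by left multiplications $s_k g < g$ and shows each such step stays in the image of $F$, reaching $\id=F(0)$, whence $g$ itself is in the image. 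Verifying that the descent steps available in $[\id,f_c]$ are exactly the ones coming from admitted-vector moves is the crux, and it will lean on the precise form of $f_c$ to rule out any extra inversions outside the range dictated by the conditions (\ref{nathan}).

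The main obstacle, then, is controlling surjectivity: it is easy to see $F$ is an order-embedding whose image is an interval-like set, but proving the image is \emph{all} of $[\id,f_c]$ requires showing that no affine permutation below $f_c$ can have a position inversion set incompatible with the combinatorics of some admitted vector. I would handle this by characterizing $Inv_p(f_c)$ explicitly from the window $c_i$ and showing, via Lemma \ref{inv}(ii), that inclusion $Inv_p(g)\subseteq Inv_p(f_c)$ forces $g$ to have a window of the form (\ref{F}) for a unique $v\in\N^T$ satisfying (\ref{admitted}) — essentially inverting the formula $a_i=i+\sum_{p<i}v_{pi}-\sum_{i<p}v_{ip}$ and checking the recovered $v$ is admitted precisely because $g\le f_c$.
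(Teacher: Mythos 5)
Your outline reproduces the paper's skeleton (endpoints $F(0)=\id$ and $F(w)=f_c$; covers of admitted vectors realized as left multiplication by a generator $s_k$, as in Lemma \ref{cover2}; the order controlled through position inversion sets), but the two load-bearing steps, injectivity and surjectivity, are only announced, and the single idea that makes both of them work in the paper is absent. That idea is the arithmetic identity of Lemma \ref{ai-aj}: if $F(v)=[a_1,\ldots,a_n]$ then $a_j-a_i=nv_{ij}+r$ with $r\in\{1,\ldots,n-1\}$, hence $v_{ij}=\lfloor\frac{a_j-a_i}{n}\rfloor$. From it the paper gets, at once: the pairwise noncongruence mod $n$ (well-definedness); injectivity, because the window recovers $v$; an explicit inverse $G(f)_{ij}=\lfloor\frac{a_j-a_i}{n}\rfloor$ defined on all of $[\id,f_c]$, whose values are admitted because $\lfloor\frac{x+y}{n}\rfloor-\lfloor\frac{x}{n}\rfloor-\lfloor\frac{y}{n}\rfloor\in\{0,1\}$ and nonnegative because every $f\leq f_c$ has an increasing window (its inversions lie in $Inv_p(f_c)$ by Lemma \ref{inv}); and order-reflection, via Lemma \ref{Inv-ordre}, which characterizes $f\leq g$ for increasing windows by componentwise comparison of these floors. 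Even then one needs the nontrivial Lemma \ref{FG=id} to check $F\circ G=\id$, i.e.\ that a window is determined by its floor data. Note also that your intermediate claim that injectivity follows ``from cover-compatibility'' is a non sequitur: a cover- and order-preserving map of graded posets need not be injective, and the underlying linear map $v\mapsto\sum_{r<s}v_{rs}(e_s-e_r)$ from $\mathbb{Z}^T$ to $\mathbb{Z}^n$ has a huge kernel ($\binom{n}{2}$ unknowns versus $n$ equations), so injectivity must use admittedness; the floor formula is exactly how.

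The same gap reappears in both of your proposed surjectivity routes. ``Essentially inverting the formula'' is precisely the missing content, since the defining system for $F$ cannot be inverted linearly; without $v_{ij}=\lfloor\frac{a_j-a_i}{n}\rfloor$ you have no candidate $v$ to check. The alternative route (descend from $g$ by steps $s_kg<g$, staying in the image of $F$) needs cover-reflection: if $F(v)\leq f_c$ and $s_kF(v)$ covers $F(v)$ inside the interval, you must show $s_kF(v)=F(v')$ for an \emph{admitted} $v'$, i.e.\ deduce the conditions (\ref{nathan}) for the pair of positions moved by $s_k$ from the sole hypothesis $s_kF(v)\leq f_c$. That verification is not addressed in your plan and is not easier than constructing $G$ directly; it is where the proof would stall as written.
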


Notice that this result has been proven by the second-named author for any affine Weyl group $W_a$ in \cite{C} by using a geometrical approach. We give here a combinatorial proof and further, we construct the element $f_c$.

See Figure \ref{ASGIP} for this interval. We first need several preliminary results.



\begin{lemma}\label{ai-aj} Let $F(v)=[a_1,\ldots,a_n]$ and $1\leq i<j\leq n$. Then 
$$
a_j-a_i=j-i+nv_{ij}-\sum_{i<p<j}\delta_{ipj}(v)+\sum_{p<i}\delta_{pij}(v)+\sum_{j<p}\delta_{ijp}(v).
$$
As a consequence, $a_j-a_i=nv_{ij}+r$, with $r\in\{1,\ldots,n-1\}$ and $\lfloor\frac{a_j-a_i}{n}\rfloor=v_{ij}$.
\end{lemma}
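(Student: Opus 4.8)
The plan is to establish the displayed identity by a direct computation from the defining formula (\ref{F}) of $F(v)$, organizing the bookkeeping around the functions $\delta$, and then to read off the consequence from the fact that every $\delta$ takes values in $\{0,1\}$ on an admitted vector. First I would substitute the definition of $a_i$ and $a_j$ and subtract, obtaining
$$a_j - a_i = (j-i) + \Bigl(\sum_{p<j} v_{pj} - \sum_{p<i} v_{pi}\Bigr) - \Bigl(\sum_{j<p} v_{jp} - \sum_{i<p} v_{ip}\Bigr).$$
The key organizational step is to split the index $p$ into the three regions $p<i$, $i<p<j$, and $j<p$, isolating the two terms $p=i$ and $p=j$, each of which contributes one copy of $v_{ij}$.

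After this regrouping, each region collapses into one of the three $\delta$ expressions up to a multiple of $v_{ij}$, via the identities read directly off the definition of $\delta$: for $p<i$ one uses $v_{pj}-v_{pi} = \delta_{pij}(v) + v_{ij}$; for $i<p<j$ one uses $v_{ip}+v_{pj} = v_{ij} - \delta_{ipj}(v)$; and for $j<p$ one uses $v_{ip}-v_{jp} = \delta_{ijp}(v)+v_{ij}$. Collecting the coefficient of $v_{ij}$ is the arithmetic heart of the argument: the two isolated terms contribute $2$, the region $p<i$ contributes $i-1$, the region $i<p<j$ contributes $j-i-1$, and the region $j<p$ contributes $n-j$, for a total of $2+(i-1)+(j-i-1)+(n-j)=n$. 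This term-counting is the only place where an error could slip in, so it is the step I would carry out most carefully; everything else is substitution. The remaining sums of $\delta$'s assemble exactly into the right-hand side of the statement.

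For the consequence, set $r := (a_j-a_i) - n v_{ij}$, so that by the identity just proved, $r = (j-i) - \sum_{i<p<j}\delta_{ipj}(v) + \sum_{p<i}\delta_{pij}(v) + \sum_{j<p}\delta_{ijp}(v)$. Since $v$ is admitted, every $\delta$ lies in $\{0,1\}$ by (\ref{admitted}), and the three sums range over $j-i-1$, $i-1$, and $n-j$ indices respectively. Dropping the nonnegative added sums and using the largest possible subtracted sum gives the lower bound $r \ge (j-i)-(j-i-1)=1$; dropping the subtracted sum and maximizing the two added sums gives the upper bound $r \le (j-i)+(i-1)+(n-j)=n-1$. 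Hence $0<r<n$, so $a_j-a_i = n v_{ij}+r$ with $r\in\{1,\dots,n-1\}$ and $\lfloor (a_j-a_i)/n\rfloor = v_{ij}$, which completes the proof.
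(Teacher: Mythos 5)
Your proof is correct and follows essentially the same route as the paper: expand $a_j-a_i$ from the definition of $F$, split the index range into $p<i$, $i<p<j$, $j<p$ (isolating the two $v_{ij}$ terms), rewrite each region via the $\delta$ identities so the coefficient of $v_{ij}$ totals $n$, and then bound $r$ using $\delta\in\{0,1\}$. The bounds $1\le r\le n-1$ you obtain are exactly those in the paper's proof, so there is nothing to add.
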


\begin{proof} We have 
\begin{align*}
a_j-a_i & =j-i+\sum_{p<j}v_{pj}-\sum_{j<p}v_{jp}-\sum_{p<i}v_{pi}+\sum_{i<p}v_{ip} \\
            & = j-i+\sum_{p<i}v_{pj}+v_{ij}+ \sum_{i<p<j}v_{pj}-\sum_{j<p}v_{jp}-\sum_{p<i}v_{pi}+\sum_{i<p<j}v_{ip}+v_{ij}+\sum_{j<p}v_{ip} \\
            & =j-i+2v_{ij}+\sum_{i<p<j}(v_{ip}+v_{pj})+\sum_{p<i}(v_{pj}-v_{pi})+\sum_{j<p}(v_{ip}-v_{jp}) \\
            & =j-i+2v_{ij}+\sum_{i<p<j} (v_{ij}-\delta_{ipj}(v))+\sum_{p<i}(v_{ij}+\delta_{pij}(v))+\sum_{j<p}(v_{ij}+\delta_{ijp}(v)) \\
            & =j-i+nv_{ij}-\sum_{i<p<j}\delta_{ipj}(v)+\sum_{p<i}\delta_{pij}(v)+\sum_{j<p}\delta_{ijp}(v).
\end{align*}

Let $r=j-i-\sum\limits_{i<p<j}\delta_{ipj}(v)+\sum\limits_{p<i}\delta_{pij}(v)+\sum\limits_{j<p}\delta_{ijp}(v)$. Then $a_j-a_i=nv_{ij}+r.$ Moreover, since the $\delta$'s take the values 0 and 1, $r$ is in the interval $[j-i-(j-i-1),j-i+i-1+n-j]=[1,n-1]$.
\end{proof}

\begin{proposition}\label{ImF} The image of $F$ is contained in the set of windows. If $F(v)=[a_1,\ldots,a_n]$, then $a_1<\cdots<a_n$.
\end{proposition}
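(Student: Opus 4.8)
The plan is to prove two things: first that the $a_i$ produced by $F$ form a valid window (pairwise noncongruent modulo $n$, correct sum), and second the stronger assertion that $a_1 < a_2 < \cdots < a_n$. I will prove the strict monotonicity directly and then observe that it immediately forces the window conditions, at least the noncongruence part, while the sum condition follows by a separate computation.

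For the monotonicity, the key tool is Lemma \ref{ai-aj}, which I would apply to consecutive indices. Taking $j = i+1$, the formula gives
\begin{equation*}
a_{i+1} - a_i = n v_{i,i+1} + r, \qquad r \in \{1,\ldots,n-1\}.
\end{equation*}
Since $v$ is admitted we have $v_{i,i+1} = 0$, so $a_{i+1}-a_i = r \geq 1 > 0$. This proves $a_i < a_{i+1}$ for every $i$, hence $a_1 < \cdots < a_n$. This is the heart of the argument and it is short because Lemma \ref{ai-aj} has already done the hard combinatorial bookkeeping.

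From strict monotonicity I would then deduce that the $a_i$ are pairwise distinct, and I must upgrade this to pairwise noncongruence modulo $n$, which is what a genuine window requires. For arbitrary $i<j$, Lemma \ref{ai-aj} gives $a_j - a_i = n v_{ij} + r$ with $r \in \{1,\ldots,n-1\}$; since $r \not\equiv 0 \pmod n$, we get $a_j \not\equiv a_i \pmod n$, so the $a_i$ are indeed pairwise noncongruent modulo $n$. For the sum condition, I would compute $\sum_i a_i$ directly from the defining equation (\ref{F}): writing $a_i = i + \sum_{p<i} v_{pi} - \sum_{i<p} v_{ip}$ and summing over $i$, each term $v_{rs}$ (with $r<s$) appears once with a $+$ sign (from $a_s$) and once with a $-$ sign (from $a_r$), so the $v$-contributions telescope to zero and $\sum_i a_i = \sum_i i = n(n+1)/2$.

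The main obstacle, if any, is purely a matter of making sure the two window-defining properties are both verified, since the proposition as stated only asserts the ordering $a_1 < \cdots < a_n$ and the containment of the image in the set of windows. The ordering is essentially immediate once Lemma \ref{ai-aj} is invoked with $j=i+1$; the subtlety is only in not forgetting the sum computation and the modular-noncongruence step, both of which are routine. I expect no real difficulty here: the preceding lemma has front-loaded all the delicate estimates, reducing this proposition to plugging in $v_{i,i+1}=0$ and reading off the sign of $r$.
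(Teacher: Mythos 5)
Your proof is correct and follows essentially the same route as the paper: both rest entirely on Lemma \ref{ai-aj} (giving $a_j-a_i=nv_{ij}+r$ with $r\in\{1,\ldots,n-1\}$, hence noncongruence mod $n$ and positivity) together with the telescoping computation of $\sum_i a_i$. The only cosmetic difference is that you derive monotonicity from consecutive differences via $v_{i,i+1}=0$, while the paper reads it off for arbitrary $i<j$ from $v_{ij}\geq 0$; both are immediate consequences of the same lemma.
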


\begin{proof} For the first assertion, it is enough to show that the $a_i$ as defined above are pairwise noncongruent modulo $n$, and that their sum is $n(n+1)/2$.

Let $i<j$. Because of Lemma \ref{ai-aj} we already know that $a_i-a_j$ is nonzero modulo $n$. Further we have 
\begin{align*}
\sum\limits_{i=1}^na_i  & =\sum\limits_{i=1}^n(i+\sum_{p<i}v_{pi}-\sum_{i<p}v_{ip}) 
			    = \sum\limits_{i=1}^ni+\sum_{p<i}v_{pi}-\sum_{i<p}v_{ip} 
			    = \sum\limits_{i=1}^ni 
			    = \frac{n(n+1)}{2}.
\end{align*}

For the last assertion, let $1\leq i<j\leq n$. By Lemma \ref{ai-aj}, $a_j-a_i>0$.
\end{proof}

\vspace*{\stretch{1}}
\begin{figure}[h!]
\begin{center}
\includegraphics[scale=0.65]{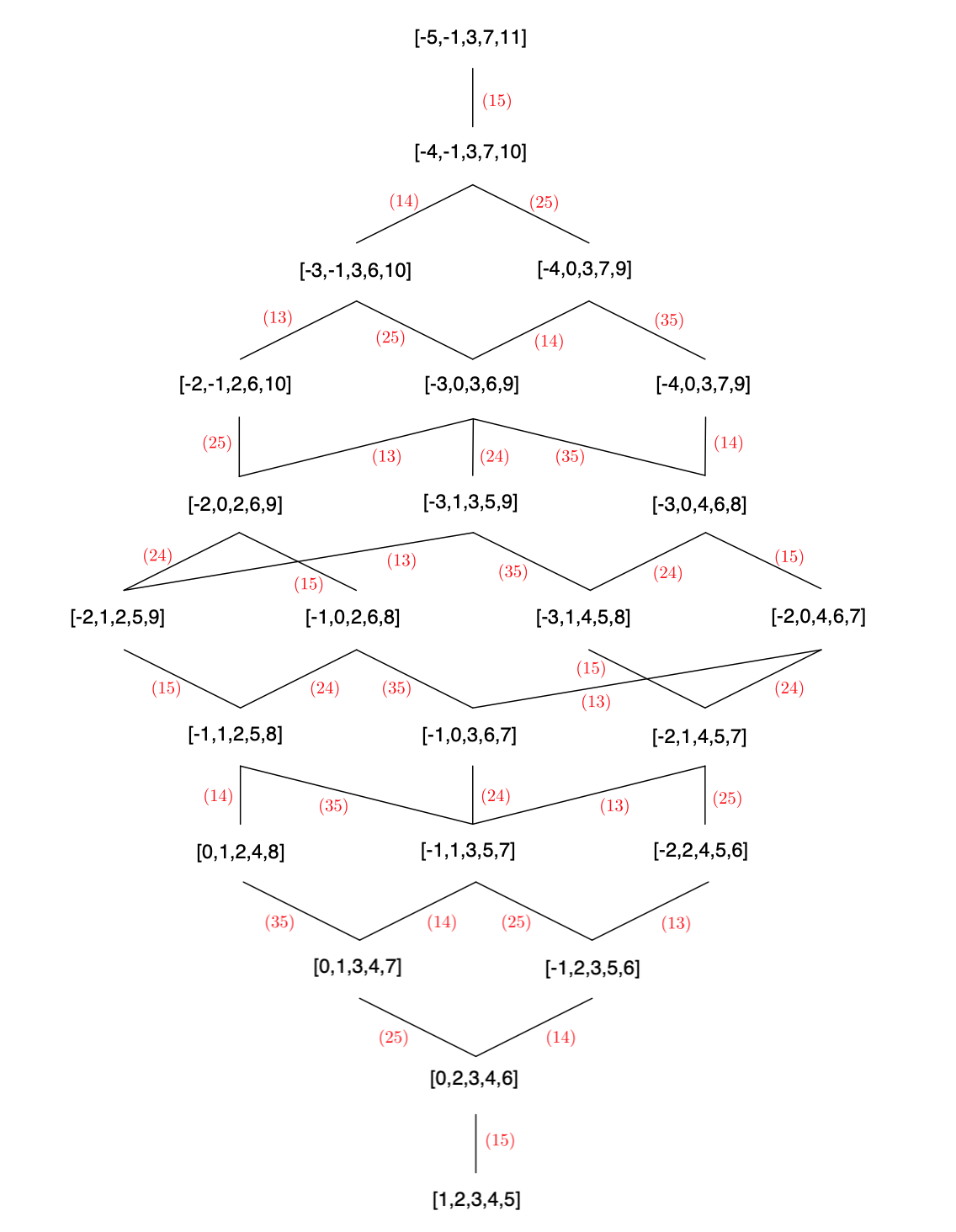}
\end{center}
\caption{The interval in the affine symmetric group $\tilde S_5$}\label{ASGIP}
\end{figure}
\vspace*{\stretch{1}}

\newpage

\begin{lemma}\label{Inv-ordre} Suppose that $f,g\in\tilde S_n$ with $f=[a_1,a_2,\ldots,a_n]$, $g=[b_1,b_2,\ldots,b_n]$, $a_1\leq\cdots \leq a_n$ and $b_1\leq\cdots\leq b_n$. Then:

(i) $Inv_p(f)$ is the set of pairs $(i,p+kn)$ with $1\leq p<i\leq n$ and $k=1,\ldots,\lfloor\frac{a_i-a_p}{n}\rfloor$.

(ii) $f\leq g$ if and only if for any $i<j$, $\lfloor\frac{a_j-a_i}{n}\rfloor\leq \lfloor\frac{b_j-b_i}{n}\rfloor$.
\end{lemma}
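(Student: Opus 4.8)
The plan is to analyze the position inversion set $Inv_p(f)$ directly from the window $[a_1,\ldots,a_n]$, using the periodicity $f(x+n)=f(x)+n$ to reduce everything to comparisons within the window. First I would prove (i). Fix $1\leq p<i\leq n$. An inversion $(i,j)$ with $i<j$ and $f(i)>f(j)$ must have, after writing $j=q+kn$ with $1\leq q\leq n$ and $k\geq 0$, the value $f(j)=a_q+kn$. Since $a_1\leq\cdots\leq a_n$, the inversion condition $a_i>a_q+kn$ together with $i<j=q+kn$ forces $q<i$ (so that $a_q\leq a_i$ leaves room) and $k\geq 1$; conversely, for each $q=p<i$, the integers $k$ for which $a_i>a_p+kn$ holds are exactly $k=1,\ldots,\lfloor\frac{a_i-a_p}{n}\rfloor$. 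I would carry out this bookkeeping carefully, checking that $i<p+kn$ is automatic when $k\geq 1$ and $p<i\leq n$, and that the case $q\geq i$ contributes nothing because then $a_q\geq a_i$ and $kn\geq 0$ kills the inequality. This establishes the stated description of $Inv_p(f)$.

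Next I would deduce (ii) from (i) together with Lemma \ref{inv}(ii), which says $f\leq g$ iff $Inv_p(f)\subset Inv_p(g)$. From (i), the inversions of $f$ indexed by a fixed pair of residues $(p,i)$ with $p<i$ are precisely $(i,p+kn)$ for $k=1,\ldots,\lfloor\frac{a_i-a_p}{n}\rfloor$, and similarly for $g$ with $\lfloor\frac{b_i-b_p}{n}\rfloor$. Since these families are nested for distinct residue pairs (each pair $(p,i)$ contributes its own disjoint block of inversions, the block being an initial segment $k=1,2,\ldots$), the containment $Inv_p(f)\subset Inv_p(g)$ holds if and only if for every such pair the count for $f$ is at most the count for $g$, i.e. $\lfloor\frac{a_i-a_p}{n}\rfloor\leq\lfloor\frac{b_i-b_p}{n}\rfloor$. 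Renaming $p,i$ as $i,j$ gives exactly the statement.

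The main obstacle I expect is the careful verification that the inversion blocks for different residue pairs $(p,i)$ are genuinely disjoint, so that containment of the whole sets reduces to a coordinatewise comparison of the floor quantities. Concretely, I must check that a single inversion $(i,j)$ determines the residue pair $(p,i)$ unambiguously, where $p$ is the residue of $j$ modulo $n$ in $\{1,\ldots,n\}$; this is immediate since $i\in\{1,\ldots,n\}$ and $p$ are both recovered from the pair. With disjointness in hand, the set inclusion is equivalent to inclusion block by block, and each block is an initial segment of consecutive values of $k$, so inclusion of two initial segments is just comparison of their lengths. I would present (i) as the substantive computation and (ii) as a short formal consequence, emphasizing that the monotonicity hypotheses $a_1\leq\cdots\leq a_n$ and $b_1\leq\cdots\leq b_n$ are what make the residue-pair decomposition clean.
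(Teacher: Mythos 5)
Your proof is correct and takes essentially the same route as the paper: decompose $j=q+kn$ with $1\leq q\leq n$, use the monotonicity $a_1\leq\cdots\leq a_n$ to rule out $q\geq i$ and force $k\geq 1$, count the admissible $k$, and then obtain (ii) from (i) via Lemma \ref{inv}(ii) by comparing, residue pair by residue pair, the initial segments of inversions. The only detail worth making explicit is that your strict-inequality count ($a_i>a_p+kn$ for exactly $k=1,\ldots,\lfloor\frac{a_i-a_p}{n}\rfloor$) uses that $a_i-a_p$ is never a multiple of $n$ (window entries are pairwise noncongruent modulo $n$), a point the paper sidesteps by invoking the equivalent $\geq$ form of the definition of $Inv_p$.
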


Note that assertion (ii) is a particular case of  Theorem 16 (3) in \cite{EE}.

\begin{proof} (i) A pair as in the statement is an inversion, since $i<p+kn$ and $f(i)=a_i\geq f(p+kn)=f(p)+kn=a_p+kn$ because 
$k\leq  \lfloor\frac{a_i-a_p}{n}\rfloor\leq\frac{a_i-a_p}{n}$. Conversely, let $(i,j)$ be an inversion of $f$. Then $1\leq i\leq n$. Since $j > i \geq 1$, we may write $j=p+kn,1\leq p\leq 
n,k\geq 0$. We have $f(i)>f(j)=f(p)+kn$. We cannot have $p \geq i$, since otherwise we would have, by the increasing 
property, $f(i) \leq f(p) \leq f(j)$, a contradiction. Hence $p < i$ and since $p+kn=j>  i$, we must have $k \geq 1$. 
Moreover, $kn < f(i)-f(p)=a_i-a_p$, hence $k \leq\lfloor \frac{a_i-a_p}{n} \rfloor$.

(ii) follows from (i).
\end{proof}

\begin{lemma}\label{cover2} Consider two admitted vectors $v,v'$ and $1\leq i<j\leq n$ such that $v'_{ij}=v_{ij}+1$ 
whereas $v'_{rs}=v_{rs}$ for the other 
coordinates. Let $k=a_j\mod n$, where $a_j$ is the $j$-th coordinate of $F(v)$. Then in $\tilde S_n$, one has the equality $F(v')=s_kF(v)$ and $\ell(F(v'))=\ell(F(v))
+1$.
\end{lemma}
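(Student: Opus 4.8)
The plan is to compute $s_kF(v)$ explicitly through Lemma \ref{product}, recognize the outcome as $F(v')$, and then read off the length from the description of position inversions. First I would record the effect of raising one coordinate: from $F(v)=[1,2,\dots,n]+\sum_{r<s}v_{rs}(e_s-e_r)$, passing from $v$ to $v'$ changes $F$ by $e_j-e_i$, so writing $F(v)=[a_1,\dots,a_n]$ the window $F(v')$ is obtained by replacing $a_j$ by $a_j+1$ and $a_i$ by $a_i-1$, all other entries unchanged.

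The decisive step is to pin down the residues modulo $n$. By definition $k\equiv a_j\pmod n$, so $j$ is the index whose entry is congruent to $k$; I must show that $i$ is the index whose entry is congruent to $k+1$, i.e.\ $a_i\equiv a_j+1\pmod n$. Here I would invoke Lemma \ref{ai-aj}, which gives $a_j-a_i=nv_{ij}+r$ with
$$r=j-i-\sum_{i<p<j}\delta_{ipj}(v)+\sum_{p<i}\delta_{pij}(v)+\sum_{j<p}\delta_{ijp}(v).$$
Since $v'$ is admitted and arises from $v$ by adding $1$ in coordinate $ij$, Lemma \ref{monter} forces equations (\ref{nathan}): the first sum vanishes, the second equals $i-1$, the third equals $n-j$, so $r=(j-i)+(i-1)+(n-j)=n-1$. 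Hence $a_j-a_i\equiv-1\pmod n$, that is $a_i\equiv k+1\pmod n$.

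Because the window entries of $F(v)$ are pairwise noncongruent modulo $n$ (Proposition \ref{ImF}), $i$ and $j$ are exactly the two indices selected in Lemma \ref{product} for this value of $k$. Applying that lemma yields $s_kF(v)=[b_1,\dots,b_n]$ with $b_j=a_j+1$, $b_i=a_i-1$ and all other entries unchanged, which is precisely the window of $F(v')$ described in the first paragraph; thus $F(v')=s_kF(v)$. For the length, both windows are strictly increasing (Proposition \ref{ImF}), so Lemma \ref{Inv-ordre}(i) combined with $\lfloor(a_t-a_s)/n\rfloor=v_{st}$ from Lemma \ref{ai-aj} gives $\ell(F(v))=\sum_{s<t}v_{st}$ and $\ell(F(v'))=\sum_{s<t}v'_{st}$; these totals differ by exactly one, so $\ell(F(v'))=\ell(F(v))+1$.

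I expect the residue computation $a_i\equiv k+1\pmod n$ to be the only real obstacle, everything else being formal bookkeeping; it is exactly here that the admissibility of $v'$, in the guise of equations (\ref{nathan}), is indispensable, for without it $r$ need not equal $n-1$ and $s_k$ would act on the wrong pair of entries. Alternatively the length claim could be obtained from Lemma \ref{inv}(i), which already forces a change of $\pm1$, together with the fact that increasing a coordinate moves upward in the poset; but the direct count above is cleaner and reproves that the length of $F(v)$ is the sum of the coordinates of $v$.
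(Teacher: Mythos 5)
Your proof is correct, and it splits cleanly into a part that coincides with the paper's and a part that genuinely differs. The first half --- computing $F(v')=F(v)+e_j-e_i$, using Lemma \ref{ai-aj} together with Lemma \ref{monter} (equations (\ref{nathan})) to get $a_j-a_i=nv_{ij}+n-1\equiv -1\pmod n$, hence $a_i\equiv k+1\pmod n$, and then invoking Lemma \ref{product} and the pairwise noncongruence of window entries --- is exactly the paper's argument for $F(v')=s_kF(v)$. Where you diverge is the length claim: the paper exhibits one explicit position inversion, namely $(j,i+n)$, asserted to lie in $Inv_p(F(v'))$ but not in $Inv_p(F(v))$, and concludes via Lemma \ref{inv}(i); you instead sum the description of $Inv_p$ in Lemma \ref{Inv-ordre}(i) against the identity $\lfloor (a_t-a_s)/n\rfloor=v_{st}$ of Lemma \ref{ai-aj} to obtain $\ell(F(u))=\sum_{s<t}u_{st}$ for every admitted $u$ (this legitimately applies to $v'$ as well, since $v'$ is admitted by hypothesis and Proposition \ref{ImF} gives the increasing window), whence the two lengths differ by exactly one. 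Your route buys two things. First, it proves the stronger, global statement that the length of $F(u)$ equals the coordinate sum of $u$, i.e.\ the rank, which is precisely the graded content of Theorem \ref{interval}. Second, it is insensitive to the size of $v_{ij}$: the paper's displayed computation silently drops the term $nv_{ij}$ and asserts $a_j=a_i+n-1$, which is literally true only when $v_{ij}=0$; in general $a_j-a_i=nv_{ij}+n-1$, the pair $(j,i+n)$ is then an inversion of \emph{both} permutations, and the inversion separating the two sets is $(j,i+(v_{ij}+1)n)$. Your counting argument never needs to locate that single inversion, so it quietly repairs this slip in the paper. Only your closing aside is shaky: deducing the sign of the length change from ``increasing a coordinate moves upward in the poset'' would be circular here, since the compatibility of the two orders is what Theorem \ref{interval} is establishing; but as you present it only as an alternative, the proof as written stands.
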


\begin{proof} Let $F(v)=[a_1,\ldots,a_n]$ and $F(v')=[a'_1,\ldots,a'_n]$. Then by Eq.(\ref{F}):

$$
a'_i=i+\sum\limits_{p<i}v'_{pi}-\sum\limits_{i<p}v'_{ip}=i+\sum\limits_{p<i}v_{pi}-\sum\limits_{i<p}v_{ip}-1=a_i-1,
$$
 and similarly $a'_j=a_j+1$, $a'_l=a_l$ if $l\neq i,j$. 

By Lemma \ref{ai-aj} and Lemma \ref{monter}, $a_j-a_i$ is equal to $j-i+(i-1)+(n-j)=n-1\equiv -1 \mod n$. Hence $a_i\equiv k+1\mod n$.

Let $s_kF(v)=[b_1,\ldots,b_n]$. By Lemma \ref{product}, we deduce that $b_i=a_i-1$ and $b_j=a_j+1$, whereas $b_l=a_l$ for the other $l$. Hence $b_l=a'_l$ for all $l$, that is, $F(v')=s_kF(v)$.

We have $a_j=a_i+n-1$. Hence 
$$
F(v)=[\ldots,a_i,\ldots,a_j,\ldots]=[\ldots,a_i,\ldots,a_i+n-1,\ldots],
$$ 
and
\begin{align*}
F(v') & =[\ldots,a'_i,\ldots,a'_j,\ldots]=[\ldots,a_i-1,\ldots,a_j+1,\ldots] \\
        & =[\ldots,a_i-1,\ldots,a_i+n,\ldots].
\end{align*}
 Thus $F(v')$ has the position inversion $(j,i+n)$ (because $F(v')$ send this pair 
onto $(a_i+n,a_i+n-1)$), but $F(v)$ has not this inversion (because the pair $(j,i+n)$ is sent by $F(v)$ onto $
(a_i+n-1,a_i+n)$). It follows from Lemma \ref{inv} (i) that the length of $F(v')$ is one more than the length of $F(v)$.
\end{proof}

\begin{lemma}\label{FG=id} Let $[a_1,\ldots,a_n]\in\tilde S_n$. Then
$$
a_i=i-\sum_{i<p}\lfloor\frac{a_p-a_i}{n}\rfloor+
\sum_{p<i}\lfloor\frac{a_i-a_p}{n}\rfloor.
$$
\end{lemma}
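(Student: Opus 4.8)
The plan is to verify the identity by a direct computation, using only the two defining properties of an element of $\tilde S_n$: that the window entries $a_1,\dots,a_n$ are pairwise noncongruent modulo $n$, and that $\sum_{i=1}^n a_i = n(n+1)/2$. No use of an ordering hypothesis on the $a_i$ is needed, so the lemma will hold for an arbitrary window.

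First I would clear the floor functions by introducing remainders. For $p\neq i$ the noncongruence property gives $a_i-a_p\not\equiv 0\bmod n$, so setting $r_{ip}:=(a_i-a_p)\bmod n\in\{1,\dots,n-1\}$ we have $\lfloor(a_i-a_p)/n\rfloor=((a_i-a_p)-r_{ip})/n$, and symmetrically for the terms with $p>i$. The key elementary remark is complementarity: since $r_{ip}\neq0$, one has $r_{pi}=(a_p-a_i)\bmod n=n-r_{ip}$. Substituting these into the right-hand side and combining the $p<i$ and $p>i$ ranges in the $a$-terms, the right-hand side minus $i$ becomes
\[
\frac1n\sum_{p\neq i}(a_i-a_p)\;-\;\frac1n\sum_{p<i}r_{ip}\;+\;\frac1n\sum_{p>i}r_{pi}.
\]

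The two parts are then evaluated separately. For the $a$-part, $\sum_{p\neq i}(a_i-a_p)=(n-1)a_i-\bigl(\tfrac{n(n+1)}{2}-a_i\bigr)=na_i-\tfrac{n(n+1)}{2}$, using the prescribed window sum. For the remainder part — the step I expect to require the most care — the decisive point is that as $p$ ranges over $\{1,\dots,n\}\setminus\{i\}$, the residues $r_{ip}=(a_i-a_p)\bmod n$ run through $\{1,2,\dots,n-1\}$ exactly once, because the $a_p\bmod n$ form a complete residue system; hence $\sum_{p\neq i}r_{ip}=\tfrac{n(n-1)}{2}$. Using complementarity to rewrite $\sum_{p>i}r_{pi}=(n-i)n-\sum_{p>i}r_{ip}$, the remainder part collapses to $(n-i)n-\tfrac{n(n-1)}{2}$. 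Adding the two parts and dividing by $n$, every occurrence of $n$ cancels and the expression simplifies to $a_i-i$, whence the right-hand side equals $a_i$, as claimed.

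The only genuine obstacle is the remainder bookkeeping: correctly pairing $r_{pi}=n-r_{ip}$ and recognizing the complete-residue-system identity $\sum_{p\neq i}r_{ip}=\tfrac{n(n-1)}{2}$. Once these are in place the remaining arithmetic is entirely routine.
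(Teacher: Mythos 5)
Your proof is correct, and it takes a genuinely different route from the paper's. The paper proves the identity by invoking the decomposition $a_i=\sigma(i)+nb_i$ with $\sigma\in S_n$ and $b_1+\cdots+b_n=0$ (the semidirect-product structure of $\tilde S_n$), reducing each floor via $\lfloor\frac{nb+s}{n}\rfloor=b+\lfloor\frac{s}{n}\rfloor$ to a sign term $0$ or $-1$, and then closing the computation with a separate inversion-counting claim, namely $\sum_{i<p,\,\sigma(p)<\sigma(i)}1=\sigma(i)-i+\sum_{p<i,\,\sigma(i)<\sigma(p)}1$. You instead stay entirely at the level of the window: you clear the floors with exact remainders $r_{ip}=(a_i-a_p)\bmod n\in\{1,\dots,n-1\}$, and your two key facts are the complementarity $r_{pi}=n-r_{ip}$ and the complete-residue-system identity $\sum_{p\neq i}r_{ip}=\frac{n(n-1)}{2}$, both immediate consequences of the pairwise-noncongruence property of a window; the window sum $\frac{n(n+1)}{2}$ then finishes the arithmetic exactly as you indicate (I checked: the remainder part collapses to $n(n-i)-\frac{n(n-1)}{2}$, and the total gives $a_i-i$). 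What your approach buys is self-containedness and economy: it needs neither the splitting into permutation and translation parts nor the auxiliary inversion count, and it makes transparent that the lemma uses nothing beyond the two defining properties of a window. What the paper's approach buys is explicit contact with the underlying permutation $\sigma$, which is the same object as the projection $\tilde S_n\to S_n$ used just after Theorem \ref{interval}; but as a proof of this lemma alone, yours is the more elementary argument, and equally general, since neither proof assumes the $a_i$ are increasing.
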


This formula could be proved as a corollary of \cite{Sh} Theorem 4.1, but we give a direct proof.

\begin{proof} By definition of $\tilde S_n$, there exists a permutation $\sigma\in S_n$ and integers $b_1,\ldots,b_n$ 
such that $a_i=\sigma(i)+nb_i$ and $b_1+\cdots+b_n=0$. One has 
\begin{align*}
A & := i-\sum_{i<p}\lfloor\frac{a_p-a_i}{n}\rfloor+\sum_{p<i}\lfloor\frac{a_i-a_p}{n}\rfloor \\
   & = i-\sum_{i<p}\lfloor\frac{\sigma(p)+nb_p-\sigma(i)-nb_i}{n}\rfloor+
\sum_{p<i}\lfloor\frac{\sigma(i)+nb_i-\sigma(p)-nb_p}{n}\rfloor.
\end{align*}

Note that for integers $b,s$, $\lfloor \frac{nb+s}{n} \rfloor=b+\lfloor \frac{s}{n} \rfloor$, that $-1<\frac{\sigma(j)-\sigma(k)}{n}<1$, and that $\sigma(j)-\sigma(k)$ has integer part $0$ or $-1$ depending on $\sigma(j)-\sigma(k)\geq 0$ or $<0$. Thus
$$A=i-\sum_{i<p}(b_p-b_i)+\sum_{p<i}(b_i-b_p)-\sum_{i<p,\sigma(p)<\sigma(i)}(-1)+\sum_{p<i,\sigma(i)<\sigma(p)}(-1).
$$
We claim that $\sum\limits_{i<p,\sigma(p)<\sigma(i)}1=\sigma(i)-i+\sum\limits_{p<i,\sigma(i)<\sigma(p)}1$. It follows that
$$
A=i+(n-1)b_i-\sum_{p\neq i}b_p+\sigma(i)-i=\sigma(i)+nb_i=a_i.
$$
because $-\sum\limits_{p\neq i}b_p=b_i$. 

In order to prove the claim, note that since $\sigma$ is a permutation, one has for fixed $i$: 
$$
\sum\limits_{i<p,\sigma(p)<\sigma(i)}1+\sum\limits_{i>p,\sigma(p)<\sigma(i)}1=\sum\limits_{\sigma(p)<\sigma(i)}1=\sigma(i)-1,$$ 
and
 $$\sum\limits_{i>p,\sigma(p)<\sigma(i)}1+\sum\limits_{i>p,\sigma(p)>\sigma(i)}1=\sum\limits_{i>p}1=i-1.
 $$
  Hence 
$$
\sum\limits_{i<p,\sigma(p)<\sigma(i)}1=\sigma(i)-1-\sum\limits_{i>p,\sigma(p)<\sigma(i)}1=\sigma(i)-1-(i-1-\sum\limits_{i>p,
\sigma(p)>\sigma(i)}1).
$$
\end{proof}

We prove now Theorem \ref{interval}. The mapping $G=F^{-1}$, defined in the proof, appears implicitly in \cite{EE} p.20 and Eq. (4.2.1) p. 375 in \cite{S}.

\begin{proof}[Proof of Theorem \ref{interval}]
1. It is easy to see that $F$ sends the null admitted vector onto $\id=[1,2,\ldots,n]$. We show that $F(w)=f_c$, where $w$ is the largest admitted vector, so that $w_{ij}=j-i-1$ (Corollary \ref{largest}). Let 
$F(w)=[a_1,\ldots,a_n]$. Then 
\begin{align*}
a_1=1-\sum\limits_{1<p\leq n}w_{1p} & = 1-\sum\limits_{1<p\leq n}(p-2)  = 1- \frac{(n-1)(n-2)}{2} \\
													  & = \frac{2-n^2+3n-2}{2} \\
													  & =- \frac{n(n-3)}{2}.
\end{align*}

Moreover, by Lemma \ref{ai-aj}, noting that 
$$
a_{i+1}-a_i=1+\sum\limits_{p<i}\delta_{p,i,i+1}(w)+\sum\limits_{i+1<p}\delta_{i,i+1,p}(w)~\text{~~and~~}~v_{i,i+1}=0,
$$
we have by Corollary \ref{largest} that it is equal to $1+(i-1)+(n-i-1)=n-1$. It follows that $a_i=c_i$, and $F(w)=f_c$.

2. Let $v$ be an admitted vector and $F(v)=f$. We have $v_{ij}\leq w_{ij}$, so that by Proposition \ref{ImF}, Lemma \ref{Inv-ordre} (ii) and Lemma \ref{ai-aj}, one 
has $f\leq f_c$. By the same results, $\id\leq f$. Thus $f\in[\id,f_c]$.

3. Let  $0\leq i<j\leq n$. The injectivity of $F$ follows from the equality $v_{ij}=\lfloor\frac{a_j-a_i}{n}\rfloor$, see Lemma \ref{ai-aj}.

4. Given $f\in[\id,f_c]$, define $G(f)=v\in \mathbb Z^T$ by $v_{ij}=\lfloor\frac{a_j-a_i}{n}\rfloor$.
The previous paragraph shows that $G\circ F=\id$. 

We show that the image of $G$ is contained in the set of admitted vectors. For this note that for an integer $x$, $\lfloor x/n\rfloor$ is the 
quotient of the Euclidean division of $x$ by $n$; therefore $\lfloor (x+y)/n\rfloor=\lfloor x/n\rfloor+\lfloor y/n\rfloor+\epsilon$, with $
\epsilon=0$ or $1$. It follows that if $i<j<k$, $\lfloor \frac{a_k-a_i}{n}\rfloor-\lfloor \frac{a_k-a_j}{n}\rfloor-\lfloor \frac{a_j-a_i}{n}
\rfloor$ is equal to 0 or 1, and $v_{ij}$ is thus an admitted vector.

Observe that $f =[a_1,\ldots,a_n]\in\tilde S_n$ has no inversion of the form $(i,j)$ with $1\leq i<j\leq 
n$ if and only if $a_1<\ldots <a_n$. This is the case in particular for $f=f_c$. Therefore each $f\in[\id,f_c]$ satisfies this increasing condition, because $f\leq f_c$ implies that by Lemma \ref{inv}, each inversion of $f$ is an inversion of $f_c$. The
nonnegativity of the coefficients of $G(f)$ follows.

5. The equality $F\circ G=\id$ follows from Lemma \ref{FG=id}.

6. It remains to show that $F$ and $G$ are increasing functions. That $F$ is increasing follows from Lemma \ref{cover2} and the description in Lemma \ref{cover-admitted} of the covering relation for admitted vectors.

Suppose now that $f,g\in [\id,f_c]$ and $f\leq g$. Then $f,g$ satisfy the hypothesis of Lemma \ref{Inv-ordre} and it 
follows by (ii) in this lemma that $G(f)\leq G(g)$.
\end{proof}

By composing the bijections, we obtain a direct poset isomorphism from the interval $[\id,f_c]$ into the set of circular permutations. Recall that $\tilde S_n$ is the semidirect product of $S_n$ and of the additive group $\{(k_1,\ldots,k_n)\in\mathbb Z^n, \sum_ik_i=0\}$. The canonical projection $\tilde S_n\to S_n$ is defined by $f\mapsto \sigma$, with $\sigma(i)=f(i) \MOD n$, where $x \MOD n$ is the unique representative modulo $n$ of $x$ in $\{1,\ldots,n\}$.

\begin{corollary} The poset isomorphism $V^{-1}\circ F^{-1}$ from  $[\id,f_c]$ into the set of circular permutations in $S_n$ is defined by $f\mapsto (\sigma)$, where $\sigma$ is as above.
\end{corollary}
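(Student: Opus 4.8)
The plan is to prove the corollary by composing the two explicit inverse maps already established and checking that the composite simply reads off the window modulo $n$. Fix $f=[a_1,\dots,a_n]\in[\id,f_c]$. By Proposition \ref{ImF} we have $a_1<\cdots<a_n$, and by the proof of Theorem \ref{interval} the admitted vector $v:=F^{-1}(f)$ is given by $v_{ij}=\lfloor (a_j-a_i)/n\rfloor$. By Theorem \ref{iso}, the circular permutation $V^{-1}(v)$ is $(w)$, where $w=1w'$ is the unique representative beginning with $1$ and $\gamma_{ij}(w)=\delta_{1ij}(v)$ for all $1\le i<j\le n$. Thus it suffices to identify the $n$-cycle $(w)$ with the cycle $(\sigma)$ coming from the residue word $\sigma(1)\cdots\sigma(n)$, $\sigma(i)=a_i\MOD n$.

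The first concrete step is to compute $\delta_{1ij}(v)$ directly from the floor formula, since $v_{1j}=\lfloor(a_j-a_1)/n\rfloor$, $v_{1i}=\lfloor(a_i-a_1)/n\rfloor$ and $v_{ij}=\lfloor(a_j-a_i)/n\rfloor$. Writing $a_i-a_1$ and $a_j-a_i$ through their quotients and remainders modulo $n$, the quotient of their sum $a_j-a_1$ exceeds the sum of the quotients by exactly the carry, so that
\[
\gamma_{ij}(w)=\delta_{1ij}(v)=\Big\lfloor\tfrac{a_j-a_1}{n}\Big\rfloor-\Big\lfloor\tfrac{a_i-a_1}{n}\Big\rfloor-\Big\lfloor\tfrac{a_j-a_i}{n}\Big\rfloor
\]
equals $1$ precisely when the remainders of $a_i-a_1$ and $a_j-a_i$ modulo $n$ sum to at least $n$. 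This converts the entire inversion-by-value sequence $(\gamma_{ij}(w))_{i<j}$ into a purely arithmetic condition on the residues $a_i\MOD n$; by Lemma \ref{ai-aj} (recording $a_j-a_i=nv_{ij}+r$ with $r\in\{1,\dots,n-1\}$) this condition is governed exactly by the relative cyclic order of these residues.

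The heart of the argument, and the step I expect to be the main obstacle, is to translate this carry condition back into the combinatorial statement that $w$ represents $(\sigma)$. The plan is to show that the sequence $(\gamma_{ij}(w))_{i<j}$ computed above coincides with the inversion-by-value sequence of the residue word $\sigma(1)\cdots\sigma(n)$, and then invoke the fact (used right after Theorem \ref{iso}) that a permutation is completely determined by its sequence of inversions. The delicate point is precisely the value-versus-position bookkeeping that distinguishes the functions $\gamma$ (inversions by value) from the position-inversion sets $Inv_p$ of Section \ref{-interval}: this is where the passage between the arithmetic of the window and the cyclic word must be carried out with care, and once that dictionary is pinned down the desired equality $V^{-1}\circ F^{-1}(f)=(\sigma)$ follows.

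As a guiding check, the two extremes already behave correctly: the null vector gives $\id=[1,\dots,n]$, hence $\sigma=(12\cdots n)$ and the minimal circular permutation; while $f_c$ gives, via Corollary \ref{largest}, a window whose residues decrease by $1$ modulo $n$, so that $\sigma$ reads as $(n\cdots 21)$, matching the maximal circular permutation of Corollary \ref{order1}. These endpoints confirm the normalization of the composite, and the general case is obtained by the inversion-data comparison described above.
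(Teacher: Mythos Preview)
Your approach diverges substantially from the paper's, and leaves the decisive step open. The paper's argument is a one-line citation: the defining formula of $F$ in Eq.~(\ref{F}), namely $a_j=j+\sum_{p<j}v_{pj}-\sum_{j<p}v_{jp}$, coincides modulo $n$ with the expression obtained in the proof of Proposition~\ref{path} for the \emph{position} of $j$ in the word $\alpha$ (that is, for $\alpha^{-1}(j)$), where $(\alpha)$ is the circular permutation attached to the admitted vector $v$. The corollary is read off from that coincidence; no carry analysis or inversion-sequence comparison is needed.

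Your plan, by contrast, computes $\delta_{1ij}(v)$ as a carry and then proposes to match $(\gamma_{ij}(w))_{i<j}$ with the inversion-by-value sequence of the residue word $\sigma(1)\cdots\sigma(n)$, concluding via ``a permutation is determined by its inversions''. This cannot work as stated: $w=1w'$ begins with $1$, while $\sigma$ generally does not, so their inversion sequences cannot agree. What you would actually need is that $\sigma$ is a \emph{cyclic conjugate} of $w$, and for that the relevant tool is not the determination-by-inversions fact but the conjugation invariance of $V_0$ (Proposition~\ref{V-0}). You flag this as ``delicate'' but do not resolve it, so the proposal remains a sketch with the crux missing.

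A further caution: a direct check with $n=4$ and the admitted vector $v$ given by $v_{13}=v_{14}=1$ (others $0$) yields $f=[-1,2,4,5]$, hence $\sigma=3241$ and $(\sigma)=(1,3,2,4)$, whereas $V^{-1}(v)=(1,3,4,2)$. This is consistent with the paper's own route, which gives $\sigma(j)=\alpha^{-1}(j)$ and hence identifies the circular permutation with $(\sigma^{-1})$ rather than $(\sigma)$. Your endpoint checks do not detect this, since at both extremes $\sigma$ is an involution; any completion of your argument must track this inverse carefully.
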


\begin{proof} This follows from the definition of $G$ and of the proof of Proposition \ref{path} at the end of Section \ref{vectors}.
\end{proof}

In a Coxeter group, an interval $I$ is always a lattice. Indeed it is well known that the right weak order is a complete meet-semilattice (see \cite{BB} Section 3.2). Therefore each pair of elements $x,y \in I$ has an infimum. Since $I$ admits a upper bound it turns out that each pair of elements $x,y \in I$ has also a supremum.  Further, it is known that such an interval is a semidistributive lattice (see \cite{R} Theorem 8.1 and the comment after it). Consequently, the poset considered in the present article has the following property.

\begin{corollary}\label{semi-distrib} The poset is a semidistibutive lattice.
\end{corollary}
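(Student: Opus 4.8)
The plan is to transport the problem, via the isomorphism of Theorem \ref{interval}, into the affine symmetric group, where semidistributivity of weak-order intervals is available off the shelf. Since both semidistributivity and the lattice property are invariants of poset isomorphism, and the compositions $V$ and $F$ from Theorems \ref{iso} and \ref{interval} are poset isomorphisms, it suffices to prove that the interval $[\id, f_c]\subseteq \tilde S_n$, equipped with the left weak order, is a semidistributive lattice.

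First I would record that this interval is a lattice. The left weak order on any Coxeter group is a complete meet-semilattice (\cite{BB}, Section 3.2), so every pair $x,y\in[\id,f_c]$ has a meet, which necessarily lies in the interval. Because the interval has a greatest element $f_c$, the set of common upper bounds of $x$ and $y$ inside the interval is nonempty, and applying the meet-semilattice property to that set produces a join. Hence $[\id,f_c]$ is a lattice; note that this reproves Theorem \ref{lattice} through the affine incarnation.

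Next, for semidistributivity I would invoke the structural result of Reading (\cite{R}, Theorem 8.1 and the remark following it) that every interval of the weak order on a Coxeter group is a semidistributive lattice; applied to $[\id,f_c]$ this gives the claim directly. Alternatively one may observe that the meet and join computed inside an interval of a semidistributive lattice coincide with those of the ambient lattice, so that semidistributivity is inherited by intervals; but since the full weak order on the infinite group $\tilde S_n$ is not itself a lattice, the formulation stated directly for intervals is the cleanest.

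The only place requiring genuine care is matching conventions: the isomorphism of Theorem \ref{interval} uses the \emph{left} weak order, whereas \cite{BB} and \cite{R} often state results for the right weak order. I would dispatch this by noting that $w\mapsto w^{-1}$ is an order isomorphism between the left and right weak orders on $\tilde S_n$, carrying $[\id,f_c]$ to $[\id,f_c^{-1}]$; as both properties at issue are isomorphism invariants, the side of the convention is immaterial. This is the main (and essentially only) obstacle, and it is a minor one — the substantive work has already been carried out in establishing Theorem \ref{interval}, after which the corollary is immediate.
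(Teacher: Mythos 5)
Your proposal is correct and follows essentially the same route as the paper: transport via the isomorphism of Theorem \ref{interval}, observe that an interval in the weak order of a Coxeter group is a lattice because the weak order is a complete meet-semilattice (\cite{BB}, Section 3.2) and the interval has a top element, then invoke \cite{R} Theorem 8.1 (and the comment after it) for semidistributivity. The only difference is that you explicitly address the left/right weak order convention via $w\mapsto w^{-1}$, a point the paper passes over silently; this is a small added care, not a different argument.
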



We leave to the reader to verify that $f_c$ is an involution in $\tilde S_n$ if $n$ is odd, but not when $n$ is even.

Define the elements $s_{ijk}$ of $\tilde S_n$, where $1\leq i<j\leq n$ and $k\in \mathbb Z$ for any $x\in\{1,2,\ldots,n\}$ by
$$s_{ijk}(x)=\left\{
\begin{array}{cccc}
j-kn& \mbox{if} &x=i \\
i+kn& \mbox{if} &x=j \\
x & \mbox{if} &x\neq i,j.
\end{array}
\right.
$$

\begin{proposition} Let $v$ be an admitted vector and $f=F(v)$. Consider the upwards path, in the Hasse diagram of 
the poset of admitted vectors, from the null vector to $v$, with successive labels $(i_1,j_1),\ldots,(i_r,j_r)$, where $r$ is the rank of $v$ in the 
poset. For $p=1,\ldots,r$, let $k_p$ be the number of $q\leq p$ such that $(i_q,j_q)=(i_p,j_p)$. Then in $\tilde S_n$, 
one has the factorization
$$ f=s_{i_1j_1k_1}\circ\cdots \circ s_{i_rj_rk_r}.$$
\end{proposition}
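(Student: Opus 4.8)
The plan is to follow the chain of admitted vectors that underlies the given path, turn the covering description of Lemma \ref{cover2} into a factorization of $f$ into simple reflections, and then convert that into the desired factorization into the affine reflections $s_{ijk}$ by the standard ``reflections along a saturated chain'' device.

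First I would fix notation along the path. Let $v^{(0)}=0$ (the null vector) and $v^{(0)}<v^{(1)}<\cdots<v^{(r)}=v$ be the admitted vectors on the path, where $v^{(p)}$ is obtained from $v^{(p-1)}$ by raising the coordinate $(i_p,j_p)$ by $1$ (Corollary \ref{label}). Put $f_p:=F(v^{(p)})$, so $f_0=\id$ and $f_r=f$, and write $f_{p-1}=[a_1^{(p-1)},\ldots,a_n^{(p-1)}]$. By Lemma \ref{cover2} there is an index $\ell_p$ (namely $\ell_p\equiv a^{(p-1)}_{j_p}\bmod n$) with $f_p=s_{\ell_p}f_{p-1}$ and $\ell(f_p)=\ell(f_{p-1})+1$; that lemma also gives $a^{(p-1)}_{i_p}\equiv \ell_p+1\bmod n$.

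Second, I would pass to reflections. Set $t_p:=f_{p-1}^{-1}s_{\ell_p}f_{p-1}$, a reflection of $\tilde S_n$. From $f_p=s_{\ell_p}f_{p-1}$ one gets $f_p=f_{p-1}t_p$, and telescoping with $f_0=\id$ yields $f=f_r=t_1t_2\cdots t_r$. It therefore suffices to prove $t_p=s_{i_pj_pk_p}$ for each $p$. The heart of the argument is the window computation of $t_p$, and its key numerical input is the value of $a^{(p-1)}_{j_p}-a^{(p-1)}_{i_p}$. Since the edge $(i_p,j_p)$ raises that coordinate, $v^{(p)}$ is admitted, so the equations (\ref{nathan}) of Lemma \ref{monter} hold for $v^{(p-1)}$; moreover the label $(i_p,j_p)$ has occurred exactly $k_p-1$ times strictly before step $p$, whence $v^{(p-1)}_{i_pj_p}=k_p-1$. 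Feeding these facts into the formula of Lemma \ref{ai-aj} gives
$$
a^{(p-1)}_{j_p}-a^{(p-1)}_{i_p}=(j_p-i_p)+n(k_p-1)+(i_p-1)+(n-j_p)=nk_p-1.
$$
Using $s_{\ell_p}(a^{(p-1)}_{i_p})=a^{(p-1)}_{i_p}-1=a^{(p-1)}_{j_p}-nk_p$ and $s_{\ell_p}(a^{(p-1)}_{j_p})=a^{(p-1)}_{j_p}+1=a^{(p-1)}_{i_p}+nk_p$, then applying $f_{p-1}^{-1}$ together with $f_{p-1}^{-1}(x-mn)=f_{p-1}^{-1}(x)-mn$, I obtain $t_p(i_p)=j_p-k_pn$, $t_p(j_p)=i_p+k_pn$, and $t_p(x)=x$ for the remaining residues $x\in\{1,\ldots,n\}$ (these are untouched by $s_{\ell_p}$, since in a window only positions $j_p,i_p$ carry the residues $\ell_p,\ell_p+1$). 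This is exactly the window of $s_{i_pj_pk_p}$; as both elements lie in $\tilde S_n$ and agree on a complete residue system, they coincide, so $t_p=s_{i_pj_pk_p}$ and hence $f=s_{i_1j_1k_1}\circ\cdots\circ s_{i_rj_rk_r}$.

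I expect the main obstacle to be bookkeeping rather than ideas: correctly reading off from Lemma \ref{ai-aj} the three $\delta$-sums, which are pinned to $0,(i_p-1),(n-j_p)$ precisely by the admissibility constraints (\ref{nathan}), and, crucially, recognizing that the relevant coordinate of $v^{(p-1)}$ equals $k_p-1$ rather than $0$. It is this winding contribution $n(k_p-1)$ that upgrades the naive difference $n-1$ to $nk_p-1$ and thereby produces the correct shift $k_p$ in $s_{i_pj_pk_p}$. Once that identity is in hand, the reflection-along-a-chain reduction and the final comparison of windows are entirely routine.
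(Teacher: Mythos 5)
Your proof is correct; note that the paper itself leaves this proposition unproved (``The proof is left to the reader''), and your argument is exactly the intended completion, built from the paper's own tools: Corollary \ref{label} and Lemma \ref{cover2} to write $f$ as a telescoping product of conjugated simple reflections $t_p=f_{p-1}^{-1}s_{\ell_p}f_{p-1}$, and Lemmas \ref{monter} and \ref{ai-aj} to pin down the windows. The one nontrivial numerical point --- that $v^{(p-1)}_{i_pj_p}=k_p-1$, so $a^{(p-1)}_{j_p}-a^{(p-1)}_{i_p}=nk_p-1$ rather than just $\equiv -1 \bmod n$, which is what produces the shift $k_p$ in $s_{i_pj_pk_p}$ --- is handled correctly.
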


The proof is left to the reader.

Note that it follows from Theorem \ref{interval}, and from Corollary 3.2.8 in \cite{BB}, that the M\"obius function of the poset takes its values in $\{-1,0,1\}$ \label{mobius}.

\section{lines diagrams}\label{lines}


We define a \emph{lines diagram} $D$ as follows: we take a cylinder, and we put in clockwise order the numbers from 1 to $n$ at the top and in any order at the bottom. For any $1\leq i \leq n$, we trace a continuous oriented line that goes from $i$ at the top to $i$ at the bottom and doesn't cross itself.
We can represent this diagram on a 2-dimensional surface by either projecting it like in Figure \ref{LD1} or unrolling it like in Figure \ref{LD2}.

By turning clockwise around the cylinder, we can list the numbers at the bottom starting with $1$ and we obtain a word $w=1 i_1\cdots i_{n-1}$. We then say that a lines diagram $D$ represents the circular permutation $(w)$. For example, in Figures \ref{LD1} and \ref{LD2} below, we draw two lines diagrams representing the same permutation $(1,6,4,2,3,5)$.

\begin{figure}[h!]
\includegraphics[scale=0.65]{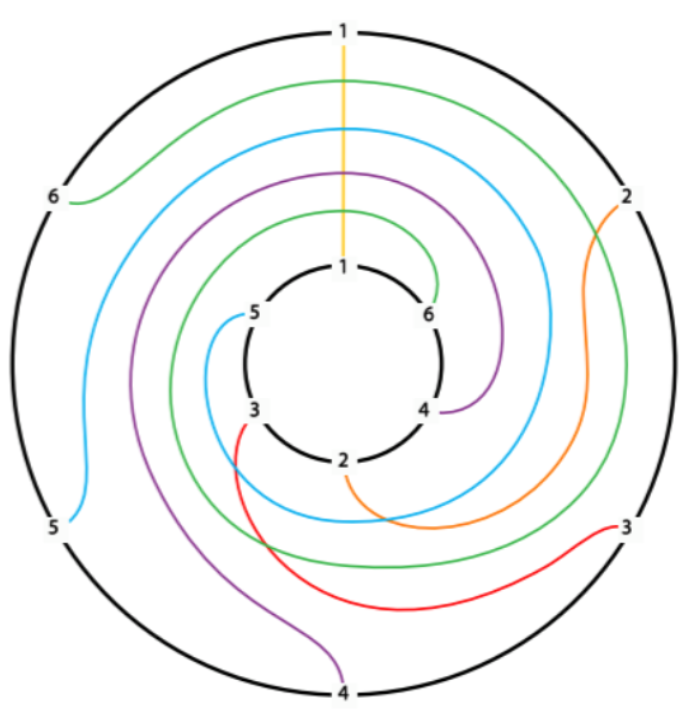} 
\caption{Circular lines diagram representing the circular permutation $(1,6 ,4 ,2, 3, 5)$}\label{LD1}
\end{figure}

\begin{figure}[h!]
\includegraphics[scale=0.7]{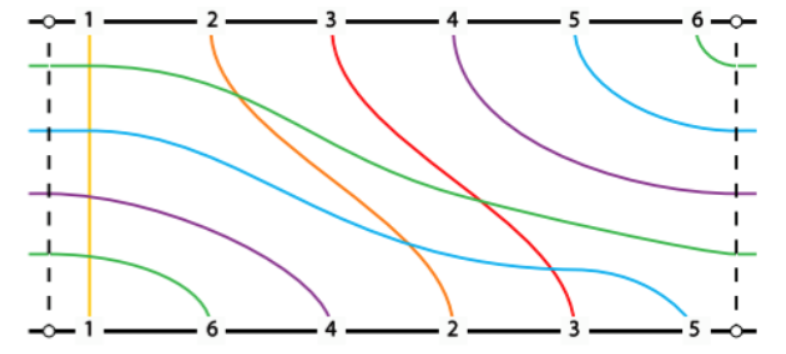} 
\caption{Cylindrical lines diagram representing the circular permutation $(1,6 ,4 ,2, 3, 5)$}\label{LD2}
\end{figure}

We denote by $L_i(D)$ the line of $D$ that goes from $i$ at the top to $i$ at the bottom. We define $L_{i,j}(D)$ to be number of times that $L_i(D)$ and $L_j(D)$ cross each other.

For $i<j$, a crossing between the lines $L_i(D)$ and $L_j(D)$ is called a \emph{legal crossing} if 
\begin{itemize}
	\item[i)] $i+1<j$,
	\item[ii)] $L_j(D)$ crosses $L_i(D)$ from the right side of $L_i(D)$ to the left side of $L_i(D)$ when one follows $L_i(D)$ from the top to the bottom of the cylinder. See figure \ref{LD3}.
\end{itemize}

We say that a lines diagram is \emph{legal} if all its crossings are legal. Note that a legal lines diagram must satisfy $L_{i,i+1}(D)=0$ for all $1\leq i<n$. The diagrams in Figures \ref{LD1} and \ref{LD2} are examples of legal diagrams.

\begin{figure}[h!]
\includegraphics[scale=0.51]{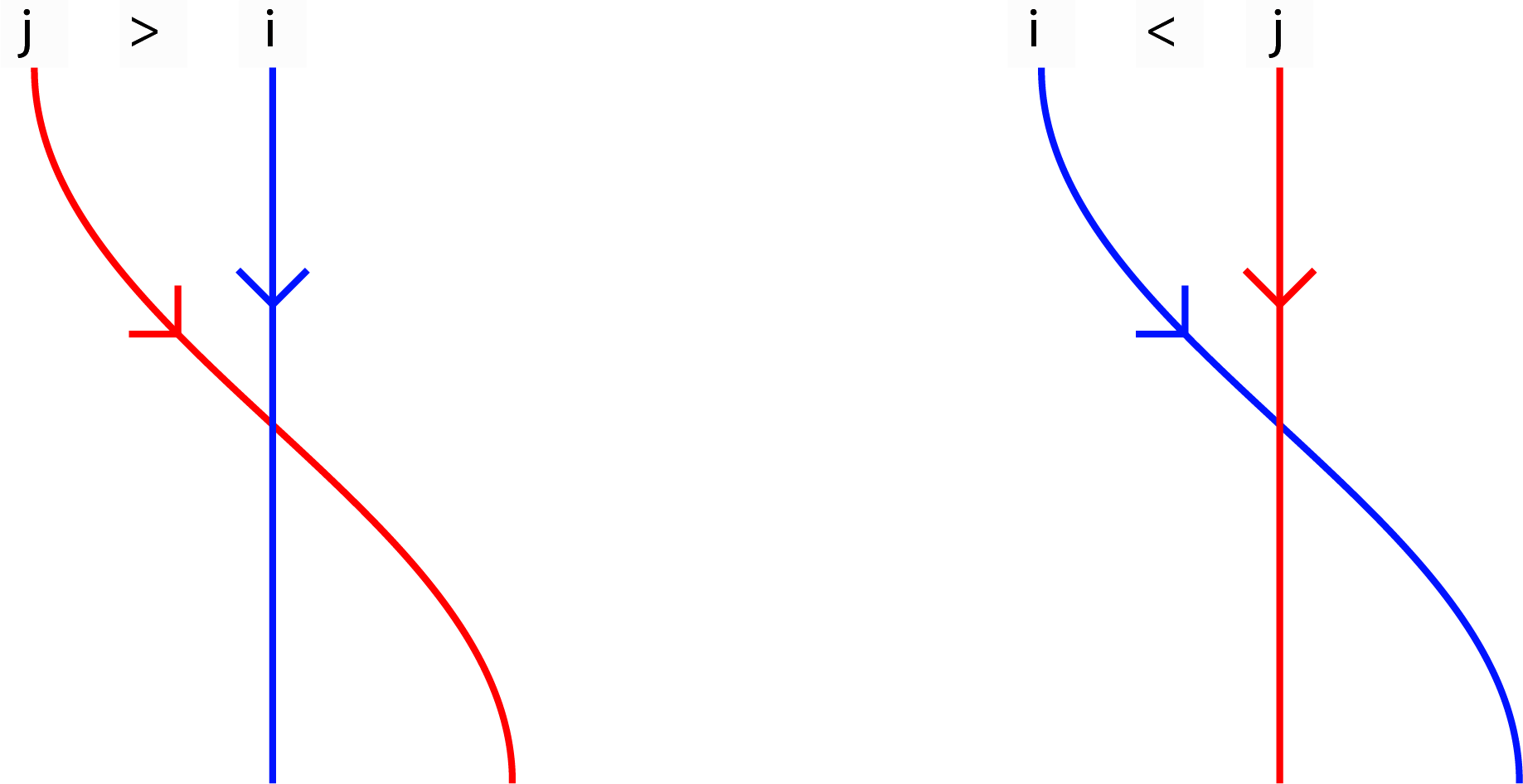} 
\caption{The left crossing is legal while the right one is illegal when $i<j$}\label{LD3}
\end{figure}

We call the \emph{trivial lines diagram of $n$}, the legal lines diagram with no crossing. The trivial lines diagram of $n$ represents the permutation $(1,2,...,n)$.

Consider a circular permutation $\sigma$; we build a legal lines diagram as follows: we take a strictly increasing path $P=((1,2,...,n)=\sigma_0,\sigma_1,...,\sigma_h=\sigma)$ in the poset of circular permutations that goes from the bottom of the poset to the permutation $\sigma$. Denote by $D_0$ the trivial lines diagram representing $\sigma_0$. Suppose we have build the legal lines diagrams $D_{k-1}$ representing $\sigma_{k-1}$, $k\geq 1$. We now build $D_k$. Denote by $(i,j)$ the large circular descent labelling the step from $\sigma_{k-1}$ to $\sigma_k$ in the poset. We build the diagram $D_k$ by interchanging the number $j$ and $i$ at the bottom of the cylinder and, therefore, forcing $L_j(D_k)$ to cross $L_i(D_k)$ right before reaching the bottom of the cylinder. We know that $(i,j)$ is a large circular descent of $\sigma_{k-1}$ and therefore the crossing that is created is legal. 

Remembering that, for a circular permutation $\sigma$, each ascending step in the poset is equivalent to exchange $i$ and $j$ for a large circular descent $(i,j)$, it is straightforward that the lines diagram $D_{\sigma}$ obtained with the algorithm above has the property that $L_{i,j}(D)=v_{i,j}$, where $v_{i,j}$ is a coordinate of the vector $v_{\sigma}$. We will later show that any legal lines diagram representing $\sigma$ has this property.

Considering two lines $L_i(D)$ and $L_j(D)$ of a lines diagram $D$ that are crossing each other, we define the \emph{triangle $i-j$ of $D$}, denoted $T_{i,j}(D)$, to be the triangle whose vertices are: the last intersection of $L_i(D)$ and $L_j(D)$ and the points $i$ and $j$ on the bottom of the cylinder.

\begin{lemma}\label{triangle}
	Let $D$ a legal lines diagram. If two lines are crossing each other, $D$ has a triangle that does not intersect any other line than its boundary.
\end{lemma}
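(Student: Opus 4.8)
The plan is to distinguish the \emph{lowest} crossing of the diagram and to show that the triangle attached to it is the desired empty one. First I would fix on the cylinder the obvious height function, decreasing from the top to the bottom, so that every crossing of two lines has a well-defined height; after a harmless general-position assumption (all crossings simple, at pairwise distinct heights) there is a unique crossing $P$ of minimal height, say of $L_i(D)$ and $L_j(D)$. Since $D$ is legal we have $i+1<j$, so $T_{i,j}(D)$ is defined; and as $P$ is the lowest crossing of the whole diagram it is in particular the last (lowest) intersection of $L_i(D)$ and $L_j(D)$, hence $P$ is exactly the apex of $T_{i,j}(D)$. The claim to establish is that $T_{i,j}(D)$ meets no line other than along its boundary.

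The key observation I would use is that \emph{below $P$ there is no crossing at all}. Consequently, reading the lines from the height just below $P$ down to the bottom, their cyclic left-to-right order never changes, since two lines can only swap order by crossing and no crossing occurs in this range. Immediately below $P$ the lines $L_i(D)$ and $L_j(D)$ are adjacent with no line between them, and by order preservation this persists to the bottom; thus the endpoints $i$ and $j$ are adjacent on the bottom circle and the base of $T_{i,j}(D)$ (the short bottom arc from $i$ to $j$) carries no other endpoint. In particular no line terminates in the interior of the base.

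It then remains to exclude that some other line $L_k(D)$ passes through the interior of the triangle. The boundary of $T_{i,j}(D)$ is a closed curve formed by the two segments of $L_i(D)$ and $L_j(D)$ running from $P$ down to $i$ and to $j$, together with the bottom arc. A line entering the interior must cross this boundary: it cannot cross the bottom arc (a line touches the bottom only at its own endpoint, and none of those lies on the base), nor pass through the apex $P$ (a simple crossing of $L_i(D)$ and $L_j(D)$ only); so it would cross one of the two lateral segments, i.e. meet $L_i(D)$ or $L_j(D)$ strictly below $P$. That is a crossing of height smaller than that of $P$, contradicting minimality. Hence $T_{i,j}(D)$ is empty.

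The only genuinely delicate point is the passage from the local picture at $P$ to the global one at the bottom: the assertion that the cyclic order of the lines is frozen below $P$, that $T_{i,j}(D)$ is simply connected with exactly the stated boundary, and that the correct arc is taken as its base. This is a Jordan-curve argument on the cylinder, and it is where I would be most careful, in particular to reduce a non-generic diagram (triple points, coincident heights) to the generic case by a perturbation that neither creates nor removes any crossing below the minimal one, so that the chosen lowest crossing and its empty triangle survive unchanged.
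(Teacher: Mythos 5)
Your proof is correct, but it takes a genuinely different route from the paper's. The paper argues by inclusion-minimality: it calls a triangle \emph{regular} if at least one of its two non-bottom edges crosses no line, notes that regular triangles exist (take the triangle whose apex is the last crossing along some line $L_i(D)$), orders regular triangles by inclusion of their surfaces, and takes a minimal element, asserting without further detail that it satisfies the lemma. You instead single out the globally lowest crossing $P$ and run a Jordan-curve argument: any other line starts outside $T_{i,j}(D)$, cannot enter through the apex or the bottom arc, hence would have to cross a lateral edge, giving a crossing strictly below $P$, a contradiction. Your extremal object is more concrete, your contradiction is immediate, and you actually supply the verification that the paper leaves to the reader; moreover your triangle has both lateral edges crossing-free, so it is a (minimal) regular triangle, i.e.\ a particular instance of the paper's object. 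What the paper's formulation buys in return is independence from the height function: the definition of a lines diagram only requires continuous, non-self-crossing lines, so a line may meet a horizontal circle several times; for a non-monotone line the lateral edge below $P$ can climb back above the height of $P$, and a crossing on it need not be ``lower'' than $P$ --- this monotonicity is the hidden hypothesis behind your frozen-cyclic-order claim and behind your final contradiction, and your perturbation remark (simple crossings, distinct heights) does not address it. The gap is minor and fixable: add to your normalization that each line is monotone in height (legality excludes bigons, so a legal diagram can be isotoped to such a position without changing its crossings), after which your argument is complete; the paper's notions (``last crossing along a line'', inclusion of surfaces) are intrinsic and avoid the issue altogether.
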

\begin{proof}
	Call \emph{regular} triangle a triangle such that at least one of its non-bottom edges does not cross any line. There always exist such triangles. Indeed, follow a line $L_i(D)$ and consider the triangle whose top vertex is the last crossing of $L_i(D)$.
	Order regular triangles by inclusion of their surfaces.
	Let $T$ be a minimal element of this poset. Then, $T$ satisfies the conclusion of this lemma.
%
%
%
\end{proof}

\begin{theorem}\label{untie}
	Let $D$ be a non-trivial legal lines diagram representing a circular permutation $\sigma$. There exists a large circular ascent $a_h$ in $\sigma$ such that we can create a legal diagram $D'$ that represent $(a_h a_{h+1})\sigma(a_h a_{h+1})$ where $L_{k,l}(D')=L_{k,l}(D)$ for any pair $k,l$ except for $a_h, a_{h+1}$ where $L_{a_h, a_{h+1}}(D')=L_{a_h, a_{h+1}}(D)-1$.
\end{theorem}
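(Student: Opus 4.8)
The plan is to untie the last crossing of a suitably chosen pair of lines, using the empty triangle furnished by Lemma \ref{triangle}, and to read off from it both the large circular ascent and the new diagram. First I would invoke Lemma \ref{triangle}: since $D$ is non-trivial, two of its lines cross, so $D$ contains a triangle meeting no other line except along its boundary. Such a triangle is necessarily of the form $T_{i,j}(D)$ for some crossing pair $i<j$: its apex is a crossing $c$ of two lines $L_i(D)$ and $L_j(D)$, and since the triangle is empty these two lines cannot cross again below $c$ (another crossing would lie inside the triangle), so $c$ is their last crossing and $L_i(D),L_j(D)$ run monotonically from $c$ to the base vertices, which are the bottom points $i$ and $j$. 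Because $D$ is legal, the crossing at $c$ is legal, whence $i+1<j$.

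Next I would show that $i$ and $j$ are consecutive among the bottom points. If some bottom point $m$ lay strictly between $i$ and $j$ on the base edge of $T$, then $L_m(D)$ would approach its endpoint from the interior side of $T$, i.e.\ it would enter the interior of $T$, contradicting that $T$ meets no other line. Hence in the cyclic word of $\sigma$ the letters $i$ and $j$ are adjacent. The direction of the legal crossing at $c$, prescribed by condition (ii), fixes their cyclic order as $\ldots\, i\, j\, \ldots$; equivalently, this order is forced by the fact that untying a crossing must descend in the poset. Writing $a_h=i$ and $a_{h+1}=j$ one then has $a_{h+1}=j>i+1=a_h+1$, so $a_h$ is a large circular ascent of $\sigma$ in the sense of Definition \ref{large circular permutation}.

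Finally I would carry out the untying. Since $T$ is an empty triangle, I can isotope $L_i(D)$ and $L_j(D)$ within a neighbourhood of $T$ so as to delete the crossing $c$; this exchanges the two base endpoints, turning the bottom order $\ldots\, i\, j\, \ldots$ into $\ldots\, j\, i\, \ldots$, and produces a diagram $D'$. Emptiness of $T$ guarantees that the modification is local and touches no other line, so $L_{a_h,a_{h+1}}(D')=L_{a_h,a_{h+1}}(D)-1$ while $L_{k,l}(D')=L_{k,l}(D)$ for every other pair; in particular every remaining crossing of $D'$ is an unchanged, hence legal, crossing of $D$, and no crossing between consecutive values is created, so $D'$ is legal. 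Swapping the two adjacent cyclic letters $i$ and $j$ conjugates the cycle by the transposition $(i\,j)=(a_h\,a_{h+1})$, each occurrence of $i$ becoming $j$ and conversely, so $D'$ represents $(a_h\,a_{h+1})\,\sigma\,(a_h\,a_{h+1})$, as required.

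The main obstacle is the geometric bookkeeping of the last step: one must verify that the legal crossing at $c$ can indeed be removed by a local isotopy inside the empty triangle without affecting or creating any other crossing, and confirm via condition (ii) that the resulting change of bottom order is exactly the one claimed. Once the emptiness of $T$ is fully exploited, the count $L_{a_h,a_{h+1}}(D')=L_{a_h,a_{h+1}}(D)-1$, the invariance of all other crossing numbers, and the legality of $D'$ all follow directly, and the identification of the represented permutation reduces to the elementary fact that transposing two adjacent letters of a cyclic word conjugates the corresponding cycle by that transposition.
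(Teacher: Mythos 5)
Your proof is correct and follows essentially the same route as the paper's: invoke Lemma \ref{triangle} to obtain an empty triangle $T_{i,j}(D)$, deduce that $i$ is a large circular ascent with $j$ immediately following it in $\sigma$, and untie the last crossing of $L_i(D)$ and $L_j(D)$ by exchanging $i$ and $j$ at the bottom of the cylinder. The paper's own proof is a much terser version of exactly this argument; the details you supply (the apex being the last crossing, adjacency of $i$ and $j$ at the bottom, preservation of legality, and the conjugation identity) are precisely the steps the paper leaves implicit.
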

\begin{proof}
	By lemma \ref{triangle} we know that there exists in $D$ a pair $i,j$ such $T_{i,j}(D)$ is not crossed by any line. Therefore, $i$ is a large circular ascent of $\sigma$, with $j$ following $i$. By interchanging $i$ and $j$ on the bottom of the cylinder, we can untie the last crossing between $L_i(D)$ and $L_j(D)$ and keep the rest of the diagram unchanged. This proves the result.
\end{proof}

\begin{corollary}\label{cor coeff}
	Let $\sigma$ be a circular permutation and $D$ a legal lines diagram representing $\sigma$. Considering $v_{\sigma}$ the vector associated to $\sigma$, $D$ has the property that $L_{i,j}(D)=v_{i,j}$ for any pair $i,j$.
\end{corollary}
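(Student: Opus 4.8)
The plan is to argue by induction on the total number of crossings $m=\sum_{i<j}L_{i,j}(D)$ of the legal lines diagram $D$. This quantity is a nonnegative integer, and Theorem \ref{untie} is precisely the tool that lets me decrease it by one while controlling how the collection $(L_{i,j})$ changes, so the induction should run cleanly.

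For the base case $m=0$, I would observe that a legal diagram with no crossing is the trivial lines diagram: since no two lines cross, the cyclic order of the endpoints at the bottom of the cylinder coincides with the cyclic order $1,2,\ldots,n$ at the top, so $D$ represents $\sigma=(1,2,\ldots,n)$. This is the smallest circular permutation (Corollary \ref{order1}), and under the isomorphism $V$ it corresponds to the null admitted vector (Corollary \ref{largest}). Hence $L_{i,j}(D)=0=v_{\sigma,ij}$ for every pair, as required.

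For the inductive step, assume $m\geq 1$, so that $D$ is non-trivial, and apply Theorem \ref{untie}: there is a large circular ascent $a_h=i$ of $\sigma$, with $a_{h+1}=j$ and $j>i+1$, and a legal diagram $D'$ representing $\sigma'=(i\,j)\,\sigma\,(i\,j)$ such that $L_{i,j}(D')=L_{i,j}(D)-1$ and $L_{k,l}(D')=L_{k,l}(D)$ for every other pair $(k,l)$. Thus $D'$ has $m-1$ crossings, and the induction hypothesis gives $L_{k,l}(D')=v_{\sigma',kl}$ for all pairs. It then remains to relate $v_{\sigma'}$ to $v_\sigma$. The key observation is that $\sigma$ covers $\sigma'$ in the poset of circular permutations: indeed $\sigma$ has the circular factor $ij$ with $j>i+1$, and swapping $i$ and $j$ produces the circular factor $ji$ of $\sigma'$, so $\sigma'\to\sigma$ is a covering relation whose label is the transposition $(i,j)$. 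Corollary \ref{label} then yields $v_{\sigma,ij}=v_{\sigma',ij}+1$ while $v_{\sigma,kl}=v_{\sigma',kl}$ for all other pairs, which is exactly the same change registered by $D\mapsto D'$.

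Combining these facts will finish the argument: for $(k,l)\neq(i,j)$ one gets $L_{k,l}(D)=L_{k,l}(D')=v_{\sigma',kl}=v_{\sigma,kl}$, and for the distinguished pair $L_{i,j}(D)=L_{i,j}(D')+1=v_{\sigma',ij}+1=v_{\sigma,ij}$. The only genuinely delicate points are handled upstream: Theorem \ref{untie} guarantees that one can always untie a last crossing while staying within legal diagrams and altering a single coordinate, and Corollary \ref{label} guarantees that this single-coordinate change matches a covering step. The main thing I expect to double-check is the directional matching, namely that untying a crossing moves us \emph{down} exactly one cover, consistently with $v_{\sigma,ij}=v_{\sigma',ij}+1$, together with the base-case claim that a crossing-free legal diagram can only represent the identity cycle.
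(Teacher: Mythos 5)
Your proof is correct and follows essentially the same route as the paper's (very terse) argument: the base case that a crossing-free legal diagram must represent $(1,2,\ldots,n)$ with null admitted vector, and then descent via Theorem \ref{untie}, matching each untying step against a covering relation in the poset as recorded by Corollary \ref{label}. Your version simply makes explicit the induction (on the number of crossings) and the single-coordinate bookkeeping that the paper leaves to the reader.
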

\begin{proof}
	It is easy to convince ourself that the only legal lines diagram that represents $(1,2,...,n)$ is the trivial lines diagram, which has no crossing.
	
	The rest follows from theorem \ref{untie} and the poset of circular permutations.
\end{proof}

In Figures \ref{LD1} and  \ref{LD2} above, we drew two lines diagrams for the same permutation; it illustrates the bijection $V$ from circular permutations towards admitted vectors. Figure \ref{LD1} uses two concentric circles, while Figure \ref{LD2} is drawn on a cylinder, represented by a rectangle where the two vertical sides are identified. 
Thus, because of Corollary \ref{cor coeff} we know the admitted vector $\lambda_{\sigma}$ associated to the circular permutation $\sigma$, see Figure \ref{adm vector sigma}.

\begin{figure}[h!]
\includegraphics[scale=0.5]{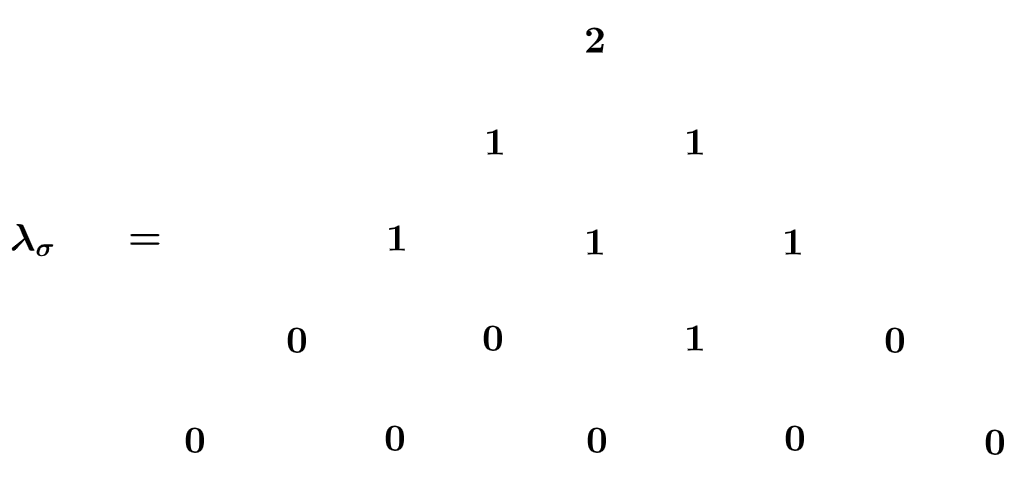} 
\caption{Admitted vector associated to the circular permutation $(1,6 ,4 ,2, 3, 5)$}
\label{adm vector sigma}
\end{figure}

{\em Acknowledgments}: The authors thank Nicolas England, Christophe Hohlweg, Hugh Thomas and Richard Stanley for useful discussions 
and mails. 

\nocite{*}
\bibliographystyle{plain}
\bibliography{ordering_circular_permutations.bib}

\end{document}